\newcommand{\Z}{\mathbb{Z}}
\newcommand{\Q}{\mathbb{Q}}
\newcommand{\R}{\mathbb{R}}
\newcommand{\C}{\mathbb{C}}
\newcommand{\F}{\mathbb{F}}
\newcommand{\bG}{\mathbb{G}}
\newcommand{\fS}{\mathfrak{S}}
\newcommand{\fp}{\mathfrak{p}}
\newcommand{\cG}{\mathcal{G}}
\newcommand{\cH}{\mathcal{H}}
\newcommand{\cO}{\mathcal{O}}
\newcommand{\cR}{\mathcal{R}}
\newcommand{\cT}{\mathcal{T}}
\newcommand{\cU}{\mathcal{U}}
\newcommand{\wtil}[1]{\widetilde{#1}}
\newcommand{\ol}[1]{\overline{#1}}
\DeclareMathOperator{\Gal}{Gal}
\DeclareMathOperator{\Ker}{Ker}
\DeclareMathOperator{\pd}{pd}
\DeclareMathOperator{\ram}{ram}
\DeclareMathOperator{\id}{id}
\DeclareMathOperator{\rank}{rank}
\DeclareSymbolFont{cyrletters}{OT2}{wncyr}{m}{n}
\DeclareMathSymbol{\Sha}{\mathalpha}{cyrletters}{"58}
\let\oldenumerate\enumerate
\renewcommand{\enumerate}{
   \oldenumerate
   \setlength{\itemsep}{1pt}
   \setlength{\parskip}{0pt}
   \setlength{\parsep}{0pt}
}
\let\olditemize\itemize
\renewcommand{\itemize}{
   \olditemize
   \setlength{\itemsep}{1pt}
   \setlength{\parskip}{0pt}
   \setlength{\parsep}{0pt}
}
\theoremstyle{plain}
\newtheorem{thm}{Theorem}[section]
\newtheorem{lem}[thm]{Lemma}
\newtheorem{prop}[thm]{Proposition}
\newtheorem{ass}[thm]{Assumption}
\theoremstyle{definition}
\newtheorem{defn}[thm]{Definition}
\newtheorem{rem}[thm]{Remark}
\newtheorem{eg}[thm]{Example}
\newcommand{\otimesL}{\otimes^{\mathsf{L}}}
\newcommand{\DP}{D^{\perf}}
\newcommand{\DPT}{D^{\perf}_{\tors}}
\newcommand{\DPTM}{D^{\perf}_{\mu=0}}
\DeclareMathOperator{\RG}{\mathsf{R}\Gamma}
\DeclareMathOperator{\Iw}{Iw}
\DeclareMathOperator{\perf}{perf}
\DeclareMathOperator{\tors}{tors}
\DeclareMathOperator{\Sel}{Sel}
\DeclareMathOperator{\Tor}{Tor}
\title[Kida's formula]{Kida's formula via Selmer complexes}
\author[T.~Kataoka]{Takenori Kataoka}
\address{Department of Mathematics, Faculty of Science Division II, Tokyo University of Science.
1-3 Kagurazaka, Shinjuku-ku, Tokyo 162-8601, Japan}
\email{tkataoka@rs.tus.ac.jp}
 \keywords{Iwasawa theory, Iwasawa invariants, Kida's formula, Selmer complexes}
 \subjclass[2020]{11R23}
\date{\today}
\begin{document}

\maketitle

%%%%%%%%%%%%%%%%%%%%%
\begin{abstract}
In Iwasawa theory, the $\lambda$, $\mu$-invariants of various arithmetic modules are fundamental invariants that measure the size of the modules.
Concerning the minus components of the unramified Iwasawa modules, Kida proved a formula that describes the behavior of those invariants with respect to field extensions.
Subsequently, many analogues of Kida's formula have been found in various areas in Iwasawa theory.
In this paper, we present a novel approach to such analogues of Kida's formula, based on the perspective of Selmer complexes.
\end{abstract}
%%%%%%%%%%%%%%%%%%%%%

%%%%%%%%%%%%%%%%%%%%%%
\section{Introduction}\label{sec:intro}
%%%%%%%%%%%%%%%%%%%%%%

Let $p$ be an odd prime number and $k_{\infty}/k$ a $\Z_p$-extension of a number field $k$.
One of the fundamental objects of study in Iwasawa theory is the unramified Iwasawa module $X(k_{\infty})$ for $k_{\infty}$, which is defined as the Galois group of the maximal unramified abelian pro-$p$ extension over $k_{\infty}$.
There are also many variants of Iwasawa modules defined by changing the ramification condition.
It is known that such Iwasawa modules are endowed with a (finitely generated) module structure over the Iwasawa algebra $\Lambda = \Z_p[[\Gal(k_{\infty}/k)]]$.
Furthermore, the scope of Iwasawa theory has been extended to other arithmetic situations, such as Selmer groups of elliptic curves and, more generally, of $p$-adic representations.

For a finitely generated $\Lambda$-module $M$, we have the $\lambda$, $\mu$-invariants $\lambda(M)$, $\mu(M)$.
They are defined via the structure theorem of modules over $\Lambda$ (we set $\lambda(M) = \mu(M) = + \infty$ if $M$ is not torsion).
The $\lambda$, $\mu$-invariants of arithmetic modules play important roles in Iwasawa theory.
For instance, the so-called Iwasawa class number formula describes the growth of the $p$-parts of the class numbers of intermediate number fields of $k_{\infty}/k$ by using those invariants associated to the unramified Iwasawa module $X(k_{\infty})$.

Kida \cite{Kid80} proved a formula that describes the behavior of those invariants when the $\Z_p$-extension varies.
To be concrete, let $K/k$ be a finite $p$-extension of CM-fields and consider the minus components $X(K_{\infty})^-$ and $X(k_{\infty})^-$ of the unramified Iwasawa modules for the cyclotomic $\Z_p$-extensions $K_{\infty}$ and $k_{\infty}$.
Then Kida's formula is of the form
\[
 \lambda(X(K_{\infty})^-) - \delta
 = [K_{\infty}: k_{\infty}] (\lambda(X(k_{\infty})^-) - \delta) + \sum_{w^+ \nmid p} (e_{w^+}(K_{\infty}^+/k_{\infty}^+) - 1)
\]
(see Theorem \ref{thm:Kida} for the precise statement).
This is regarded as an analogue of the classical Riemann--Hurwitz formula for coverings of Riemann surfaces.

Then, in turn, many analogues of Kida's formula have been found in various areas in Iwasawa theory.
For example, Hachimori--Matsuno \cite{HM99} obtained an analogue for Selmer groups of elliptic curves that are ordinary at $p$-adic primes.
Moreover, Pollack--Weston \cite{PW06} generalized that to $p$-adic representations.
See \S \ref{ss:compar} for more information on the previous research.

%%%%%%%%%%%%%%%%%%%%%%
\subsection{Main results}\label{ss:main_results}
%%%%%%%%%%%%%%%%%%%%%%

In this paper, we provide a powerful method to derive analogues of Kida's formula.
In a nutshell, the method is based on a key algebraic theorem about $\lambda$-invariants of perfect complexes, by applying which to arithmetic complexes we can obtain analogues of Kida's formula.
Let us explain the main results without introducing detailed notations.

%%%%%%%%%%%%%%%%%%%%%%
\subsubsection{The algebraic theorem}\label{sss:alg}
%%%%%%%%%%%%%%%%%%%%%%

Firstly we explain the key algebraic theorem.
See \S \ref{sec:alg} for the precise statement and its proof.

Let $\Lambda$ be a ring that is isomorphic to the power series ring $\Z_p[[T]]$ and we will deal with perfect complexes over $\Lambda$.
It is a key idea of this paper to introduce the following notions:
We say $\mu = 0$ for a perfect complex to mean that all the cohomology groups satisfy $\mu = 0$.
Also, we define the $\lambda$-invariant of a perfect complex as the alternating sum of the $\lambda$-invariants of the cohomology groups.

\begin{thm}[Theorem \ref{thm:Kida_alg}]\label{thm:Kida_alg1}
Let $G$ be a finite $p$-group.
Let $C$ be a perfect complex over the group ring $\Lambda[G]$, and put $\ol{C} = \Lambda \otimesL_{\Lambda[G]} C$.
Then $\mu = 0$ for $C$ is equivalent to $\mu = 0$ for $\ol{C}$.
If these equivalent conditions hold, then we have $\lambda(C) = (\# G) \lambda(\ol{C})$.
\end{thm}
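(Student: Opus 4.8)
The plan is to reduce everything to the case of a cyclic group of order $p$ and then induct. First I would record the basic reduction: if $G$ is a finite $p$-group that is not cyclic of order $p$, pick a normal subgroup $H \trianglelefteq G$ with $G/H$ a $p$-group of smaller order (e.g.\ a maximal proper subgroup containing a central element of order $p$, or simply any nontrivial normal $H$). Setting $C' = \Lambda[G/H] \otimesL_{\Lambda[G]} C$, one checks that $C'$ is perfect over $\Lambda[G/H]$ and that $\Lambda \otimesL_{\Lambda[G/H]} C' \simeq \ol{C}$. Applying the (inductively assumed) statement to the pair $(\Lambda[G/H], C')$ and to the pair $(\Lambda[H'], C)$ for $H' = $ the subgroup with $G/H' $ the relevant quotient—more cleanly, applying induction first to $H \trianglelefteq G$ viewing $C$ over $\Lambda[G]$ and $C'$ over $\Lambda[G/H]$, and then to $C'$ over $\Lambda[G/H]$—propagates both the $\mu=0$ equivalence and the multiplicativity $\lambda(C) = (\# G)\lambda(\ol C) = (\#G/H)(\#H)\lambda(\ol C)$. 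So it suffices to treat $G = \Z/p$.

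For $G = \Z/p$, write $\Lambda[G] = \Lambda[\sigma]/(\sigma^p - 1)$ and use the two basic complexes computing group homology: $\ol C = \Lambda \otimesL_{\Lambda[G]} C$ fits into an exact triangle built from $C \xrightarrow{\sigma - 1} C$ and $C \xrightarrow{N} C$, where $N = 1 + \sigma + \dots + \sigma^{p-1}$. Concretely, I would use that $\Lambda$ has a length-two free resolution as a $\Lambda[G]$-module is false (it is not perfect over $\Lambda[G]$ in general), so instead the right tool is: the cone of $\sigma - 1 : C \to C$ computes $\ol C$, and more importantly, base-changing along $\Lambda[G] \to \Lambda[G]/(N) \cong \Lambda[\zeta_p]$ and along $\Lambda[G] \to \Lambda[G]/(\sigma-1) = \Lambda$ gives a Mayer--Vietoris-type triangle relating $C$, $\ol C$, and $C_\chi := \Lambda[\zeta_p] \otimesL_{\Lambda[G]} C$. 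The key numerical input is that $\Lambda[\zeta_p]$ is a finite flat $\Lambda$-algebra of rank $p-1$, totally ramified, so for any finitely generated torsion $\Lambda[\zeta_p]$-module $M$ one has $\lambda_\Lambda(M) = \lambda_{\Lambda[\zeta_p]}(M)$ when $\lambda$ is measured suitably (the ramification index $p-1$ against the residue/rank contributes so that the $\Z_p$-corank count works out), and $\mu = 0$ transfers because $p$ is not a unit but the extension is tame away from... — this is the point needing care.

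The cleanest route, which I would actually pursue, is to bypass $\Lambda[\zeta_p]$ and argue directly with the endomorphism $\sigma - 1$ acting on the cohomology. Since $C$ is perfect over $\Lambda[G]$ and $\Lambda[G]$ is finite free over $\Lambda$, $C$ is also perfect over $\Lambda$; write $h^i = H^i(C)$. The spectral sequence (or the two-term filtration) for $\Lambda \otimesL_{\Lambda[G]} -$ degenerates into long exact sequences involving $H^i(C)_G$, $H^i(C)^G$, and the Tate cohomology $\hat H^*(G, H^i(C))$, which is finite in each degree because $G$ is finite and each $H^i(C)$ is a finitely generated $\Lambda$-module. Finiteness of Tate cohomology kills its contribution to both $\mu$ and $\lambda$ (an honestly finite $\Lambda$-module has $\mu = \lambda = 0$). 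Therefore $\mu = 0$ for $\ol C$ $\iff$ $\mu = 0$ for each $H^i(C)_G$ $\iff$ $\mu = 0$ for each $H^i(C)$ (the last step: $M_G$ and $M$ have the same $\mu$ since $M/M_G$-type quotients and the kernel $M[\sigma-1]$ differ from... — use that $\sigma - 1$ is topologically nilpotent mod $p$, so $M/(\sigma-1)M$ and $M$ have the same support over the height-one prime $(p)$), giving the equivalence. For the $\lambda$ identity, $\lambda$ is additive in the long exact sequences and vanishes on finite modules, so $\lambda(\ol C) = \sum_i (-1)^i \lambda(H^i(C)_G)$ and it remains to show $\lambda_\Lambda(M_G) = \tfrac{1}{\#G}\lambda_\Lambda(M)$ for every finitely generated torsion $\Lambda[G]$-module $M$ with $\mu(M) = 0$. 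This last module-theoretic fact is the real heart: it follows by dévissage to $M = \Lambda[G]/\fp$ for height-one primes $\fp \neq (p)$, where one computes $\lambda_\Lambda(\Lambda[G]/\fp)$ and $\lambda_\Lambda((\Lambda[G]/\fp)_G)$ explicitly using that $\Lambda[G]/(\sigma-1) = \Lambda$, and the factor $\#G$ emerges as $[\mathrm{Frac}(\Lambda)[G] : \mathrm{Frac}(\Lambda)]$ being decomposed into characters of $G$.

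The main obstacle I anticipate is precisely this final step—proving $\lambda_\Lambda(M_G) = \frac{1}{\# G}\lambda_\Lambda(M)$ for torsion $\Lambda[G]$-modules with $\mu = 0$, handling the $(\sigma - 1)$-torsion versus $(\sigma-1)$-cotorsion bookkeeping—and making the $\mu=0$ equivalence clean rather than circular. I expect the resolution to go through the pseudo-isomorphism $M \sim \bigoplus \Lambda[G]/\fp_j^{e_j}$, base change along the surjection $\Lambda[G] \twoheadrightarrow \Lambda$, and a careful count of $\Z_p$-coranks of $\chi$-components after inverting $p$; the hypothesis $\mu = 0$ is exactly what guarantees these pseudo-isomorphisms remain controlled after the non-flat base change $- \otimes_{\Lambda[G]}\Lambda$.
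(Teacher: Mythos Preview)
Your proposal has a genuine gap at its core. The module-level identity you aim to prove --- $\lambda_\Lambda(M_G) = \tfrac{1}{\#G}\lambda_\Lambda(M)$ for every finitely generated torsion $\Lambda[G]$-module $M$ with $\mu(M)=0$ --- is simply false. Take $G=\Z/p$ and $M=\Lambda/(T)$ with trivial $G$-action: then $\mu(M)=0$, $\lambda(M)=1$, and $M_G=M$, so $\lambda(M_G)=1\neq 1/p$. Your d\'evissage to $\Lambda[G]/\fp$ cannot work either, since $\Lambda[G]$ is not a regular ring (nor even a domain) when $G$ is a nontrivial $p$-group, so there is no clean structure theorem to d\'evisse along. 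The hypothesis $\mu=0$ does not repair this; the missing ingredient is a projective-dimension constraint on $M$, which the individual cohomology groups $H^i(C)$ of a perfect complex do \emph{not} satisfy in general.

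What \emph{is} true is the alternating-sum version: under $\mu=0$ the virtual $\Q_p[G]$-representation $\sum_i(-1)^i[H^i(C)\otimes\Q_p]$ is an integer multiple of the regular representation, which gives $\lambda(C)=(\#G)\lambda(\ol C)$. But proving this directly is essentially the content of the theorem. The paper's argument secures it by a completely different induction --- on the \emph{length} $b-a$ of a projective representative of $C$, not on $\#G$. The module statement (their Proposition~2.1) is proved only under the extra hypothesis $\pd_{\Lambda[G]}(M)\leq 1$, which forces $M$ (once $\mu=0$) to be free over $\Z_p[G]$; this is applied solely in the base case $b-a\leq 1$, where $H^b(C)$ genuinely has $\pd\leq 1$. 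For longer complexes one peels off a two-term complex $C'=[C^b\xrightarrow{f}C^b]$ (with $f\in\Lambda$ annihilating $H^b(C)$ and $\mu(\Lambda/(f))=0$) via a triangle $C'\to C\to C''$, and inducts on $C''$. Your reduction to $G=\Z/p$ is also not straightforward as written, since the intermediate base ring $\Lambda[G/H]$ is no longer isomorphic to $\Z_p[[T]]$; this is fixable, but the paper avoids the issue entirely by never reducing the group.
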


%%%%%%%%%%%%%%%%%%%%%%
\subsubsection{Kida's formula for ordinary representations}\label{sss:app_ord}
%%%%%%%%%%%%%%%%%%%%%%

In \S \ref{sec:pf_thm}, to illustrate how useful Theorem \ref{thm:Kida_alg1} is, we apply it to $p$-adic representations that are ordinary in some sense.

Let $T$ be a $p$-adic representation over $\Z_p$ of the absolute Galois group of a number field $k$.
We suppose that $T$ is ordinary; this roughly means that, for each finite prime $v$ of $k$, we are given a suitable submodule $T_v^+$ of $T$ that is stable under the action of the absolute Galois group of $k_v$.
Using this data $\cT^+$, we define the Selmer group $\Sel_{\cT^+}(A/k_{\infty})$ for a $\Z_p$-extension $k_{\infty}/k$.

Let $K_{\infty}/k_{\infty}$ a finite $p$-extension.
By applying Theorem \ref{thm:Kida_alg1} to the relevant Selmer complex, we will obtain the following (the details will be given in \S \ref{sec:pf_thm}).

\begin{thm}[Theorem \ref{thm:main_ord}]\label{thm:main_ord1}
Under certain standard hypotheses, $\mu(\Sel_{\cT^+}(A/K_{\infty})^{\vee}) = 0$ is equivalent to $\mu(\Sel_{\cT^+}(A/k_{\infty})^{\vee}) = 0$.
If these equivalent conditions hold, then we have an explicit relation between $\lambda(\Sel_{\cT^+}(A/K_{\infty})^{\vee})$ and $\lambda(\Sel_{\cT^+}(A/k_{\infty})^{\vee})$.
\end{thm}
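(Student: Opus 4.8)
First, I would translate the statement into the language of Selmer complexes, so that Theorem~\ref{thm:Kida_alg1} applies. Write $\Gamma = \Gal(k_\infty/k) \cong \Z_p$ and $G = \Gal(K_\infty/k_\infty)$, a finite $p$-group. We may assume (after a Galois-closure argument of the classical type, if necessary) that $K_\infty/k$ is Galois; then, since the conjugation action of $\Gal(K_\infty/k)$ on $G$ factors through a finite quotient of $\Gamma$ and $\Gamma$ is a free pro-$p$ group of rank one, replacing $k$ by a suitable finite layer of $k_\infty/k$ --- which changes neither the Selmer groups over $k_\infty$ and $K_\infty$ nor their $\lambda$- and $\mu$-invariants --- we arrange that $\Gal(K_\infty/k) \cong \Gamma \times G$, so that $\Z_p[[\Gal(K_\infty/k)]] \cong \Lambda[G]$ with $\Lambda := \Z_p[[\Gamma]] \cong \Z_p[[T]]$.

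Next, I would attach to $(T,\cT^+)$ and the tower $K_\infty/k$ a Selmer complex $C$ in the sense of Nekov\'a\v{r}, regarded in the derived category of $\Lambda[G] = \Z_p[[\Gal(K_\infty/k)]]$-modules: it is the mapping fibre of the localization map from $\RG(G_{k,S},-)$ --- with coefficients in the free module $T \otimes_{\Z_p} \Z_p[[\Gal(K_\infty/k)]]$, for a suitable finite set of places $S$ of $k$ --- to the local conditions prescribed by the $T_v^+$. Since $\RG(G_{k,S},-)$ of a free module, and the local-condition complexes, are perfect over $\Lambda[G]$, so is $C$. Moreover, because the coefficient module is $\Lambda[G]$-free and the local conditions are pulled back from $k$, the base-change (Shapiro) formalism yields a canonical isomorphism
\[
 \ol{C} \;=\; \Lambda \otimesL_{\Lambda[G]} C \;\simeq\; C_{k_\infty},
\]
where $C_{k_\infty}$ is the analogous Selmer complex for the tower $k_\infty/k$.

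The third step relates the cohomology of these complexes to the arithmetic objects in the statement. Under the standard hypotheses, the cohomology of $C_{k_\infty}$ is concentrated in the expected degrees and its only term not under explicit control is $H^2(C_{k_\infty}) \cong \Sel_{\cT^+}(A/k_\infty)^{\vee}$; the remaining cohomology --- arising from the local conditions, in particular from the primes $w \nmid p$ of $K_\infty$ that ramify in $K_\infty/k_\infty$ and from the discrepancy between the complex-theoretic and the naive local conditions at $p$ --- consists of finitely many explicitly described $\Lambda$-modules, each with $\mu = 0$ and with $\lambda$-invariant expressed through local Euler-factor data, and the same holds for $C$ over $K_\infty$. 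Using the additivity of the $\lambda$-invariant of perfect complexes along the distinguished triangles defining $C$ and $C_{k_\infty}$ (see \S\ref{sss:alg}), one finds that $\lambda(C_{k_\infty})$ equals $\lambda(\Sel_{\cT^+}(A/k_\infty)^{\vee})$ corrected by those explicit local terms, and similarly $\lambda(C)$ equals $\lambda(\Sel_{\cT^+}(A/K_\infty)^{\vee})$ corrected by the analogous terms over $K_\infty$; moreover ``$\mu = 0$ for $C_{k_\infty}$'' is equivalent to $\mu(\Sel_{\cT^+}(A/k_\infty)^{\vee}) = 0$, and ``$\mu = 0$ for $C$'' to $\mu(\Sel_{\cT^+}(A/K_\infty)^{\vee}) = 0$.

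Finally, applying Theorem~\ref{thm:Kida_alg1} to $C$ (with $\ol{C} \simeq C_{k_\infty}$), the equivalence of ``$\mu = 0$ for $C$'' and ``$\mu = 0$ for $\ol{C}$'' gives $\mu(\Sel_{\cT^+}(A/K_\infty)^{\vee}) = 0$ if and only if $\mu(\Sel_{\cT^+}(A/k_\infty)^{\vee}) = 0$, while the identity $\lambda(C) = (\# G)\,\lambda(\ol{C}) = [K_\infty:k_\infty]\,\lambda(C_{k_\infty})$, unwound through the third step, expresses $\lambda(\Sel_{\cT^+}(A/K_\infty)^{\vee})$ in terms of $[K_\infty:k_\infty]\,\lambda(\Sel_{\cT^+}(A/k_\infty)^{\vee})$ together with the difference between the local corrections over $K_\infty$ and $[K_\infty:k_\infty]$ times those over $k_\infty$ --- a difference that reassembles into the Riemann--Hurwitz-type sum (and the analogue of the constant $\delta$). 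I expect the third step to be the main obstacle: determining precisely the auxiliary cohomology of the Selmer complexes, checking that each auxiliary $\Lambda$-module really has $\mu = 0$, and evaluating its $\lambda$-invariant --- this is exactly where the standard hypotheses are used and where the classical ramification terms are produced. By comparison, the perfectness and base-change input of the second step is by now routine for Selmer complexes, and the last step is an immediate application of Theorem~\ref{thm:Kida_alg1}.
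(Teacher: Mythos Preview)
Your proposal is correct and matches the paper's approach: reduce to $\Z_p[[\Gal(K_\infty/k)]]\simeq\Lambda[G]$, introduce the Nekov\'a\v{r} Selmer complex $C$ with its perfectness and base-change $\ol{C}\simeq C_{k_\infty}$, compute its cohomology in terms of the Selmer group plus explicit local and global terms, and feed this into Theorem~\ref{thm:Kida_alg1}. The one place your ``third step'' needs more than you indicate is that $H^2(C)$ is only an extension of $\Sel_{\cT^+}(A/K_\infty)^\vee$ by the local terms $\bigoplus_v H^0(K_{\infty,v},A_v^-)^\vee$ (Proposition~\ref{prop:C_coh}), and controlling $H^1(C)$ is not automatic --- the paper uses the Euler-characteristic-zero hypothesis (Assumption~\ref{ass:Sel_rk}) to force $H^1(C)$ to be torsion once $H^2$ and $H^3$ are, and then an explicit injection $H^1(C)_{\tors}\hookrightarrow H^0(K_\infty,T)$ (Lemma~\ref{lem:main_pf1}) to obtain $\mu(H^1(C))=0$; this is also the source of the $\delta_{\cT^+}$ correction term.
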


Note that Theorem \ref{thm:main_ord1} has an overlap with Pollack--Weston \cite{PW06}.
However, our method is different, and indeed we have succeeded in removing several simplifying assumptions.
For instance, the formulation of \cite{PW06} cannot be applied to $T = \Z_p(1)$ to recover results for classical Iwasawa modules, while we can as will be explained shortly.

%%%%%%%%%%%%%%%%%%%%%%
\subsubsection{Arithmetic applications}\label{sss:app_arith}
%%%%%%%%%%%%%%%%%%%%%%

We will see in \S \ref{sec:arith} that our method is powerful enough to obtain various analogues of Kida's formula.
The results in \S\S \ref{ss:Iw}--\ref{ss:ell} are deduced from Theorem \ref{thm:main_ord1}, while we have to use Theorem \ref{thm:Kida_alg1} again to deduce the one in \S \ref{ss:ell_ss}.
Here we summarize the objects of study in each subsection.

\begin{itemize}
\item[\S \ref{ss:Iw}:]
The $p$-ramified Iwasawa modules for the cyclotomic $\Z_p$-extensions of totally real fields.
The result is essentially equivalent to the original Kida's formula by the Kummer duality, as will be discussed in \S \ref{ss:Iw2}.
\item[\S \ref{ss:Iw2}:]
The minus components of split Iwasawa modules for the cyclotomic $\Z_p$-extensions of CM-fields.
This is a variation of the original Kida's formula, and indeed they are equivalent to each other.
\item[\S \ref{ss:Iw4}:]
The $\Sigma$-ramified Iwasawa modules for ordinary CM-fields, where $\Sigma$ is a $p$-adic CM-type.
By specializing to the case where the base field is an imaginary quadratic field, we recover a recent result of Murakami \cite{Mur}.
The general case seems to be new.
\item[\S \ref{ss:ell}:]
The Selmer groups for ordinary elliptic curves.
This recovers the result of Hachimori--Matsuno \cite{HM99}.
\item[\S \ref{ss:ell_ss}:]
The (multiply-)signed Selmer groups for supersingular elliptic curves.
This basically recovers a result of Lim \cite{Lim22}.
\end{itemize}

%%%%%%%%%%%%%%%%%%%%%%
\subsubsection{Analogues for $p$-adic Lie extensions}\label{sss:Lie}
%%%%%%%%%%%%%%%%%%%%%%

Finally in \S \ref{sec:Lie}, we discuss Kida's formula in the context of non-commutative Iwasawa theory, developed by Coates--Fukaya--Kato--Sujatha--Venjakob \cite{CFKSV05}, among others.
More precisely, we consider the case where $\Gal(K_{\infty}/k_{\infty})$ is a non-trivial, pro-$p$, $p$-adic Lie group without $p$-torsion (so in particular $K_{\infty}/k_{\infty}$ is infinite).
We will show that our method established in \S \S \ref{sec:alg}--\ref{sec:arith} can be also applied to this setting, which results in a formula on the $\Z_p[[\Gal(K_{\infty}/k_{\infty})]]$-rank of the Selmer group.
Our result recovers a theorem of Hachimori--Sharifi \cite{HS05} concerning the minus components of unramified Iwasawa modules for CM-fields.

%%%%%%%%%%%%%%%%%%%%%%
\subsection{Review of previous work}\label{ss:compar}
%%%%%%%%%%%%%%%%%%%%%%

There are a number of methods to deduce analogues of Kida's formula.
The original proof by Kida \cite{Kid80} is based on genus theory and studies the behavior of ideals with respect to field extensions.
Then Iwasawa \cite{Iwa81} gave an alternative proof via representation theory.
Sinnott \cite{Sin84} gave the third proof from analytic perspective, i.e., using $p$-adic $L$-functions.

The method of Iwasawa \cite[pages 285--288]{Iwa81} is adopted later by, e.g., Hachimori--Matsuno \cite{HM99} and Murakami \cite{Mur}.
Firstly we reduce the problem to the special case where the extension is cyclic of degree $p$.
For a cyclic group $G$ of order $p$, we have a classification theorem of $\Z_p[G]$-lattices.
This reduces the task to the computation of Herbrand quotients of arithmetic modules.

Pollack--Weston \cite{PW06} provided a different method.
Again after reducing to the case of cyclic extensions of degree $p$, they used the character decomposition with respect to the Galois action.
The characters are congruent modulo $p$, so to the twists one can apply a result of Weston \cite{Wes05} concerning congruent representations.

Greenberg \cite{Gree11} also obtained an interpretation of Kida's formula, mainly focusing on ordinary elliptic curves.
It is based on the idea that the non-primitive Selmer groups behave well for congruent representations (cf.~Greenberg--Vatsal \cite{GV00}) and then Kida's formula is obtained by comparing the non-primitive ones with the primitive ones.
This idea is adopted by Lim \cite{Lim22} to study the supersingular reduction case.
This method has an advantage that we can weaken the $\mu = 0$ hypothesis.
Hachimori--Sharifi \cite{HS05} also reproved Kida's formula with a weakened $\mu = 0$ hypothesis.
For a further attempt to remove the $\mu = 0$ hypothesis, see Lim \cite{Lim21}.

Our method in this paper is, at least superficially, different from all of these previous ones.
We deal with the general case directly without reducing to the cases of cyclic of order $p$.
We do not use the classification of $\Z_p[G]$-lattices or the congruence relation between twists of representations.

As for $p$-adic Lie extensions, our method is still different from Hachimori--Sharifi \cite{HS05} mentioned in \S \ref{sss:Lie}.
They determined the rank of the minus component of the unramified Iwasawa module by applying the original Kida's formula to the finite intermediate extensions and then investigating the asymptotic growth pattern of the $\lambda$-invariants.
Our method is more direct.

Finally, it is worth mentioning that Nichifor \cite[Theorem 3.3]{Nic04} proved the projectivity of the Iwasawa modules by applying Kida's formula.
Conceptually, this is in the opposite direction to our perspective in this paper; we know the perfectness of arithmetic objects in advance, from which Kida's formula follows.

\section*{Acknowledgments}

I am very grateful to Takamichi Sano for his encouragement and giving valuable comments.
This work is supported by JSPS KAKENHI Grant Number 22K13898.

%%%%%%%%%%%%%%%%%%%%%%
\section{The key algebraic theorem}\label{sec:alg}
%%%%%%%%%%%%%%%%%%%%%%

In this section, we prove the key algebraic theorem (Theorem \ref{thm:Kida_alg}) concerning perfect complexes.
The proof makes use of a module-theoretic version (Proposition \ref{prop:alg1}), which we first prove.

Let $\Lambda$ be a ring that is isomorphic to $\Z_p[[T]]$ with $p$ a prime number.
(The coefficient ring $\Z_p$ can be replaced by any $p$-adic integer ring, but to ease the notation we use $\Z_p$.)
Let $G$ be a finite $p$-group.
Throughout this paper we always mean left modules by simply saying modules.

%%%%%%%%%%%%%%%%%%%%%%
\subsection{Module version}\label{ss:alg_mod}
%%%%%%%%%%%%%%%%%%%%%%

For a finitely generated $\Lambda$-module $M$, we have the $\lambda$, $\mu$-invariants $\lambda(M), \mu(M) \geq 0$; we set $\lambda(M) = \mu(M) = + \infty$ if $M$ is not torsion.
Instead of reviewing their classical definition that depends on the structure theorem, let us observe their properties that we need in this paper:
\begin{itemize}
\item
We have $\mu(M) = 0$ if and only if $M$ is finitely generated over $\Z_p$.
\item
We have $\lambda(M) = \dim_{\Q_p}(\Q_p \otimes_{\Z_p} M)$.
\end{itemize}
In particular, if $\mu(M) = 0$, then $\lambda(M)$ coincides with the usual $\Z_p$-rank of $M$, denoted by $\rank_{\Z_p}(M)$.
Note that the property $\mu(M) = 0$ and the value $\lambda(M)$ are independent of the action of the indeterminate $T$.

Note also that, for a finitely generated $\Lambda$-module $M$, the property $\mu(M) = 0$ includes the assertion that $M$ is torsion.
In most of our main results (e.g., Theorem \ref{thm:main_ord1}), we do not know whether the arithmetic modules are torsion a priori, and the statements include consequences about that.

We will consider modules over the group ring $\Lambda[G]$.
The $\lambda$, $\mu$-invariants of finitely generated $\Lambda[G]$-modules are defined after forgetting the action of $G$.
For a $\Lambda[G]$-module $M$, define a $\Lambda$-module $\ol{M}$ by $\ol{M} = \Lambda \otimes_{\Lambda[G]} M$.

\begin{prop}\label{prop:alg1}
Let $M$ be a finitely generated $\Lambda[G]$-module.
\begin{itemize}
\item[(1)]
We have $\mu(M) = 0$ if and only if $\mu(\ol{M}) = 0$.
\item[(2)]
Suppose $\pd_{\Lambda[G]}(M) \leq 1$ ($\pd$ denotes the projective dimension).
If the equivalent conditions in (1) hold, then we have $\lambda(M) = (\# G) \lambda(\ol{M})$.
\end{itemize}
\end{prop}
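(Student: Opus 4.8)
The plan is to reduce the proposition to a determinant identity over the discrete valuation ring $\Lambda_1 := \Lambda/p \cong \F_p[[T]]$, obtained by reducing a length-one free resolution of $M$ over $\Lambda[G]$ modulo $p$; the one substantive point is a determinant identity that holds only in characteristic $p$, and everything else is bookkeeping using the hypotheses $\pd_{\Lambda[G]}M \leq 1$ and $\mu = 0$.

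For part (1): since $\ol M = M/I_G M$ (where $I_G = \ker(\Lambda[G]\to\Lambda)$) is a quotient of $M$, we have $\mu(M) = 0 \Rightarrow \mu(\ol M) = 0$ at once. For the converse, recall that $\mu(N) = 0$ for a finitely generated $\Lambda$-module $N$ is equivalent to $N/pN$ being finite. The augmentation ideal $I$ of $\F_p[G]$ is nilpotent (it is the maximal ideal of the Artinian local ring $\F_p[G]$, as $G$ is a $p$-group), so $\mathfrak{I} := I\cdot\F_p[[T]][G]$ is a nilpotent ideal of $\F_p[[T]][G]$. Filtering $N := M/pM$ by the powers $\mathfrak{I}^j N$ — a finite filtration — each graded quotient is a quotient of $(\mathfrak{I}^j/\mathfrak{I}^{j+1})\otimes_{\F_p[[T]]}(N/\mathfrak{I}N)$; since $\mathfrak{I}^j/\mathfrak{I}^{j+1}$ is $\F_p[[T]]$-free of finite rank and $N/\mathfrak{I}N = \ol M/p\ol M$ is finite by hypothesis, all graded quotients, hence $N = M/pM$ itself, are finite.

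For part (2), assume the equivalent conditions of (1). Choose a resolution $0 \to P_1 \xrightarrow{\psi} P_0 \to M \to 0$ by finitely generated projective $\Lambda[G]$-modules. Since $\Lambda[G]$ is module-finite over the complete local ring $\Lambda$ and $\Lambda[G]/\mathfrak{m}_\Lambda\Lambda[G] = \F_p[G]$ is local, the ring $\Lambda[G]$ is local, so $P_i \cong \Lambda[G]^{a_i}$. As $\mu(M) = 0$ forces $M$ to be $\Lambda$-torsion, comparing $\Lambda$-ranks (recall $\Lambda[G]$ is $\Lambda$-free of rank $\#G$) gives $a_0 = a_1 =: a$. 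Applying $\Lambda \otimes_{\Lambda[G]} (-)$ to the resolution produces an exact sequence $0 \to \Tor_1^{\Lambda[G]}(\Lambda, M) \to \Lambda^a \xrightarrow{\bar\psi} \Lambda^a \to \ol M \to 0$, in which $\Tor_1^{\Lambda[G]}(\Lambda, M) = \ker\bar\psi$ must vanish: otherwise it would be a nonzero $\Lambda$-torsion-free module and $\ol M$ would not be $\Lambda$-torsion, contradicting $\mu(\ol M) = 0$. Hence $\ol M = \Cok\bar\psi$. Now reduce modulo $p$, writing $\psi_1\colon \Lambda_1[G]^a \to \Lambda_1[G]^a$ and $\bar\psi_1\colon \Lambda_1^a \to \Lambda_1^a$ for the reductions; note $\bar\psi_1 = \epsilon(\psi_1)$, the entrywise augmentation. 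Since $M$ (resp.\ $\ol M$) is finitely generated over $\Z_p$, its $p$-torsion $M[p] \cong \ker\psi_1$ (resp.\ $\ol M[p] \cong \ker\bar\psi_1$) is a finite submodule of the $\Lambda_1$-torsion-free module $\Lambda_1[G]^a$ (resp.\ $\Lambda_1^a$), hence is zero. Therefore $M$ and $\ol M$ are free over $\Z_p$, with $\lambda(M) = \dim_{\F_p}(M/pM)$ and $\lambda(\ol M) = \dim_{\F_p}(\ol M/p\ol M)$, and we obtain short exact sequences
\[
 0 \to \Lambda_1[G]^a \xrightarrow{\psi_1} \Lambda_1[G]^a \to M/pM \to 0, \qquad 0 \to \Lambda_1^a \xrightarrow{\bar\psi_1} \Lambda_1^a \to \ol M/p\ol M \to 0
\]
over the discrete valuation ring $\Lambda_1$.

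The crux is the identity $\det\Phi = (\det\bar\Phi)^{\#G}$ in $\Lambda_1$, where $\Phi$ (resp.\ $\bar\Phi$) denotes the $\Lambda_1$-linear endomorphism of $\Lambda_1[G]^a \cong \Lambda_1^{a\#G}$ (resp.\ of $\Lambda_1^a$) underlying $\psi_1$ (resp.\ $\bar\psi_1$). Granting it, and using that the $\Lambda_1$-length of the cokernel of an injective endomorphism of a finite free module over the DVR $\Lambda_1$ equals the $T$-adic valuation of its determinant, one gets
\begin{align*}
 \lambda(M) = \dim_{\F_p}(M/pM) &= v_T(\det\Phi) = \#G \cdot v_T(\det\bar\Phi) \\
 &= \#G \cdot \dim_{\F_p}(\ol M/p\ol M) = \#G \cdot \lambda(\ol M),
\end{align*}
completing the proof. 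To prove the identity, filter $\Lambda_1[G]^a$ by the powers of the nilpotent ideal $\mathfrak{I} = I\cdot\Lambda_1[G]$: being $\Lambda_1[G]$-linear, $\psi_1$ preserves this finite filtration; the graded pieces are free $\Lambda_1$-modules isomorphic to $\Lambda_1^a \otimes_{\F_p}(I^k/I^{k+1})$, on which $\psi_1$ acts as $\bar\Phi \otimes \mathrm{id}$; choosing a basis adapted to the filtration therefore gives $\det\Phi = \prod_k (\det\bar\Phi)^{\dim_{\F_p}(I^k/I^{k+1})} = (\det\bar\Phi)^{\dim_{\F_p}\F_p[G]} = (\det\bar\Phi)^{\#G}$. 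This step is the main obstacle and the genuinely new point: the analogous identity is false over $\Lambda[G]$ — the determinant of multiplication by $1 + \sigma$ on $\Z_p[\Z/p]$ equals $2$, not $2^p$ — and it is precisely the nilpotence of the augmentation ideal in characteristic $p$ that makes the reduction modulo $p$ work.
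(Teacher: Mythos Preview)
Your proof is correct and takes a genuinely different route from the paper's.

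For part (1), both arguments amount to Nakayama's lemma over the local ring $\Z_p[G]$: the paper invokes it in one line, while you unpack it explicitly via the filtration of $M/pM$ by powers of the nilpotent augmentation ideal of $\F_p[G]$.

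For part (2), the paper's argument is structural. From $\mu(M)=0$ and $\pd_{\Lambda}(M)\le 1$ one sees that $M$ has no nonzero finite $\Lambda$-submodule, hence is $\Z_p$-free; then (using that $\Lambda[G]^a$ is cohomologically trivial as a $G$-module, or equivalently that $\F_p[G]$ is self-injective so that finite projective dimension forces freeness) one concludes that $M$ is actually free over $\Z_p[G]$ of some rank $r$, whence $\lambda(M)=(\#G)r$ and $\lambda(\ol M)=r$ immediately. Your argument instead stays at the level of the given $\Lambda[G]$-resolution, reduces modulo $p$, and proves the determinant identity $\det\Phi=(\det\bar\Phi)^{\#G}$ over the DVR $\Lambda_1=\F_p[[T]]$ by filtering $\Lambda_1[G]^a$ with powers of the nilpotent augmentation ideal. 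This is more computational but entirely self-contained: you never need the structural fact that $M$ is $\Z_p[G]$-free, nor any input about cohomological triviality or Frobenius algebras. The paper's approach is shorter and yields the stronger conclusion (freeness of $M$ over $\Z_p[G]$); your approach makes transparent exactly where characteristic $p$ is essential, as your closing example of $1+\sigma$ acting on $\Z_p[\Z/p]$ nicely illustrates.
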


\begin{proof}
This is a special case of \cite[Proposition 2.4]{Lim22}, which deals with more generally quasi-projective modules.
For the convenience of the readers, we reproduce the proof, following \cite[Proposition 8.5]{Kata_21}.

Since $\Z_p[G]$ is a local ring, claim (1) follows from topological Nakayama's lemma.
To show (2), suppose $\mu = 0$.
It is well-known that $\pd_{\Lambda}(M) \leq 1$ is equivalent to that $M$ does not contain a non-trivial finite submodule.
Therefore, $M$ is finitely generated and free over $\Z_p$.
Then $\pd_{\Z_p[G]}(M) \leq 1$ is equivalent to that $M$ is free over $\Z_p[G]$, since again $\Z_p[G]$ is local.
If we write $r$ for the rank of $M$ as a free $\Z_p[G]$-module, we have $\lambda(M) = (\# G) r$.
On the other hand, $\ol{M}$ is a free $\Z_p$-module of rank $r$.
These show $\lambda(M) = (\# G) \lambda(\ol{M})$, as claimed.
\end{proof}

%%%%%%%%%%%%%%%%%%%%%%
\subsection{Complex version}\label{ss:alg_cpx}
%%%%%%%%%%%%%%%%%%%%%%

We introduce the following notation.

\begin{itemize}
\item
Let $\DP(\Lambda)$ be the derived category of perfect complexes over $\Lambda$.
\item
Let $\DPT(\Lambda) \subset \DP(\Lambda)$ be the derived category of perfect complexes over $\Lambda$ whose cohomology groups are all torsion over $\Lambda$.
\item
Let $\DPTM(\Lambda) \subset \DP(\Lambda)$ be the derived category of perfect complexes over $\Lambda$ whose cohomology groups all satisfy $\mu = 0$.
\end{itemize}

For $C \in \DP(\Lambda)$, we say ``$\mu = 0$ for $C$'' if we have $C \in \DPTM(\Lambda)$.
By our convention on $\mu$, we have $\DPTM(\Lambda) \subset \DPT(\Lambda)$.

\begin{lem}\label{lem:Ser}
Let $C' \to C \to C''$ be a triangle in $\DP(\Lambda)$.
If two of $C$, $C'$, $C''$ are in $\DPT(\Lambda)$ (resp.~$\DPTM(\Lambda)$), then so is the remaining one.
\end{lem}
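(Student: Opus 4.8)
The plan is to reduce everything to the long exact sequence of cohomology groups attached to the triangle $C' \to C \to C''$, and then apply the elementary fact that, for finitely generated $\Lambda$-modules, both the class of torsion modules and the class of modules with $\mu = 0$ form \emph{Serre subcategories} (they are closed under submodules, quotients, and extensions). First I would recall that a triangle in $\DP(\Lambda)$ induces a long exact sequence
\[
 \cdots \to H^i(C') \to H^i(C) \to H^i(C'') \to H^{i+1}(C') \to \cdots,
\]
and that since all three complexes are perfect, all these cohomology groups are finitely generated over $\Lambda$.

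The key structural input is the following: if $M' \to M \to M''$ is an exact sequence of finitely generated $\Lambda$-modules and two of them are torsion (resp.\ satisfy $\mu = 0$), then so is the third. For the torsion property this is immediate since $\Q_p \otimes_{\Z_p}(-)$, or rather $\mathrm{Frac}(\Lambda) \otimes_\Lambda (-)$, is exact. For $\mu = 0$, one uses the characterization recalled in \S\ref{ss:alg_mod} that $\mu(M) = 0$ if and only if $M$ is finitely generated over $\Z_p$; then in a short exact sequence $0 \to A \to B \to C \to 0$ of finitely generated $\Lambda$-modules, $B$ is finitely generated over $\Z_p$ if and only if both $A$ and $C$ are, and one splits the three-term exact sequence $M' \to M \to M''$ into short exact sequences via its image to reduce to this case. (Here one should note that $\mu = 0$ subsumes the torsion condition, so the $\DPTM$ case automatically carries the $\DPT$ case along.)

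With this in hand, I would argue as follows. Suppose $C, C'' \in \DPT(\Lambda)$ (the other two arrangements are symmetric, using the rotation of the triangle, which only permutes $\{C', C[-1], C''\}$ up to shift and sign and does not affect whether cohomology is torsion or satisfies $\mu = 0$). Fix $i$ and look at the segment $H^{i-1}(C'') \to H^i(C') \to H^i(C)$ of the long exact sequence: the outer terms are torsion by hypothesis, hence so is the middle term $H^i(C')$ by the Serre property applied to the exact sequence $H^{i-1}(C'')/(\text{im}) \hookrightarrow H^i(C') \twoheadrightarrow \ker(H^i(C') \to H^i(C))$... more cleanly, $H^i(C')$ sits in an exact sequence with a subquotient of $H^{i-1}(C'')$ and a subquotient of $H^i(C)$, both torsion, so $H^i(C')$ is torsion. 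Since $C'$ is already known to be perfect, this gives $C' \in \DPT(\Lambda)$. The identical argument with ``torsion'' replaced by ``finitely generated over $\Z_p$'' (equivalently $\mu = 0$) handles the $\DPTM$ case.

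The only mild subtlety — and the closest thing to an obstacle — is bookkeeping the three cyclic orderings of the triangle and confirming that in each the relevant cohomology group is squeezed between two subquotients coming from the two complexes assumed to lie in $\DPT$ (resp.\ $\DPTM$); once the Serre-subcategory principle is isolated, each case is a two-line diagram chase. I do not anticipate any genuine difficulty.
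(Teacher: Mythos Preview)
Your proposal is correct and is exactly the approach the paper takes: the paper's entire proof is the one-line remark ``This follows immediately from the long exact sequence of the cohomology groups,'' and you have simply unpacked that sentence by invoking the Serre-subcategory property of torsion (resp.\ $\mu=0$) modules. There is nothing to add.
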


\begin{proof}
This follows immediately from the long exact sequence of the cohomology groups.
\end{proof}

\begin{defn}\label{defn:l_m}
For $C \in \DPT(\Lambda)$,
we define its $\lambda$-invariant by
\[
\lambda(C) = \sum_{i \in \Z} (-1)^i \lambda(H^i(C)).
\]
(Note that we may define $\mu(C)$ in the same way, but this is not useful for our purpose.)
\end{defn}

\begin{lem}\label{lem:alg2}
Let $C' \to C \to C''$ be a triangle in $\DPT(\Lambda)$.
Then we have $\lambda(C) = \lambda(C') + \lambda(C'')$.
\end{lem}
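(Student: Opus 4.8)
The plan is to deduce the identity from the long exact cohomology sequence attached to the triangle, together with the additivity of the $\lambda$-invariant on short exact sequences of finitely generated torsion $\Lambda$-modules.

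First I would recall that, since $C'$, $C$, $C''$ are perfect complexes, only finitely many of their cohomology groups are nonzero, and since they all lie in $\DPT(\Lambda)$ these cohomology groups are finitely generated torsion over $\Lambda$; in particular each has a finite $\lambda$-invariant, so the sums defining $\lambda(C')$, $\lambda(C)$, $\lambda(C'')$ are finite. The triangle $C' \to C \to C''$ yields the long exact sequence
\[
\cdots \to H^i(C') \to H^i(C) \to H^i(C'') \to H^{i+1}(C') \to \cdots,
\]
which is a bounded exact sequence of such modules.

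Next I would invoke the formula $\lambda(M) = \dim_{\Q_p}(\Q_p \otimes_{\Z_p} M)$ recalled in \S\ref{ss:alg_mod}: since $\Q_p \otimes_{\Z_p} -$ is exact, $\lambda$ is additive on short exact sequences of finitely generated torsion $\Lambda$-modules. Splitting the long exact sequence above into short exact pieces at each term then shows that the alternating sum $\sum_j (-1)^j \lambda(N_j)$ vanishes, where $(N_j)_{j}$ enumerates the terms of the long exact sequence in order. Grouping the terms three at a time as $H^i(C'), H^i(C), H^i(C'')$ and using $(-1)^{3i} = (-1)^i$, $(-1)^{3i+1} = -(-1)^i$, $(-1)^{3i+2} = (-1)^i$, this relation becomes
\[
\sum_{i \in \Z} (-1)^i \bigl( \lambda(H^i(C')) - \lambda(H^i(C)) + \lambda(H^i(C'')) \bigr) = 0,
\]
which is exactly $\lambda(C') - \lambda(C) + \lambda(C'') = 0$, i.e.\ the claimed equality $\lambda(C) = \lambda(C') + \lambda(C'')$.

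There is no real obstacle here; the only points needing a little care are the bookkeeping of signs in the alternating sum and the observation that the long exact sequence is bounded, so that all sums in sight are finite — both immediate from the definitions and from perfectness. Alternatively, one may phrase the whole argument as the statement that $\lambda$ descends to an additive map on the Grothendieck group of bounded complexes of finitely generated torsion $\Lambda$-modules, in which the mapping-cone relation is built in.
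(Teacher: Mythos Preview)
Your proof is correct and follows essentially the same approach as the paper's own proof, which simply reads: ``This follows from the long exact sequence of the cohomology groups and the additivity of the $\lambda$-invariants with respect to exact sequences.'' You have merely unpacked these two ingredients in detail, including the sign bookkeeping, so there is nothing substantively different to compare.
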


\begin{proof}
This follows from the long exact sequence  of the cohomology groups and the additivity of the $\lambda$-invariants with respect to exact sequences.
\end{proof}

We define $\DP(\Lambda[G])$, $\DPT(\Lambda[G])$, and $\DPTM(\Lambda[G])$ in the same way as for $\Lambda$.
Again, for $C \in \DP(\Lambda[G])$, the $\lambda$ and $\mu$ depend only on the information after forgetting the action of $G$.

For $C \in \DP(\Lambda[G])$, we put $\ol{C} = \Lambda \otimesL_{\Lambda[G]} C \in \DP(\Lambda)$.
The following is the key theorem.

\begin{thm}\label{thm:Kida_alg}
For $C \in \DP(\Lambda[G])$, we have $\mu = 0$ for $C$ if and only if $\mu = 0$ for $\ol{C}$.
If these equivalent conditions hold, then we have
$
\lambda(C) = (\# G) \lambda(\ol{C}).
$
\end{thm}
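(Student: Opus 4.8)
The plan is to prove the two assertions by somewhat different mechanisms: the equivalence of the ``$\mu=0$'' conditions by passing to a generic fibre over a field of characteristic $p$, and the $\lambda$-identity by first upgrading $C$ to a perfect complex over $\Z_p[G]$ and then performing an elementary Euler characteristic count. (Recall that $\ol C\in\DP(\Lambda)$ automatically, so both ``$\mu=0$'' conditions are well-posed.)

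For the $\mu$-part, I would first prove, for an arbitrary $D\in\DP(\Lambda)$, the criterion: $D\in\DPTM(\Lambda)$ if and only if $D\otimesL_\Lambda L=0$, where $L$ denotes the fraction field of $\Lambda/p\Lambda\cong\F_p[[T]]$, regarded as a $\Lambda$-algebra via $\Lambda\twoheadrightarrow\Lambda/p\hookrightarrow L$. Since $L$ is flat over the discrete valuation ring $\Lambda/p$, one has $D\otimesL_\Lambda L\simeq(D\otimesL_\Lambda(\Lambda/p))\otimes_{\Lambda/p}L$, which vanishes exactly when every cohomology group of $D\otimesL_\Lambda(\Lambda/p)$ has finite length over $\Lambda/p$; combining the exact sequences $0\to H^i(D)/p\to H^i(D\otimesL_\Lambda(\Lambda/p))\to H^{i+1}(D)[p]\to 0$ with the elementary structure-theoretic fact that a finitely generated $\Lambda$-module $M$ satisfies ``$M/pM$ finite'' precisely when $M$ is $\Lambda$-torsion with $\mu(M)=0$, one sees that this finiteness is equivalent to $\mu=0$ for $D$. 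Applying the criterion to both $C$ and $\ol C$, and using the identification $\ol C\otimesL_\Lambda L\simeq L\otimesL_{L[G]}(C\otimesL_\Lambda L)$ coming from associativity of derived tensor products, the desired equivalence is reduced to the statement that, for $V:=C\otimesL_\Lambda L\in\DP(L[G])$, one has $V=0$ if and only if $L\otimesL_{L[G]}V=0$. As $L$ has characteristic $p$ and $G$ is a $p$-group, $L[G]$ is a (possibly noncommutative) local ring with residue field $L$, and this last statement is derived Nakayama --- it is immediate after passing to the minimal representative of $V$.

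For the $\lambda$-part, assume $\mu=0$ for $C$, so that every $H^i(C)$ is finitely generated over $\Z_p$. The decisive step --- which I expect to be the main obstacle --- is to show that $C$, regarded via $\Z_p[G]\hookrightarrow\Lambda[G]$ as a complex of $\Z_p[G]$-modules, is perfect over $\Z_p[G]$. For this, represent $C$ by a bounded complex $P^\bullet$ of finitely generated free $\Lambda[G]$-modules (possible because $\Lambda[G]$ is local); since $\Lambda[G]\cong\Z_p[G][[T]]$ is flat over the Noetherian local ring $\Z_p[G]$, each $P^i$ is $\Z_p[G]$-flat, so $C\otimesL_{\Z_p[G]}\F_p\simeq P^\bullet\otimes_{\Z_p[G]}\F_p$ is bounded; as $C$ also has finitely generated cohomology over $\Z_p[G]$, the standard characterization of perfect complexes over a Noetherian local ring (boundedness of the derived tensor product with the residue field forces finite projective dimension) gives the claim. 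Now fix a bounded complex $Q^\bullet$ of finitely generated free $\Z_p[G]$-modules, say $Q^i\cong\Z_p[G]^{r_i}$, quasi-isomorphic to $C$ over $\Z_p[G]$. Since the $H^i(C)$ are finitely generated over $\Z_p$, the Euler characteristic identity yields $\lambda(C)=\sum_i(-1)^i\rank_{\Z_p}H^i(C)=\sum_i(-1)^i\rank_{\Z_p}Q^i=(\# G)\sum_i(-1)^i r_i$. On the other hand, base-changing a free $\Z_p[G]$-resolution of $\Z_p$ along $\Z_p[G]\to\Lambda[G]$ yields a free $\Lambda[G]$-resolution of $\Lambda$, which identifies $\ol C=\Lambda\otimesL_{\Lambda[G]}C$ with $\Z_p\otimesL_{\Z_p[G]}C\simeq\Z_p\otimes_{\Z_p[G]}Q^\bullet$, a complex with terms $\Z_p^{r_i}$; hence $\lambda(\ol C)=\sum_i(-1)^i r_i$, and $\lambda(C)=(\# G)\lambda(\ol C)$ follows.

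Concerning where the difficulty sits: the final counting is just the complex-level shadow of Proposition~\ref{prop:alg1}(2) --- once $C$ is perfect over $\Z_p[G]$, the terms of $Q^\bullet$ are free, hence of projective dimension $\le 1$, over $\Z_p[G]$ --- and the two ``$\mu=0$'' implications become clean once the generic-fibre criterion is in place. The one genuinely new ingredient beyond the module case is the passage from ``perfect over $\Lambda[G]$ with $\mu=0$'' to ``perfect over $\Z_p[G]$''; everything else is formal manipulation of derived tensor products and Euler characteristics.
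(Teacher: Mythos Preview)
Your proof is correct and takes a genuinely different route from the paper's. The paper argues by induction on the length of a representing complex: at each step it peels off a two-term complex $C'=[C^b\xrightarrow{f}C^b]$ built from an annihilator $f$ of $H^b(C)$ with $\mu(\Lambda/(f))=0$, applies Proposition~\ref{prop:alg1} to handle the base case, and uses the triangle $C'\to C\to C''$ together with Lemmas~\ref{lem:Ser} and~\ref{lem:alg2} to descend to a shorter complex. By contrast, your argument decouples the $\mu$- and $\lambda$-assertions entirely: for $\mu$, you pass to $L=\mathrm{Frac}(\F_p[[T]])$ and invoke derived Nakayama over the local Artinian ring $L[G]$; for $\lambda$, you upgrade $C$ to a perfect complex over $\Z_p[G]$ via the finite-Tor-dimension criterion (using that $\Lambda[G]\cong\Z_p[G][[T]]$ is flat over the Noetherian ring $\Z_p[G]$), after which both sides are ordinary Euler characteristics of bounded complexes of free $\Z_p$-modules. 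The paper's approach is more self-contained---it needs nothing beyond Proposition~\ref{prop:alg1} and elementary manipulation of triangles---while yours is more structural and makes transparent \emph{why} the identity holds: it is simply invariance of an Euler characteristic under base change, a viewpoint the paper itself records in the remark following Theorem~\ref{thm:Kida_algA} but does not exploit in the proof. Your derived-Nakayama argument for the $\mu$-equivalence is also pleasantly uniform and would adapt with little change to the $p$-adic Lie setting of \S\ref{sec:Lie}.
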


\begin{proof}
Choose a quasi-isomorphism
\[
C \simeq [C^a \to C^{a+1} \to \dots \to C^b],
\]
where $C^a, \dots, C^b$ are finitely generated projective (i.e., free) $\Lambda[G]$-modules.
Since $\ol{H^b(C)} \simeq H^b(\ol{C})$, by Proposition \ref{prop:alg1}(1) applied to $M = H^b(C)$, we have $\mu(H^b(C)) = 0$ if and only if $\mu(H^b(\ol{C})) = 0$.

We argue by induction on the length $b - a$.
Firstly suppose $b - a \leq 1$, so we may write $C \simeq [C^{b-1} \to C^b]$.
We may assume $\mu = 0$ for $C$ or $\ol{C}$, so in particular $C \in \DPT(\Lambda[G])$ or $\ol{C} \in \DPT(\Lambda)$.
In any case, the $\Lambda[G]$-ranks of $C^{b-1}$ and $C^b$ coincide.
Also, we have $\mu = 0$ for both $H^b(C)$ and $H^b(\ol{C})$, so in particular both are torsion.
Therefore, both $H^{b-1}(C)$ and $H^{b-1}(\ol{C})$ vanish and we obtain
\[
\lambda(C) = (-1)^b \lambda(H^b(C)),
\qquad
\lambda(\ol{C}) = (-1)^b \lambda(H^b(\ol{C})).
\]
Then the theorem follows from Proposition \ref{prop:alg1}(2), applied to $M = H^b(C)$.

Suppose $b - a \geq 2$.
We have both $\mu(H^b(C)) = 0$ and $\mu(H^b(\ol{C})) = 0$ as above, so we can take a nonzero annihilator $f \in \Lambda$ of $H^b(C)$ such that $\mu(\Lambda/(f)) = 0$.
Consider a commutative diagram
\[
\xymatrix{
	& & C^b \ar[r]^{f \times} \ar[d]_{*}
	& C^b \ar[d]^{=} \\
	C^a \ar[r]
	 & \cdots \ar[r]
	 & C^{b-1} \ar[r]
	 & C^b.
}
\]
Here, the map $*$ exists since $C^b$ is projective and $f$ annihilates $H^b(C)$.
We regard this diagram as a morphism from the complex $C' := [C^b \overset{f \times}{\to} C^b]$ to $C$, where $C'$ is located at degrees $b-1, b$.
Let $C''$ be its cone, so we have a triangle $C' \to C \to C''$.
By base-change, we also obtain a triangle $\ol{C'} \to \ol{C} \to \ol{C''}$.

By construction, $C''$ is located at degrees $a, a+1, \dots, b$ (since $b - a \geq 2$), and moreover we have $H^b(C'') = 0$.
Therefore, $C''$ can be indeed represented by a complex of finitely generated projective modules at degrees $a, a+1, \dots, b - 1$.
So we may apply the induction hypothesis to $C''$.

By the choice of $f$, we have $\mu = 0$ for $C'$ and $\ol{C'}$.
Then by Lemma \ref{lem:Ser}, we have the first and the third equivalences of
\[
\text{$\mu = 0$ for $C$}
 \Leftrightarrow \text{$\mu = 0$ for $C''$}
 \Leftrightarrow \text{$\mu = 0$ for $\ol{C''}$}
 \Leftrightarrow \text{$\mu = 0$ for $\ol{C}$}.
\]
The second equivalence holds by the induction hypothesis.
Thus we have proved the assertion concerning $\mu = 0$.

Suppose $\mu = 0$.
Then we have
\begin{align}
\lambda(C) 
& = \lambda(C') + \lambda(C'')
 = (\# G) \lambda(\ol{C'}) + (\# G) \lambda(\ol{C''})\\
& = (\# G) (\lambda(\ol{C'}) + \lambda(\ol{C''}))
 = (\# G) \lambda(\ol{C}).
\end{align}
Here, the first and the last equality follows from Lemma \ref{lem:alg2}.
The second equality follows from the induction hypothesis (and the $b - a = 1$ case).
This proves the assertion concerning the $\lambda$-invariants.
\end{proof}

%%%%%%%%%%%%%%%%%%%%%%
\section{Application to ordinary representations}\label{sec:pf_thm}
%%%%%%%%%%%%%%%%%%%%%%

In this section, we apply the established key algebraic theorem to the Selmer complex associated to ordinary representations.

%%%%%%%%%%%%%%%%%%%%%%
\subsection{Statement of the theorem}\label{ss:state_thm}
%%%%%%%%%%%%%%%%%%%%%%

We begin with the precise setup.
Let $p$ be an odd prime number.
Let $k$ be a number field.
Let $T$ be a free Galois representation over $\Z_p$ of the absolute Galois group $\Gal(\ol{k}/k)$ that is unramified at almost all primes.
Fundamental examples of $T$ are the Tate modules $\Z_p(1)$ (of the multiplicative group $\bG_m$) and $T_pE$ (of an elliptic curve $E$ over $k$).

Suppose that, for each finite prime $v$ of $k$, we are given a short exact sequence of free representations of $\Gal(\ol{k_v}/k_v)$
\[
0 \to T_v^+ \to T \to T_v^- \to 0
\]
such that we have $T_v^+ = T$ for almost all primes $v$.
In the applications we often take $T_v^+ = T$ for all non-$p$-adic primes $v$.
Let us write $\cT^+$ to mean this data $\{T_v^+\}_v$.

Put $A = T^{\vee}(1)$ and $A_v^{\pm} = (T_v^{\mp})^{\vee}(1)$, where $(-)^{\vee}$ denotes the Pontryagin dual and $(1)$ denotes the Tate twist.
Then we have an exact sequence
\[
0 \to A_v^+ \to A \to A_v^- \to 0
\]
for each finite prime $v$.

Let $S_p(k)$, $S_{\R}(k)$, and $S_{\C}(k)$ denote the set of $p$-adic, real, and complex places of $k$, respectively.
Put $S_{\infty}(k) = S_{\R}(k) \cup S_{\C}(k)$.

We impose the following assumption as a part of ordinarity.
As will be mentioned in \S \ref{ss:Sel_cpx}, it is equivalent to that the Euler characteristic of the associated Selmer complex is zero.

\begin{ass}\label{ass:Sel_rk}
We have
\[
\sum_{v \in S_p(k)} [k_v: \Q_p] \rank_{\Z_p}(T_v^-) = \sum_{v \in S_{\R}(k)} \rank_{\Z_p}(T^{j_v = -1}) + \sum_{v \in S_{\C}(k)} \rank_{\Z_p}(T).
\]
Here, for $v \in S_{\R}(k)$, we write $j_v$ for the complex conjugation at $v$.
\end{ass}

Next, we take a $\Z_p$-extension  $k_{\infty}/k$ satisfying the following.

\begin{ass}\label{ass:spl_fin}
Any finite primes split finitely in $k_{\infty}/k$ (i.e., do not split completely).
\end{ass}

This is satisfied if $k_{\infty}/k$ is the cyclotomic $\Z_p$-extension.

Let $K_{\infty}/k_{\infty}$ be a finite $p$-extension.
We define the Selmer group by
\[
\Sel_{\cT^+}(A/K_{\infty})
= \Ker \left(
H^1(K_{\infty}, A)
\to \bigoplus_{v} H^1(K_{\infty, v}, A_v^-)
\right),
\]
where $v$ runs over all finite primes of $k$ and the map is the natural one.
Here, in general we write
\[
H^1(K_{\infty, v}, -) = \bigoplus_{w \mid v} H^1(K_{\infty, w}, -)
\]
with $w$ running over the primes of $K_{\infty}$ lying above $v$.

We set
\[
\delta_{\cT^+}(T/K_{\infty}) 
= \lambda \left(\Ker \left( H^0(K_{\infty}, T) \to \bigoplus_v H^0(K_{\infty, v}, T_v^-) \right) \right),
\]
where $v$ runs over all finite primes of $k$.
Note that we often have $\delta_{\cT^+}(T/K_{\infty}) = 0$; for instance, this holds as soon as $H^0(K_{\infty}, T) = 0$ or there is a prime $v$ such that $T_v^- = T$ (i.e., $T_v^+ = 0$).
Indeed, in the applications in this paper, we have $\delta_{\cT^+}(T/K_{\infty}) = 0$, except for \S \ref{ss:Iw2}.

The main theorem in this section is the following.

\begin{thm}\label{thm:main_ord}
Suppose Assumptions \ref{ass:Sel_rk} and \ref{ass:spl_fin}.
We have $\mu(\Sel_{\cT^+}(A/K_{\infty})^{\vee}) = 0$ if and only if $\mu(\Sel_{\cT^+}(A/k_{\infty})^{\vee}) = 0$.
If these equivalent conditions hold, then we have
\begin{align}
& \lambda(\Sel_{\cT^+}(A/K_{\infty})^{\vee})
	- \lambda(H^0(K_{\infty}, A)^{\vee})
	- \delta_{\cT^+}(T/K_{\infty})\\
& \qquad = [K_{\infty}: k_{\infty}] \left[\lambda(\Sel_{\cT^+}(A/k_{\infty})^{\vee})
	- \lambda(H^0(k_{\infty}, A)^{\vee})
	- \delta_{\cT^+}(T/k_{\infty}) \right]\\
& \qquad \quad + \sum_{w} \left[ n_w(K_{\infty}/k_{\infty}) \lambda(H^0(k_{\infty, w}, A_v^-)^{\vee}) - \lambda(H^0(K_{\infty, w}, A_v^-)^{\vee}) \right],
\end{align}
where $w$ runs over the finite primes of $K_{\infty}$.
For each $w$, the $v$ in $A_v^-$ is the prime of $k$ lying below $w$, and $k_{\infty, w}$ denotes the completion at the prime below $w$.
Also, we put
\[
n_{w}(K_{\infty}/k_{\infty}) := e_{w}(K_{\infty}/k_{\infty}) f_{w}(K_{\infty}/k_{\infty})
\]
by using the ramification index $e_{w}(K_{\infty}/k_{\infty})$ and the inertia degree $f_{w}(K_{\infty}/k_{\infty})$.
\end{thm}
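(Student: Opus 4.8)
The plan is to deduce Theorem \ref{thm:main_ord} from the algebraic Theorem \ref{thm:Kida_alg} by realizing the Pontryagin dual of the Selmer group as a piece of an arithmetic Selmer complex over the group ring $\Lambda[G]$, where $\Lambda = \Z_p[[\Gal(k_\infty/k)]]$ and $G = \Gal(K_\infty/k_\infty)$. First I would introduce the Selmer complex $C = C_{\cT^+}(T/K_\infty)$ à la Nekovář, defined as the mapping fibre of the localization map from the complex $\RG(G_{k,S}, T \otimes \Lambda_{\Iw})$ computing Iwasawa cohomology to the sum over finite $v$ of local complexes $\RG(k_v, T_v^- \otimes \Lambda_{\Iw})$, with the archimedean places handled appropriately. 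The key structural input is that this complex lies in $\DP(\Lambda[G])$: the global and local Galois cohomology complexes are perfect over the relevant Iwasawa algebras (finiteness of $S$, $p$-cohomological dimension $2$ of the relevant Galois groups, plus the fact that $K_\infty/K$ is a $\Z_p$-extension so the Iwasawa algebra is a regular local ring of dimension $2$), and perfectness is preserved under taking mapping fibres. Here Assumption \ref{ass:Sel_rk} is exactly what forces the Euler characteristic of $C$ to vanish, which is the shape of perfectness needed for the bookkeeping to come out right; I would state this as a lemma in \S\ref{ss:Sel_cpx} as the excerpt already promises.

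Next I would compute the cohomology of $C$ and of $\ol{C} = \Lambda \otimesL_{\Lambda[G]} C$. By Shapiro's lemma the base change $\ol{C}$ is the analogous Selmer complex for $k_\infty$, so it suffices to analyze $C$ and substitute $k_\infty$ for $K_\infty$ at the end. Using the Poitou--Tate / Greenberg-style description together with Assumption \ref{ass:spl_fin} (which guarantees the local $H^2$ terms and the global $H^2$ over the Iwasawa algebra behave well, and in particular that the finite primes contribute no extra $\Z_p$-corank), I would identify: $H^1(C)$ with (a module closely related to) $\Sel_{\cT^+}(A/K_\infty)^\vee$ up to the $H^0$ correction terms that appear in the statement; $H^0(C)$ with a submodule built from $H^0(K_\infty,T)$ and the local $H^0(K_{\infty,v},T_v^-)$, whose $\lambda$-invariant is precisely $\delta_{\cT^+}(T/K_\infty)$; and $H^2(C)$ (or $H^0(K_\infty,A)^\vee$-type contributions) accounting for the remaining terms. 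Taking the alternating sum $\lambda(C) = \sum_i (-1)^i \lambda(H^i(C))$ and carefully matching each cohomological term with a term in the displayed formula is the heart of the computation; the local terms $\sum_w [n_w \lambda(H^0(k_{\infty,w},A_v^-)^\vee) - \lambda(H^0(K_{\infty,w},A_v^-)^\vee)]$ arise from comparing the local complexes over $K_\infty$ with those over $k_\infty$ after base change, using that $H^0(K_{\infty,w}, T_v^-)^\vee \otimes_{\Z_p[G_w]} \Z_p$ recovers the $k_\infty$ local term weighted by $n_w = e_w f_w$.

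Finally, Theorem \ref{thm:Kida_alg} applied to $C \in \DP(\Lambda[G])$ gives that $\mu = 0$ for $C$ iff $\mu = 0$ for $\ol{C}$, and under that hypothesis $\lambda(C) = (\#G)\,\lambda(\ol{C}) = [K_\infty:k_\infty]\,\lambda(\ol{C})$. Translating the $\mu = 0$ condition on $C$ into $\mu(\Sel_{\cT^+}(A/K_\infty)^\vee) = 0$ requires knowing that the auxiliary cohomology groups $H^0(K_\infty, A)^\vee$ and the local $H^0$ terms always satisfy $\mu = 0$ (they are finitely generated over $\Z_p$, even over $\Z_p[G]$, since they are built from $\Z_p$-lattices and their duals), so that $\mu = 0$ for $C$ is equivalent to $\mu = 0$ for $H^1(C)$ alone, hence for the Selmer dual; the same remark lets one pass between $\lambda(C)$ and the $\lambda$ of the Selmer dual at the level of the displayed equation. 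I expect the main obstacle to be the second paragraph: setting up the Selmer complex with the correct local conditions at $p$-adic, real, and complex places, verifying perfectness and the Euler-characteristic-zero property from Assumption \ref{ass:Sel_rk}, and then pinning down the cohomology precisely enough that the $H^0$-correction and local terms in the theorem appear with exactly the stated coefficients. Once the complex is in hand, the extension-theoretic content is entirely absorbed into Theorem \ref{thm:Kida_alg}, with no reduction to the cyclic-of-order-$p$ case needed.
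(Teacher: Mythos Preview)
Your overall strategy matches the paper's exactly: build the Nekov\'{a}\v{r}-style Selmer complex $C$, verify it is perfect over $\Lambda[G]$ with Euler characteristic zero (from Assumption~\ref{ass:Sel_rk}), identify $\ol{C}$ with the Selmer complex over $k_\infty$, and then apply Theorem~\ref{thm:Kida_alg}. That is precisely what \S\ref{ss:Sel_cpx}--\S\ref{ss:pf_ord_thm} do.

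However, your cohomological bookkeeping is off in a way that hides one non-trivial step. In the paper's normalization $C$ lives in degrees $[0,3]$ with $H^0(C)=0$ (because $H^0_{\Iw}=0$ under Assumption~\ref{ass:spl_fin}); the Selmer dual sits inside $H^2(C)$, and $H^3(C)$ is a quotient of $H^0(K_\infty,A)^\vee$ (Proposition~\ref{prop:C_coh}). Your placement of $\Sel^\vee$ in $H^1$ and the $\delta$-term in $H^0$ is a degree shift, which is harmless in itself, but it leads you to say that the $\delta$-contribution is literally an $H^0$-object ``built from $H^0(K_\infty,T)$ and the local $H^0(K_{\infty,v},T_v^-)$''. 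In fact $\delta_{\cT^+}(T/K_\infty)$ is $\lambda$ of the \emph{torsion} submodule of $H^1(C)$, and the identification $H^1_{\Iw}(k_S/K_\infty,T)_{\tors}\simeq H^0(K_\infty,T)$ (hence Lemma~\ref{lem:main_pf1}) is a genuine lemma, not a tautology. You should isolate and prove this; without it you cannot read off $\lambda(H^1(C))=\delta_{\cT^+}(T/K_\infty)$.

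A second point you gloss over is the $\mu=0$ equivalence. Knowing that the auxiliary cohomology groups (local $H^0$'s, $H^0(K_\infty,A)^\vee$) have $\mu=0$ is not enough: you must also show $H^1(C)$ is \emph{torsion} before Lemma~\ref{lem:main_pf1} applies. The paper uses the vanishing of the Euler characteristic here: once $H^2(C)$ and $H^3(C)$ are torsion, property~(iii) forces $H^1(C)$ torsion. You mention the Euler characteristic only as ``the shape of perfectness needed'', but this is where it actually enters. Finally, the local error terms arise not from a direct comparison of local complexes but from the exact sequence \eqref{eq:Sel3} involving $H^0(K_{\infty,v},A_v^-)^\vee$ (note $A_v^-$, not $T_v^-$); the factor $n_w$ appears only at the very end when one rewrites $[K_\infty:k_\infty]\cdot\lambda(H^0(k_{\infty,v},A_v^-)^\vee)$ as a sum over primes $w\mid v$ of $K_\infty$.
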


Note that $n_{w}(K_{\infty}/k_{\infty}) = e_{w}(K_{\infty}/k_{\infty})$ if $w \nmid p$, since by Assumption \ref{ass:spl_fin} the extension $k_{\infty}/k$ already involves the inert $\Z_p$-extensions for non-$p$-adic primes.

%%%%%%%%%%%%%%%%%%%%%%
\subsection{The Selmer complex}\label{ss:Sel_cpx}
%%%%%%%%%%%%%%%%%%%%%%

In this subsection, as a preliminary to the proof of Theorem \ref{thm:main_ord}, we introduce the Selmer complex and review its basic properties.

We keep the notation in \S \ref{ss:state_thm}.
Let us take a finite set $S$ of places of $k$ such that
\[
S \supset S_{\infty}(k) \cup S_p(k) \cup S_{\ram}(K_{\infty}/k) \cup S_{\ram}(T),
\]
where $S_{\ram}(K_{\infty}/k)$ and $S_{\ram}(T)$ denote the sets of ramified primes of $k$ for the extension $K_{\infty}/k$ and the representation $T$, respectively.
We moreover enlarge $S$ if necessary so that we have $T_v^+ = T$ for any $v \not \in S$.
We set $S_f = S \setminus S_{\infty}(k)$.

Let $k_S/k$ be the maximal extension of $k$ that is unramified outside $S$.
We consider the global and local Iwasawa cohomology complexes
\[
\RG_{\Iw}(k_S/K_{\infty}, -)
\text{ and }
\RG_{\Iw}(K_{\infty, v}, -) \quad (v \in S_f).
\]
For the basic properties of these complexes, see the author's paper \cite[\S 3]{Kata_12}, or Fukaya--Kato \cite[\S 1.6]{FK06} and Nekov\'{a}\v{r} \cite{Nek06} as more comprehensive references (\cite{FK06} deals with non-commutative cases).

The Selmer complex $C$ in the current situation (see \cite[\S 6.1]{Nek06}) is defined so that it satisfies a triangle
\begin{equation}\label{eq:Sel1}
C 
\to \RG_{\Iw}(k_S/K_{\infty}, T)
\to \bigoplus_{v \in S_f} \RG_{\Iw}(K_{\infty, v}, T_v^-),
\end{equation}
where the morphism between the global and local Iwasawa cohomology complexes is the localization.
Here we summarize properties of $C$.
Note that by changing $k$ if necessary (see the beginning of \S \ref{ss:pf_ord_thm}), we may suppose that $K_{\infty}/k$ is Galois and set $\cR_{K_{\infty}} = \Z_p[[\Gal(K_{\infty}/k)]]$.

\begin{itemize}
\item[(i)]
The complex $C$ is perfect over $\cR_{K_{\infty}}$.
More precisely, $C$ is quasi-isomorphic to a complex of the form $[C^0 \to C^1 \to C^2 \to C^3]$ with finitely generated projective $\cR_{K_{\infty}}$-modules $C^i$ located at degree $i$.
This follows from the fact that the global and local Iwasawa cohomology complexes are perfect (see \cite[Proposition 1.6.5(2)]{FK06}).

\item[(ii)]
Putting $\ol{C} = \cR_{k_{\infty}} \otimesL_{\cR_{K_{\infty}}} C$ with $\cR_{k_{\infty}} = \Z_p[[\Gal(k_{\infty}/k)]]$, we have a triangle
\[
\ol{C}
\to \RG_{\Iw}(k_S/k_{\infty}, T)
\to \bigoplus_{v \in S_f} \RG_{\Iw}(k_{\infty, v}, T_v^-)
\]
over $\cR_{k_{\infty}}$.
This follows from the corresponding compatibility about the global and local Iwasawa cohomology complexes (see \cite[Proposition 1.6.5(3)]{FK06}).

\item[(iii)]
The Euler characteristic of $C$ is zero.
This follows from the global and local Euler--Poincare characteristic formulas and Assumption \ref{ass:Sel_rk}.
Indeed, the Euler characteristic of $\RG_{\Iw}(k_S/K_{\infty}, T)$ (resp.~$\bigoplus_{v \in S_f} \RG_{\Iw}(K_{\infty, v}, T_v^-)$) is the right (resp.~left) hand side of the equation in Assumption \ref{ass:Sel_rk} (see \cite[Theorems (7.3.1) and (8.7.4)]{NSW08}).

\item[(iv)]
We have a triangle
\[
C
\to \bigoplus_{v \in S_f} \RG_{\Iw}(K_{\infty, v}, T_v^+)
\to \RG(k_S/K_{\infty}, A)^{\vee}[-2].
\]
This follows from the complex version of the global duality (see \cite[\S 5.4]{Nek06})
\[
\RG_{\Iw}(k_S/K_{\infty}, T)
\to \bigoplus_{v \in S_f} \RG_{\Iw}(K_{\infty, v}, T)
\to \RG(k_S/K_{\infty}, A)^{\vee}[-2].
\]
\end{itemize}

The cohomology groups of $C$ are described as follows.

\begin{prop}\label{prop:C_coh}
We have $H^i(C) = 0$ for $i \neq 1, 2, 3$
and exact sequences
\begin{equation}\label{eq:Sel2}
0 \to H^1(C) \to H^1_{\Iw}(k_S/K_{\infty}, T)
\to \bigoplus_{v \in S_f} H^1_{\Iw}(K_{\infty, v}, T_v^-)
\end{equation}
and
\begin{equation}\label{eq:Sel3}
0 
\to \Sel_{\cT^+}(A/K_{\infty})^{\vee} 
\to H^2(C) 
\to \bigoplus_{v \in S_f} H^0(K_{\infty, v}, A_v^-)^{\vee}
 \to H^0(K_{\infty}, A)^{\vee} 
\to H^3(C) \to 0.
\end{equation}
\end{prop}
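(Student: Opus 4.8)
The strategy is to read everything off the two available triangles --- the defining triangle \eqref{eq:Sel1} and the duality triangle (iv) --- using the amplitude of $C$ recorded in property (i), local Iwasawa--Tate duality, and the vanishing of degree-zero Iwasawa cohomology that Assumption \ref{ass:spl_fin} guarantees. Recall first that, for every finite prime $v$ of $k$ and every free $\Z_p$-representation $M$, one has $H^0_{\Iw}(K_{\infty,v},M)=0$ and $H^0_{\Iw}(k_S/K_\infty,T)=0$: these are the inverse limits $\varprojlim_n M^{\Gal(\ol{k_v}/K_{n,v})}$ and $\varprojlim_n T^{\Gal(\ol{k}/K_n)}$ along norm maps, which are eventually divisible by arbitrarily large powers of $p$ because the relevant local and global extensions of the $K_n$ are infinite by Assumption \ref{ass:spl_fin}. (The local statement also follows from $\operatorname{cd}_p(K_{\infty,v})\le 1$.) Moreover the global and local Iwasawa complexes are concentrated in degrees $0,1,2$ since $p$ is odd.

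I would first obtain the vanishing $H^i(C)=0$ for $i\neq1,2,3$ together with \eqref{eq:Sel2} directly from \eqref{eq:Sel1}. Property (i) already gives $H^i(C)=0$ for $i<0$ and $i>3$. In degree $0$ the long exact cohomology sequence of \eqref{eq:Sel1} reads $0\to H^0(C)\to H^0_{\Iw}(k_S/K_\infty,T)=0$, so $H^0(C)=0$; in degree $1$ it reads
\[
0=\bigoplus_{v\in S_f}H^0_{\Iw}(K_{\infty,v},T_v^-)\ \to\ H^1(C)\ \to\ H^1_{\Iw}(k_S/K_\infty,T)\ \to\ \bigoplus_{v\in S_f}H^1_{\Iw}(K_{\infty,v},T_v^-),
\]
which is precisely \eqref{eq:Sel2}.

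For \eqref{eq:Sel3} I would use the duality triangle (iv), $C\to P\to Q$ with $P=\bigoplus_{v\in S_f}\RG_{\Iw}(K_{\infty,v},T_v^+)$ and $Q=\RG(k_S/K_\infty,A)^\vee[-2]$. Local Iwasawa--Tate duality gives $H^i_{\Iw}(K_{\infty,v},T_v^+)\cong H^{2-i}(K_{\infty,v},A_v^-)^\vee$ (using $(T_v^+)^\vee(1)=A_v^-$), so $H^0(P)=0$, $H^1(P)=\bigoplus_{v\in S_f}H^1(K_{\infty,v},A_v^-)^\vee$, $H^2(P)=\bigoplus_{v\in S_f}H^0(K_{\infty,v},A_v^-)^\vee$, and $H^i(P)=0$ otherwise; similarly $H^1(Q)=H^1(k_S/K_\infty,A)^\vee$, $H^2(Q)=H^0(k_S/K_\infty,A)^\vee=H^0(K_\infty,A)^\vee$ and $H^3(Q)=0$. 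The long exact sequence of (iv) in degrees $1,2,3$ therefore produces an exact sequence
\[
H^1(P)\ \xrightarrow{\ \beta\ }\ H^1(k_S/K_\infty,A)^\vee\ \to\ H^2(C)\ \to\ \bigoplus_{v\in S_f}H^0(K_{\infty,v},A_v^-)^\vee\ \to\ H^0(K_\infty,A)^\vee\ \to\ H^3(C)\ \to\ 0 ,
\]
and it remains only to identify $\Coker(\beta)$ --- equivalently the kernel of $H^2(C)\to\bigoplus_{v}H^0(K_{\infty,v},A_v^-)^\vee$ --- with $\Sel_{\cT^+}(A/K_\infty)^\vee$, and then splice.

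This last identification is the real content. Unwinding how triangle (iv) is built --- by combining the global duality triangle for $T$ with the local exact sequences $0\to T_v^+\to T\to T_v^-\to 0$ --- one sees that, under the local and global duality isomorphisms, $\beta$ is the Pontryagin dual of the arithmetic localization map $H^1(k_S/K_\infty,A)\to\bigoplus_{v\in S_f}H^1(K_{\infty,v},A_v^-)$. Since $A_v^-=0$ for $v\notin S$ (so the remaining Selmer conditions are vacuous) and $H^1(k_S/K_\infty,A)=H^1(K_\infty,A)$ --- a standard fact, as $A$ is unramified outside $S$ and no prime splits completely, so the local extension at any prime outside $S$ is unramified of infinite degree and carries no $p$-primary ramified classes --- the Selmer group $\Sel_{\cT^+}(A/K_\infty)$ is exactly the kernel of that localization map, and dualizing a kernel into a cokernel gives $\Coker(\beta)\cong\Sel_{\cT^+}(A/K_\infty)^\vee$. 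Splicing this into the displayed sequence yields \eqref{eq:Sel3}; everything else is routine diagram-chasing. The one step needing genuine care is the compatibility of Nekov\'{a}\v{r}'s abstract duality maps (\cite{Nek06}) with the arithmetic localization, which I would extract from the functoriality of the global duality triangle in the coefficients.
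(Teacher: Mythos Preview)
Your approach is essentially identical to the paper's: both proofs read \eqref{eq:Sel2} off the long exact sequence of the defining triangle \eqref{eq:Sel1} together with the vanishing of $H^0_{\Iw}$ guaranteed by Assumption \ref{ass:spl_fin}, and obtain \eqref{eq:Sel3} from triangle (iv) combined with local Tate duality and the identification of the Selmer group with the kernel of the $S$-truncated localization map out of $H^1(k_S/K_\infty,A)$. Your write-up is in fact more detailed than the paper's, which simply records that last identification ``thanks to Assumption \ref{ass:spl_fin}.''

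There is, however, a slip in your justification of that identification. You write ``$A_v^-=0$ for $v\notin S$,'' but in fact $T_v^+=T$ gives $T_v^-=0$, hence $A_v^+=(T_v^-)^\vee(1)=0$ and $A_v^-=(T_v^+)^\vee(1)=A$. So the Selmer condition at $v\notin S$ is \emph{not} vacuous by definition: it demands that the class restrict to zero in $H^1(K_{\infty,v},A)$. You also assert $H^1(k_S/K_\infty,A)=H^1(K_\infty,A)$, which is not true in general (there can be classes ramified outside $S$). The correct two-step argument is the following. First, any class coming from $H^1(k_S/K_\infty,A)$ is unramified at each $v\notin S$; your own observation that $K_{\infty,w}$ contains the unramified $\Z_p$-extension of $k_v$ (so $\Gal(K_{\infty,w}^{\ur}/K_{\infty,w})$ has trivial pro-$p$ part) shows that $H^1_{\ur}(K_{\infty,w},A)=0$, whence the strict local condition at $v$ is automatically met. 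Second, conversely, a Selmer class is locally trivial at every $v\notin S$, hence in particular unramified there, and therefore lies in the image of $H^1(k_S/K_\infty,A)$. Thus your ingredient about ``no $p$-primary ramified classes'' is exactly what is needed --- it just has to be applied to the vanishing of the outside-$S$ local conditions rather than to the equality of the two global $H^1$'s.
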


\begin{proof}
By property (i), we have $H^i(C) = 0$ for $i \neq 0, 1, 2, 3$.
It is known that both $H^0_{\Iw}(k_S/K_{\infty}, T)$ and $H^0_{\Iw}(K_{\infty, v}, T_v^-)$ ($v \in S_f$) vanish by Assumption \ref{ass:spl_fin} (see Rubin \cite[Appendix B, Lemma 3.2]{Rub00}).
Then the definition of $C$ implies $H^0(C) = 0$ and \eqref{eq:Sel2}.
On the other hand, property (iv) and the local duality induce \eqref{eq:Sel3} since the Selmer group $\Sel_{\cT^+}(A/K_{\infty})$ coincides with the kernel of the natural homomorphism
\[
H^1(k_S/K_{\infty}, A)
\to \bigoplus_{v \in S_f} H^1(K_{\infty, v}, A_v^-),
\]
thanks to Assumption \ref{ass:spl_fin}.
\end{proof}

We note the following:
If there exists a finite prime $v$ of $K$ such that $T_v^- = T$ (i.e., $T_v^+ = 0$), then we have $H^1(C)_{\tors} = 0$, where $(-)_{\tors}$ denotes the maximal $\cR_{K_{\infty}}$-torsion submodule.
This easily follows by observing that we may take $C^0 = 0$ in property (i) in this case.
In general, we have the following description of $H^1(C)_{\tors}$.
The result should be known to experts (e.g., \cite[Proposition 8.11.5(ii), Corollary 9.1.7]{Nek06}), but we include a direct proof for convenience.

\begin{lem}\label{lem:main_pf1}
We have an exact sequence
\[
0 \to H^1(C)_{\tors} \to H^0(K_{\infty}, T)
\to \bigoplus_{v \in S_f} H^0(K_{\infty, v}, T_v^-).
\]
\end{lem}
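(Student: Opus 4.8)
The plan is to identify $H^1(C)_{\tors}$ with the kernel of the localization map on $H^0$-level, by exploiting the long exact cohomology sequence attached to the defining triangle \eqref{eq:Sel1} together with the known structure of the low-degree Iwasawa cohomology. Concretely, I would first recall from the proof of Proposition \ref{prop:C_coh} that $H^0_{\Iw}(k_S/K_{\infty}, T) = 0$ and $H^0_{\Iw}(K_{\infty,v}, T_v^-) = 0$ for all $v \in S_f$, thanks to Assumption \ref{ass:spl_fin} (Rubin \cite[Appendix B, Lemma 3.2]{Rub00}). Hence the long exact sequence of \eqref{eq:Sel1} begins
\[
0 \to H^1(C) \to H^1_{\Iw}(k_S/K_{\infty}, T) \xrightarrow{\;\mathrm{loc}\;} \bigoplus_{v \in S_f} H^1_{\Iw}(K_{\infty,v}, T_v^-),
\]
which is precisely \eqref{eq:Sel2}. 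So the task reduces to a purely $\Lambda$-module-theoretic computation: determine the torsion submodule of $\ker(\mathrm{loc})$.

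The key input is the standard description of the Iwasawa cohomology via descent. For a $\Z_p$-extension (or more generally the $p$-adic Lie extension $K_\infty/k$), Iwasawa cohomology is computed by $\varprojlim$ of finite-level cohomology, and there is a short exact sequence relating $H^1_{\Iw}(k_S/K_\infty, T)$ and $\varprojlim H^1(k_S/K_n, T)$; more to the point, the $\cR_{K_\infty}$-torsion submodule of $H^1_{\Iw}(k_S/K_\infty, T)$ is canonically $H^0(k_S/K_\infty, T) = H^0(K_\infty, T)$ (this is the complex-theoretic shadow of the fact that $\RG_{\Iw}$ in degree $1$ has torsion part coming from $H^0$ of the ``limit'' module; cf.~\cite[Proposition 8.11.5(ii), Corollary 9.1.7]{Nek06}). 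The analogous local statement gives $H^1_{\Iw}(K_{\infty,v}, T_v^-)_{\tors} \cong H^0(K_{\infty,v}, T_v^-)$. I would establish these two identifications directly: e.g., using $\RG_{\Iw}(k_S/K_\infty, T) \simeq \RHom_{\Z_p}(\RG(k_S/K_\infty, A), \Q_p/\Z_p)[-?]$-type dualities, or more elementarily by noting that $H^1_{\Iw}$ modulo torsion is reflexive, so the torsion part injects into $H^1$ of the relevant finite pieces, and a snake-lemma / Nakayama argument pins it down to $H^0$ of $T$ resp.~$T_v^-$.

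Granting those, the argument concludes formally: restricting the injection $H^1(C) \hookrightarrow H^1_{\Iw}(k_S/K_\infty, T)$ to torsion submodules gives $H^1(C)_{\tors} \hookrightarrow H^1_{\Iw}(k_S/K_\infty, T)_{\tors} \cong H^0(K_\infty, T)$, and chasing through the localization map shows an element of $H^0(K_\infty, T)$ lies in the image of $H^1(C)_{\tors}$ exactly when its localizations in $H^1_{\Iw}(K_{\infty,v}, T_v^-)_{\tors} \cong H^0(K_{\infty,v}, T_v^-)$ all vanish — the relevant square commutes because the torsion-identification is functorial in the localization morphism, and the global-to-local map on $H^0$ of $T$ factors as $H^0(K_\infty, T) \to H^0(K_{\infty,v}, T) \to H^0(K_{\infty,v}, T_v^-)$. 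Since $\mathrm{loc}$ is injective on the torsion-free quotient $H^1(C)/H^1(C)_{\tors}$ (it is torsion-free and the cokernel target is torsion-free in the relevant sense — or one argues directly that the full kernel $H^1(C)$ has torsion submodule equal to $\ker$ of the composite $H^1(C) \to \bigoplus H^1_{\Iw}(K_{\infty,v},T_v^-)_{\tors}$), we obtain the claimed exact sequence
\[
0 \to H^1(C)_{\tors} \to H^0(K_\infty, T) \to \bigoplus_{v \in S_f} H^0(K_{\infty,v}, T_v^-).
\]

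The main obstacle I anticipate is the clean identification $H^1_{\Iw}(k_S/K_\infty, T)_{\tors} \cong H^0(K_\infty, T)$ (and its local analogue), including its functoriality under localization: this is ``known to experts'' but needs a careful treatment — either by invoking the spectral sequence relating $\RG_{\Iw}$ and $\RG$ of finite levels, or by a duality argument. Everything downstream of that is diagram-chasing. A minor subtlety is checking that the torsion of $H^1(C)$ really is captured by the composite map to the torsion submodules of the local terms (equivalently, that $H^1(C)$ is torsion-free modulo this kernel), which follows because $H^1_{\Iw}(k_S/K_\infty,T)$ modulo $H^0(K_\infty,T)$ is torsion-free and the localization of a torsion element is torsion.
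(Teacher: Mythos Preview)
Your approach is essentially the same as the paper's: both reduce via \eqref{eq:Sel2} and left-exactness of $(-)_{\tors}$ to the functorial identifications $H^1_{\Iw}(k_S/K_\infty, T)_{\tors} \cong H^0(K_\infty, T)$ and $H^1_{\Iw}(K_{\infty,v}, T_v^-)_{\tors} \cong H^0(K_{\infty,v}, T_v^-)$. The only difference is that the paper supplies a direct hands-on proof of these identifications (computing $H^1_{\Iw}[f]$ for each nonzero $f \in \Lambda$ via Shapiro's lemma and a snake-lemma argument, then passing to the limit), whereas you defer to Nekov\'{a}\v{r} or sketch a reflexivity/descent argument; also note that your final worry about ``checking that the torsion of $H^1(C)$ really is captured'' is unnecessary, since left-exactness of $(-)_{\tors}$ applied to \eqref{eq:Sel2} gives exactness at the middle term automatically.
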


\begin{proof}
Since taking $(-)_{\tors}$ is left exact, by \eqref{eq:Sel2}, it is enough to show functorial isomorphisms 
$H^1_{\Iw}(k_S/K_{\infty}, T)_{\tors} \simeq H^0(K_{\infty}, T)$ and
$H^1_{\Iw}(K_{\infty, v}, T_v^-)_{\tors} \simeq H^0(K_{\infty, v}, T_v^-)$.
Let us show only the former one as the latter one can be shown in a similar way.

We write $\otimes$ to mean $\otimes_{\Z_p}$.
Let us take an intermediate number field $K$ in $K_{\infty}/k$ such that $K_{\infty} = k_{\infty} K$.
Put $\Gamma = \Gal(K_{\infty}/K)$ and put $\Lambda = \Z_p[[\Gamma]]$.
Since $\Gamma$ is an open subgroup of $\Gal(K_{\infty}/k)$, being $\Lambda$-torsion is equivalent to being $\cR_{K_{\infty}}$-torsion.
Recall that, by Shapiro's lemma, we have $H^i_{\Iw}(k_S/K_{\infty}, T) \simeq H^i(k_S/K, \Lambda \otimes T)$ if we regard $\Lambda \otimes T$ as a $\Lambda$-module with $\Gal(k_S/K)$-action defined by
\[
\sigma \cdot (a \otimes x) = (a \sigma^{-1}) \otimes (\sigma x)
\]
for $\sigma \in \Gal(k_S/K)$, $a \in \Lambda$, and $x \in T$.

Let $f \in \Lambda$ be an arbitrary nonzero element.
By taking the $\Gal(k_S/K)$-cohomology of the tautological exact sequence
\[
0 \to \Lambda \otimes T \overset{(f \times) \otimes (\id)}{\to} \Lambda \otimes T \to (\Lambda/(f)) \otimes T \to 0,
\] 
we obtain the first isomorphism of
\begin{align}
H^1_{\Iw}(k_S/K_{\infty}, T)[f] 
& \simeq H^0(K,( \Lambda/(f)) \otimes T)\\
& \simeq H^0(K_{\infty}/K, (\Lambda/(f)) \otimes M),
\end{align}
where we put $M = H^0(K_{\infty},T)$.

We take a generator $\gamma$ of $\Gamma$.
By the snake lemma applied to the multiplication by $\gamma - 1$ to the tautological exact sequence
\[
0 \to \Lambda \otimes M \overset{(f \times) \otimes (\id)}{\to} \Lambda \otimes M \to (\Lambda/(f)) \otimes M \to 0,
\] 
we obtain an exact sequence
\[
0 \to (\Lambda \otimes M)^{\Gamma}/f \to ((\Lambda/(f)) \otimes M)^{\Gamma} 
\to (\Lambda \otimes M)_{\Gamma}[f] \to 0.
\]
We have $(\Lambda \otimes M)^{\Gamma} = 0$ and an isomorphism $(\Lambda \otimes M)_{\Gamma} \simeq M$ by sending $a \otimes x$ to $ax$ for $a \in \Lambda$ and $x \in M$.
Thus we obtain $((\Lambda/(f)) \otimes M)^{\Gamma} \simeq M[f]$.

We have so far obtained an isomorphism $H^1_{\Iw}(k_S/K_{\infty}, T)[f] \simeq M[f]$.
By taking the inductive limit with respect to $f$, we obtain $H^1_{\Iw}(k_S/K_{\infty}, T)_{\tors} \simeq M = H^0(K_{\infty},T)$, as claimed.
\end{proof}

%%%%%%%%%%%%%%%%%%%%%%
\subsection{Proof of Theorem \ref{thm:main_ord}}\label{ss:pf_ord_thm}
%%%%%%%%%%%%%%%%%%%%%%

Now we are ready to prove Theorem \ref{thm:main_ord}.

Put $G = \Gal(K_{\infty}/k_{\infty})$.
Take an intermediate number field $K$ in $K_{\infty}/k$ such that $K_{\infty} = k_{\infty} K$.
By replacing $k$ by $K \cap k_{\infty}$, we may identify $\Gal(K_{\infty}/k)$ with $\Gal(K_{\infty}/K) \times G$, so
\[
\Z_p[[\Gal(K_{\infty}/k)]] \simeq \Z_p[[\Gal(K_{\infty}/K)]][G].
\]
We will apply the result in \S \ref{sec:alg} to $\Lambda = \Z_p[[\Gal(K_{\infty}/K)]]$.

As for $\mu = 0$, by \eqref{eq:Sel3} and Assumption \ref{ass:spl_fin},
we have $\mu(H^3(C)) = 0$ and $\mu(\Sel_{\cT^+}(A/K_{\infty})^{\vee}) = \mu(H^2(C))$.
We proceed in the following manner.
\[
\xymatrix{
	\text{$\mu = 0$ for $C$} \ar@{<=>}[r]^{(\rm a)} \ar@{<=>}[d]_{(\rm b)}
	& \text{$\mu = 0$ for $\ol{C}$} \ar@{<=>}[d]^{(\rm b')} \\
	\mu(H^2(C)) = 0 \ar@{<=>}[d]_{(\rm c)}
	& \mu(H^2(\ol{C})) = 0 \ar@{<=>}[d]^{(\rm c')}\\
	\mu(\Sel_{\cT^+}(A/K_{\infty})^{\vee}) = 0
	& \mu(\Sel_{\cT^+}(A/k_{\infty})^{\vee}) = 0
}
\]
Claim (a) follows from Theorem \ref{thm:Kida_alg}.
Claim (c) is already shown.
Let us show claim (b).
One implication is trivial, so we suppose $\mu(H^2(C)) = 0$ and will deduce $\mu(H^i(C)) = 0$ for all $i$.
We have $H^i(C) = 0$ for $i \neq 1, 2, 3$ and $\mu(H^3(C)) = 0$ is already shown.
In particular, both $H^2(C)$ and $H^3(C)$ are torsion, so property (iii) on the Euler characteristic implies that $H^1(C)$ is also torsion.
Then Lemma \ref{lem:main_pf1} implies $\mu(H^1(C)) = 0$.
This shows claim (b).
Finally, claims (b') and (c') follow in the same manner as (b) and (c) respectively, thanks to property (ii).

Now we assume $\mu = 0$ and consider the $\lambda$-invariants.
By Proposition \ref{prop:C_coh} and Lemma \ref{lem:main_pf1}, we obtain
\begin{align}
\lambda(C) 
& = \lambda(H^2(C)) - \lambda(H^3(C)) - \lambda(H^1(C))\\
& = \lambda(\Sel_{\cT^+}(A/K_{\infty})^{\vee})
	- \lambda(H^0(K_{\infty}, A)^{\vee})
	+ \sum_{v \in S_f} \lambda(H^0(K_{\infty, v}, A_v^-)^{\vee})
	- \delta_{\cT^+}(T/K_{\infty}).
\end{align}
We have the corresponding formula for $\ol{C}$ instead of $C$.
By Theorem \ref{thm:Kida_alg}, we have $\lambda(C) = [K_{\infty}: k_{\infty}] \lambda(\ol{C})$.
Combining these, we obtain
\begin{align}
& \lambda(\Sel_{\cT^+}(A/K_{\infty})^{\vee})
	- \lambda(H^0(K_{\infty}, A)^{\vee})
	- \delta_{\cT^+}(T/K_{\infty})\\
& = [K_{\infty}: k_{\infty}] \left[ \lambda(\Sel_{\cT^+}(A/k_{\infty})^{\vee})
	- \lambda(H^0(k_{\infty}, A)^{\vee})
	- \delta_{\cT^+}(T/k_{\infty}) \right]\\
& \quad + \sum_{v \in S_f} \left[ [K_{\infty}: k_{\infty}] \lambda(H^0(k_{\infty, v}, A_v^-)^{\vee}) - \lambda(H^0(K_{\infty, v}, A_v^-)^{\vee}) \right].
\end{align}
For each finite prime $v$ of $k$, it is easy to see
\begin{align}
& [K_{\infty}: k_{\infty}] \lambda(H^0(k_{\infty, v}, A_v^-)^{\vee}) - \lambda(H^0(K_{\infty, v}, A_v^-)^{\vee})\\
& \quad = \sum_{w \mid v} \left( n_w(K_{\infty}/k_{\infty}) \lambda(H^0(k_{\infty, w}, A_v^-)^{\vee}) - \lambda(H^0(K_{\infty, w}, A_v^-)^{\vee}) \right),
\end{align}
where $w$ runs over the primes of $K_{\infty}$ that lie above $v$.
This quantity vanishes for $v \not \in S_f$ since such a prime splits completely in $K_{\infty}/k_{\infty}$.
This completes the proof of Theorem \ref{thm:main_ord}.

%%%%%%%%%%%%%%%%%%%%%%
\section{Arithmetic applications}\label{sec:arith}
%%%%%%%%%%%%%%%%%%%%%%

In this section, we use Theorem \ref{thm:main_ord} to deduce concrete applications.
In \S \ref{ss:ell_ss}, we also deal with a non-ordinary case by applying Theorem \ref{thm:Kida_alg}.

%%%%%%%%%%%%%%%%%%%%%%
\subsection{Totally real fields}\label{ss:Iw}
%%%%%%%%%%%%%%%%%%%%%%

Let $K_{\infty}/k_{\infty}$ be a finite $p$-extension between the cyclotomic $\Z_p$-extensions of totally real fields.
Let $X_p(K_{\infty})$ denote the $p$-ramified Iwasawa module for $K_{\infty}$ (that is, ramification at $p$-adic primes is allowed).
The result is the following.
It is already known by Iwasawa \cite{Iwa81} (see also Neukirch--Schmidt--Wingberg \cite[Corollary (11.4.11)]{NSW08}).

\begin{thm}\label{thm:main_real}
We have $\mu(X_p(K_{\infty})) = 0$ if and only if $\mu(X_p(k_{\infty})) = 0$.
If these equivalent conditions hold, we have
\[
\lambda(X_p(K_{\infty})) - 1
= [K_{\infty}: k_{\infty}] \left( \lambda(X_p(k_{\infty})) - 1 \right) 
+ \sum_{w \nmid p} (e_w(K_{\infty}/k_{\infty}) - 1),
\]
where $w$ runs over the non-$p$-adic finite primes of $K_{\infty}$.
\end{thm}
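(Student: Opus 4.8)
The plan is to deduce Theorem~\ref{thm:main_real} from Theorem~\ref{thm:main_ord}, applied to the Tate module $T = \Z_p(1)$ of $\bG_m$ equipped with the ordinary data $\cT^+$ given by $T_v^+ = T$ for every non-$p$-adic prime $v$ and $T_v^+ = 0$ for every $v \in S_p(k)$. Then $A = T^\vee(1) = \Q_p/\Z_p$ with trivial Galois action, $A_v^- = 0$ for $v \mid p$, and $A_v^- = A$ for $v \nmid p$. First I would check the hypotheses: Assumption~\ref{ass:spl_fin} holds since $k_\infty/k$ is the cyclotomic $\Z_p$-extension, and Assumption~\ref{ass:Sel_rk} holds because $k$ is totally real and $p$ is odd --- indeed $S_{\C}(k) = \emptyset$, complex conjugation acts by $-1$ on $\Z_p(1)$ so $T^{j_v = -1} = T$, and both sides of the displayed identity equal $[k:\Q]$.

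The crucial step is to identify $\Sel_{\cT^+}(A/K_\infty)^\vee$ with $X_p(K_\infty)$. By class field theory $X_p(K_\infty)^\vee$ is the group of continuous homomorphisms from the absolute Galois group of $K_\infty$ to $\Q_p/\Z_p$ that are unramified outside the primes above $p$, that is $\Ker\bigl(H^1(K_\infty, A) \to \bigoplus_{w \nmid p} H^1(I_w, A)\bigr)$. On the other hand, with the chosen $\cT^+$ the local condition defining $\Sel_{\cT^+}(A/K_\infty)$ at a prime $w \mid v$ with $v \nmid p$ is that the localization vanishes in $H^1(K_{\infty,w}, A)$. These two conditions agree because $H^1_{\ur}(K_{\infty,w}, \Q_p/\Z_p) = 0$ for every $w \nmid p$: by Assumption~\ref{ass:spl_fin} the prime $v$ does not split completely in $k_\infty/k$, so $k_{\infty,w}/k_v$ is an unramified $\Z_p$-extension, and hence neither $k_{\infty,w}$ nor its finite extension $K_{\infty,w}$ admits a nontrivial unramified pro-$p$ extension. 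I would then carry out the routine comparison showing that a class in $H^1(k_S/K_\infty, A)$ is trivial at all $w \in S_f \setminus S_p$ if and only if it is unramified outside $S_p$, which yields $\Sel_{\cT^+}(A/K_\infty)^\vee \cong X_p(K_\infty)$ and likewise over $k_\infty$. The $\mu = 0$ part of Theorem~\ref{thm:main_ord} then gives the first assertion of Theorem~\ref{thm:main_real}.

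It remains to evaluate the correction terms in Theorem~\ref{thm:main_ord}. Since $p$ is odd and $K_\infty$ is totally real, it contains no nontrivial $p$-power root of unity, so $H^0(K_\infty, T) = \Z_p(1)^{G_{K_\infty}} = 0$ and therefore $\delta_{\cT^+}(T/K_\infty) = 0$; similarly $\delta_{\cT^+}(T/k_\infty) = 0$. As $A = \Q_p/\Z_p$ has trivial action, $H^0(K_\infty, A)^\vee \simeq \Z_p \simeq H^0(k_\infty, A)^\vee$, each with $\lambda$-invariant $1$, which produces the two ``$-1$'' terms. For $v \mid p$ one has $A_v^- = 0$, so such primes contribute nothing to the local sum in Theorem~\ref{thm:main_ord}. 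For $v \nmid p$ one has $A_v^- = \Q_p/\Z_p$ with trivial action, so $\lambda(H^0(k_{\infty,w}, A_v^-)^\vee) = \lambda(H^0(K_{\infty,w}, A_v^-)^\vee) = 1$, and $n_w(K_\infty/k_\infty) = e_w(K_\infty/k_\infty)$ for $w \nmid p$ by the remark after Theorem~\ref{thm:main_ord}; hence the local sum collapses to $\sum_{w \nmid p}\bigl(e_w(K_\infty/k_\infty) - 1\bigr)$. Substituting all of this into the formula of Theorem~\ref{thm:main_ord} gives the identity in Theorem~\ref{thm:main_real}.

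The main obstacle I expect is the identification of $\Sel_{\cT^+}(A/K_\infty)^\vee$ with $X_p(K_\infty)$: verifying $H^1_{\ur}(K_{\infty,w}, \Q_p/\Z_p) = 0$ for $w \nmid p$ and matching the $S$-ramified Selmer condition with the unramified condition defining the $p$-ramified Iwasawa module exactly, not merely up to finite groups. The remaining steps are routine bookkeeping.
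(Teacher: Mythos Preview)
Your proposal is correct and follows exactly the same approach as the paper's proof: apply Theorem~\ref{thm:main_ord} to $T=\Z_p(1)$ with $T_v^+=0$ for $v\mid p$ and $T_v^+=T$ otherwise, identify $\Sel_{\cT^+}(A/K_\infty)^\vee\simeq X_p(K_\infty)$, and read off the formula. You have in fact supplied more detail than the paper, which simply asserts that Assumption~\ref{ass:Sel_rk} holds, that the Selmer group identifies with $X_p(K_\infty)$, and that ``it is easy to see that Theorem~\ref{thm:main_ord} specializes to the theorem''; your verification of $H^1_{\ur}(K_{\infty,w},\Q_p/\Z_p)=0$ for $w\nmid p$ and your term-by-term evaluation of the correction terms are precisely what is needed to justify that sentence.
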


\begin{proof}
We apply Theorem \ref{thm:main_ord} to $T = \Z_p(1)$, so $A = \Q_p/\Z_p$ is a trivial representation.
Define the data $\cT^+$ by
\[
T_v^+ = 
\begin{cases}
	0 & \text{if $v \in S_p(k)$,}\\
	T & \text{otherwise.}
\end{cases}
\]
This satisfies Assumption \ref{ass:Sel_rk}.
By the definition of the Selmer group, we have
\[
\Sel_{\cT^+}(A/K_{\infty})^{\vee} \simeq X_p(K_{\infty}).
\]
It is easy to see that Theorem \ref{thm:main_ord} specializes to the theorem.
\end{proof}

\begin{rem}\label{rem:tot_p_ram}
\begin{itemize}
\item[(1)]
In this proof, we applied Theorem \ref{thm:Kida_alg} to the complex $C$ that fits in a triangle
\[
C
\to \RG_{\Iw}(k_S/K_{\infty}, \Z_p(1))
\to \bigoplus_{v \in S_p(k)} \RG_{\Iw}(K_{\infty, v}, \Z_p(1)),
\]
where $S \supset S_{\infty}(k) \cup S_p(k) \cup S_{\ram}(K_{\infty}/k)$.
This $C$ coincides with $C_S[-1]$ in \cite{GKK22}, which was also used to study the Iwasawa module $X_p(K_{\infty})$.

\item[(2)]
We may change the data $\cT^+$ so that
\[
T_v^+ = 
\begin{cases}
	0 & \text{if $v \in S_f$,}\\
	T & \text{otherwise.}
\end{cases}
\]
with $S$ as above.
In this case, the associated complex $C'$ is by definition the compact support cohomology complex.
Concretely, we have $C' = \RG(k_S/K_{\infty}, \Q_p/\Z_p)^{\vee}[-3]$ by the global duality, so its cohomology groups are $H^2(C') \simeq X_S(K_{\infty})$, $H^3(C') \simeq \Z_p$, and $H^i(C') = 0$ for $i \neq 2, 3$.
Then Theorem \ref{thm:main_ord} (or Theorem \ref{thm:Kida_alg}) implies $\mu(X_S(K_{\infty})) = 0 \Leftrightarrow \mu(X_S(k_{\infty})) = 0$ and, assuming these $\mu = 0$, we have
\[
\lambda(X_S(K_{\infty})) - 1 = [K_{\infty}: k_{\infty}] (\lambda(X_S(k_{\infty})) - 1).
\]
We can also deduce Theorem \ref{thm:main_real} from this.
It is enough to compare the $\lambda$-invariants of $X_S$ and $X_p$ by using the short exact sequence
\[
0 \to \bigoplus_{v \nmid p} \cU_{K_{\infty}, v} \to X_S(K_{\infty}) \to X_p(K_{\infty}) \to 0,
\]
where $\cU_{K_{\infty}, v}$ is the projective limit of the $p$-components of the local unit groups.
It is easy to describe its $\Z_p$-ranks, which results in the same formula as Theorem \ref{thm:main_real}.
\item[(3)]
The neat formula in (2) relating $\lambda(X_S(K_{\infty}))$ and $\lambda(X_S(k_{\infty}))$ is not surprising.
Indeed, taking the Kummer duality in mind, we see from \cite[page 276, Theorem 3 and the succeeding text]{Iwa81} that $\mu = 0$ holds for $X_p(k_{\infty})$ if and only if $\Gal(k_S/k_{\infty})$ is a free pro-$p$-group.
Then the formula is a consequence of the Schreier index formula on free groups (or one may apply \cite[page 250, Lemma 7]{Iwa81}).
\end{itemize}
\end{rem}

%%%%%%%%%%%%%%%%%%%%%%
\subsection{Minus components for CM-fields}\label{ss:Iw2}
%%%%%%%%%%%%%%%%%%%%%%

Let $p$ be an odd prime number.
Let $K_{\infty}/k_{\infty}$ be a finite $p$-extension between the cyclotomic $\Z_p$-extensions of CM-fields.
As usual, let $K_{\infty}^+$ and $k_{\infty}^+$ denote the maximal totally real subfields of $K_{\infty}$ and $k_{\infty}$, respectively.
Put 
\[
\delta = 
\begin{cases}
	1 & \text{if $\mu_p \subset k_{\infty}$,}\\
	0 & \text{otherwise.}
\end{cases}
\]

Recall that $X(K_{\infty})$ denotes the unramified Iwasawa module for $K_{\infty}$.
As a variant, let $X'(K_{\infty})$ denote the split Iwasawa module for $K_{\infty}$, that is, all finite primes are required to split completely.

The original formula of Kida deals with $X(K_{\infty})^-$.
However, it is not $X(K_{\infty})^-$ that Theorem \ref{thm:main_ord} directly applies to, but $X'(K_{\infty})^-$.
The result is the following.
It is already shown in \cite[Corollary (11.4.13)]{NSW08}, but the methods are different (see the end of this subsection).

\begin{thm}\label{thm:main_CM}
We have $\mu(X'(K_{\infty})^-) = 0$ if and only if $\mu(X'(k_{\infty})^-) = 0$.
If these equivalent conditions hold, then we have
\[
\lambda(X'(K_{\infty})^-) - \delta
= [K_{\infty}: k_{\infty}] (\lambda(X'(k_{\infty})^-) - \delta) + \sum_{w^+} (n_{w^+}(K_{\infty}^+/k_{\infty}^+) - 1),
\]
where $w^+$ runs over the finite primes of $K_{\infty}^+$ that split in $K_{\infty}/K_{\infty}^+$.
\end{thm}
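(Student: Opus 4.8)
The plan is to apply Theorem~\ref{thm:main_ord} with $k^+$ in place of the base field and with coefficients the $\epsilon$-twist of $\Z_p(1)$, and then to identify the resulting Selmer group with $X'(K_\infty)^-$. Let $\epsilon$ be the nontrivial character of $\Gal(k/k^+)\simeq\Z/2$, inflated to $\Gal(\ol{k^+}/k^+)$, and put $T=\Z_p(1)\otimes\epsilon$, a free $\Z_p$-representation of $\Gal(\ol{k^+}/k^+)$ of rank $1$; then $A=T^{\vee}(1)\simeq(\Q_p/\Z_p)\otimes\epsilon$. Define the data $\cT^+$ over $k^+$ by $T_v^+=T$ (hence $T_v^-=0$ and $A_v^-=A$) for every finite prime $v$ of $k^+$; this encodes the ``split completely at every finite prime'' condition. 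Assumption~\ref{ass:spl_fin} holds because $k_\infty^+/k^+$ is the cyclotomic $\Z_p$-extension, and Assumption~\ref{ass:Sel_rk} holds trivially: its left-hand side vanishes since $T_v^-=0$, and its right-hand side vanishes since $k^+$ has no complex place while at a real place $v$ complex conjugation acts on $T$ by $(-1)\cdot\epsilon(j_v)=(-1)(-1)=1$ (every real place of $k^+$ becomes complex in the CM field $k$), so $T^{j_v=-1}=0$. The input finite $p$-extension is $K_\infty^+/k_\infty^+$, of degree $[K_\infty:k_\infty]$.

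Next I would identify $\Sel_{\cT^+}(A/K_\infty^+)^{\vee}$ with $X'(K_\infty)^-$. Since $p$ is odd, the group algebra $\Z_p[\Gal(K_\infty/K_\infty^+)]$ splits as $\Z_p\oplus(\Z_p\otimes\epsilon)$, the two factors being the $\pm$-eigenspaces of complex conjugation, so Shapiro's lemma identifies $H^i(K_\infty^+,A)$ with the complex-conjugation minus part of $H^i(K_\infty,\Q_p/\Z_p)$, and likewise for all the relevant local cohomology groups. Tracking the local conditions through this identification, keeping in mind which primes of $K_\infty$ lie over a given prime of $K_\infty^+$ and how complex conjugation permutes them, the Selmer condition ``trivial localization at every finite prime'' over $K_\infty^+$ becomes ``split completely at every finite prime'' over $K_\infty$; and since $\Gal(K_\infty/k)\simeq\Gal(K_\infty^+/k^+)$ by restriction (using $K_\infty=k\cdot K_\infty^+$ and $k\cap K_\infty^+=k^+$), this yields an isomorphism $\Sel_{\cT^+}(A/K_\infty^+)^{\vee}\simeq X'(K_\infty)^-$ compatible with the Iwasawa-module structures, and similarly over $k_\infty^+$. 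In particular, Theorem~\ref{thm:main_ord} gives the equivalence $\mu(X'(K_\infty)^-)=0\Leftrightarrow\mu(X'(k_\infty)^-)=0$.

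It then remains to evaluate the correction terms in Theorem~\ref{thm:main_ord}. First, $H^0(K_\infty^+,A)=0$ since $\epsilon$ is nontrivial on $\Gal(\ol{K_\infty^+}/K_\infty^+)$ and $p$ is odd, so the $\lambda(H^0(-,A)^{\vee})$ terms vanish. Next, as $T_v^-=0$ we have $\delta_{\cT^+}(T/K_\infty^+)=\lambda(H^0(K_\infty^+,T))$, and by Shapiro $H^0(K_\infty^+,\Z_p(1)\otimes\epsilon)\simeq H^0(K_\infty,\Z_p(1))^-$; since complex conjugation acts by $-1$ on $\Z_p(1)$ this is all of $H^0(K_\infty,\Z_p(1))=\varprojlim_n\mu_{p^n}(K_\infty)$, which has $\Z_p$-rank $1$ if $\mu_p\subset K_\infty$ and is $0$ otherwise, and $\mu_p\subset K_\infty$ if and only if $\mu_p\subset k_\infty$ (as $K_\infty/k_\infty$ is a $p$-extension and $\#\mu_p=p$); hence $\delta_{\cT^+}(T/K_\infty^+)=\delta$, and likewise $\delta_{\cT^+}(T/k_\infty^+)=\delta$. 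Finally, for a finite prime $w^+$ of $K_\infty^+$ one has $H^0(K^+_{\infty,w^+},A_v^-)=\Q_p/\Z_p$ when $\epsilon$ is locally trivial at $w^+$, that is, when $w^+$ splits in $K_\infty/K_\infty^+$, and $0$ otherwise; moreover a prime of $K_\infty^+$ splits in $K_\infty/K_\infty^+$ if and only if the prime of $k_\infty^+$ below it splits in $k_\infty/k_\infty^+$ (a character of order dividing $2$ that is trivial on a subgroup of $p$-power index is trivial). Therefore the local sum in Theorem~\ref{thm:main_ord} collapses to $\sum_{w^+}\bigl(n_{w^+}(K_\infty^+/k_\infty^+)-1\bigr)$ with $w^+$ ranging over the finite primes of $K_\infty^+$ that split in $K_\infty/K_\infty^+$, and substituting everything into Theorem~\ref{thm:main_ord} yields exactly the asserted formula.

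The step requiring the most care is the chain of identifications in the second paragraph: one must match the local conditions defining the twisted Selmer group over $K_\infty^+$ with those defining $X'(K_\infty)$ under Shapiro's lemma, and check that the complex-conjugation action used to take minus parts is compatible with the module structures over the two isomorphic Iwasawa algebras (as in the base-field adjustment at the start of \S\ref{ss:pf_ord_thm}). Once the twist is in place, verifying the hypotheses and computing $\delta_{\cT^+}$ and the local $H^0$-contributions are routine.
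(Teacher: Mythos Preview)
Your proposal is correct and follows essentially the same approach as the paper: both apply Theorem~\ref{thm:main_ord} with base field $k^+$, representation $T=\Z_p(1)$ twisted by the quadratic character of $\Gal(k/k^+)$, the data $\cT^+$ given by $T_v^+=T$ for all finite $v$, and the extension $K_\infty^+/k_\infty^+$. The paper's proof is terser---it simply asserts $\Sel_{\cT^+}(A/K_\infty^+)^\vee\simeq X'(K_\infty)^-$ and $\delta_{\cT^+}(T/K_\infty^+)=\delta$ and says the rest is ``straightforward to check''---whereas you spell out the verification of Assumption~\ref{ass:Sel_rk}, the vanishing of $H^0(K_\infty^+,A)$, the computation of $\delta_{\cT^+}$, and the reduction of the local sum to split primes; these details are all correct.
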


\begin{proof}
We take CM-fields $K/k$ whose cyclotomic $\Z_p$-extensions are $K_{\infty}/k_{\infty}$ such that $K \cap k_{\infty} = k$.
Let $\chi$ be the quadratic character of $\Gal(k/k^+)$.
We consider the representation $\cO_{\chi} = \Z_p$ on which $\Gal(\ol{k^+}/k^+)$ acts as $\chi$.
We apply Theorem \ref{thm:main_ord} to the base field $k^+$, the representation $T = \cO_{\chi}(1)$, and the extension $K_{\infty}^+/k^+$.
The data $\cT^+$ is defined by $T_v^+ = T$ for all finite primes $v$.
Then Assumption \ref{ass:Sel_rk} holds and we have $\Sel_{\cT^+}(A/K_{\infty}^+)^{\vee} \simeq X'(K_{\infty})^-$.
We have
\[
\delta_{\cT^+}(T/K_{\infty}) 
= \lambda(H^0(K_{\infty}^+, \cO_{\chi}(1))) 
= \delta.
\]
It is straightforward to check that the formula in Theorem \ref{thm:main_ord} specializes to the theorem.
\end{proof}

Now let us discuss the original formula of Kida.

\begin{thm}[Kida \cite{Kid80}]\label{thm:Kida}
We have $\mu(X(K_{\infty})^-) = 0$ if and only if $\mu(X(k_{\infty})^-) = 0$.
If these equivalent conditions hold, then we have
\[
 \lambda(X(K_{\infty})^-) - \delta
 = [K_{\infty}: k_{\infty}] (\lambda(X(k_{\infty})^-) - \delta) + \sum_{w^+ \nmid p} (e_{w^+}(K^+_{\infty}/k^+_{\infty}) - 1),
\]
where $w^+$ runs over the non-$p$-adic finite primes of $K_{\infty}^+$ that split in $K_{\infty}/K_{\infty}^+$.
\end{thm}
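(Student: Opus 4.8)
The plan is to deduce Theorem~\ref{thm:Kida} from Theorem~\ref{thm:main_CM} by comparing the unramified module $X(K_\infty)^-$ with the split module $X'(K_\infty)^-$ for which the formula has just been proved. The discrepancy between the two is governed by the decomposition groups of the finite primes, and the key point is that the non-$p$-adic part of this discrepancy grows in the same way over $K_\infty$ and over $k_\infty$ — exactly matching the common term $\sum_{w^+\nmid p}(e_{w^+}-1)$ in the two theorems — so that it drops out of the comparison, leaving only the $p$-adic contributions.

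First I would invoke the standard exact sequence from global class field theory (cf.~\cite[\S 11.4]{NSW08}): the field attached to $X'(K_\infty)$ is the largest subextension of the maximal unramified abelian pro-$p$ extension of $K_\infty$ in which every finite prime splits completely, so there is a canonical surjection $X(K_\infty)\twoheadrightarrow X'(K_\infty)$ whose kernel $D(K_\infty)$ is the closed $\Lambda$-submodule topologically generated by the decomposition groups of all finite primes of $K_\infty$. Since $p$ is odd, taking minus components is exact, so we get a short exact sequence $0\to D(K_\infty)^-\to X(K_\infty)^-\to X'(K_\infty)^-\to 0$, and likewise over $k_\infty$.

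Next I would analyze $D(\cdot)^-$ prime by prime. Write $D(K_\infty)=\sum_v Z_v$, where $Z_v$ is the $\Lambda$-submodule generated by the decomposition groups in $X(K_\infty)$ of the primes of $K_\infty$ above a prime $v$ of $K$. For $v\nmid p$, by Assumption~\ref{ass:spl_fin} there are only finitely many primes of $K_\infty$ above $v$, each with procyclic decomposition group, so $Z_v$ is finitely generated over $\Z_p$; for $v\mid p$, the prime $v$ is undecomposed in the cyclotomic $\Z_p$-extension and $\Gamma=\Gal(K_\infty/K)$ acts trivially on the (procyclic) decomposition group of the prime above it, so $Z_v$ is again finitely generated over $\Z_p$, with $Z_v^-$ free over $\Z_p$ of rank $1$ if the prime of $K_\infty^+$ below splits in $K_\infty/K_\infty^+$ and $0$ otherwise. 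In particular $\mu(D(K_\infty)^-)=0$ unconditionally, and likewise over $k_\infty$. The substantive point is the evaluation of the $\lambda$-invariants: combining the prime-by-prime contributions with the relations among Frobenius classes coming from global class field theory, I would establish
\[
\lambda(D(K_\infty)^-)-[K_\infty:k_\infty]\,\lambda(D(k_\infty)^-)=-\sum_{w^+\mid p}\bigl(n_{w^+}(K_\infty^+/k_\infty^+)-1\bigr),
\]
where $w^+$ runs over the $p$-adic primes of $K_\infty^+$ that split in $K_\infty/K_\infty^+$; here the non-$p$-adic parts of $D(K_\infty)^-$ and $D(k_\infty)^-$ cancel against one another because they are accounted for, on both sides, by the same ramification data producing the $\sum_{w^+\nmid p}(e_{w^+}-1)$ term.

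Finally I would assemble the pieces. From the exact sequence and $\mu(D(\cdot)^-)=0$ we get $\mu(X(K_\infty)^-)=0\iff\mu(X'(K_\infty)^-)=0$, and similarly over $k_\infty$; with Theorem~\ref{thm:main_CM} this gives the $\mu$-assertion of Theorem~\ref{thm:Kida}. Assuming $\mu=0$, additivity of $\lambda$ along the exact sequence gives $\lambda(X(K_\infty)^-)=\lambda(X'(K_\infty)^-)+\lambda(D(K_\infty)^-)$, and likewise over $k_\infty$; substituting Theorem~\ref{thm:main_CM} and the displayed identity (and using $n_{w^+}=e_{w^+}$ for $w^+\nmid p$), the extra $p$-adic term $\sum_{w^+\mid p}(n_{w^+}-1)$ in Theorem~\ref{thm:main_CM} is cancelled and one is left with exactly the formula of Theorem~\ref{thm:Kida}. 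The main obstacle is the middle step: the precise computation of $\lambda(D(\cdot)^-)$ at both levels — in particular verifying that the non-$p$-adic decomposition contributions match between $K_\infty$ and $k_\infty$ so as to cancel, and that, after the global class field theory relations are taken into account, the $p$-adic part contributes exactly $-\sum_{w^+\mid p}(n_{w^+}-1)$.
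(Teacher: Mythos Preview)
Your approach is exactly the paper's second method: deduce Theorem~\ref{thm:Kida} from Theorem~\ref{thm:main_CM} by comparing $X(K_\infty)^-$ with $X'(K_\infty)^-$. The paper simply cites \cite[Lemma (11.4.9)]{NSW08} for the exact sequence
\[
0 \to \Bigl(\bigoplus_{w \mid p} \Z_p\Bigr)^- \to X(K_\infty)^- \to X'(K_\infty)^- \to 0,
\]
after which the passage from Theorem~\ref{thm:main_CM} to Theorem~\ref{thm:Kida} is immediate. (The paper also notes a first method, via Theorem~\ref{thm:main_real} and Kummer duality, which you do not mention.)

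Where your write-up goes astray is in the treatment of the non-$p$-adic primes. You describe their decomposition groups in $X(K_\infty)$ as ``procyclic'' and then posit a nontrivial cancellation of their contributions between the two levels. In fact these decomposition groups are \emph{trivial}: for $w \nmid p$ the residue field of $K_{\infty,w}$ is already the (unique) $\Z_p$-extension of a finite field, so its absolute Galois group has no pro-$p$ quotient, and hence $K_{\infty,w}$ admits no nontrivial unramified pro-$p$ extension. Thus $Z_v = 0$ for $v \nmid p$, and your kernel $D(K_\infty)^-$ is precisely $\bigl(\bigoplus_{w\mid p}\Z_p\bigr)^-$. With this in hand, the ``main obstacle'' you flag dissolves: $\lambda(D(K_\infty)^-)$ is just the number of $p$-adic primes of $K_\infty^+$ that split in $K_\infty/K_\infty^+$, and your displayed identity follows from the degree relation $\sum_{w^+\mid v^+} n_{w^+} = [K_\infty^+:k_\infty^+]$ together with the observation (coprimality of $2$ and $p$) that $w^+$ splits in $K_\infty/K_\infty^+$ if and only if the prime $v^+$ of $k_\infty^+$ below it splits in $k_\infty/k_\infty^+$. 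No ``global class field theory relations'' beyond this are needed.
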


\begin{proof}
We have two methods to obtain this theorem.

One is to use Theorem \ref{thm:main_real} and the Kummer duality (see \cite[Propositions 13.32 and 15.37]{Was97}) between the $p$-ramified Iwasawa modules for totally real fields and the minus components of the unramified Iwasawa modules for CM-fields.
Strictly speaking, we need a slightly strengthened statement of Theorem \ref{thm:main_real}, but we omit the details.

The other is to use Theorem \ref{thm:main_CM} and the comparison between $X'(K_{\infty})^-$ and $X(K_{\infty})^-$.
For this, we only need to use \cite[Lemma (11.4.9)]{NSW08}, which gives us an exact sequence
\[
0 \to \left(\bigoplus_{w \mid p} \Z_p \right)^- \to X(K_{\infty})^- \to X'(K_{\infty})^- \to 0,
\]
where $w$ runs over the $p$-adic primes of $K_{\infty}$.
\end{proof}

Indeed, these two ideas imply that Theorems \ref{thm:main_real}, \ref{thm:main_CM}, and \ref{thm:Kida} are essentially all equivalent to each other.
Here we mention that the proof of Theorem \ref{thm:main_CM} in \cite{NSW08} relies on these equivalences:
It proceeds as Theorem \ref{thm:main_real} $\Rightarrow$ Theorem \ref{thm:Kida} $\Rightarrow$ Theorem \ref{thm:main_CM}.
On the other hand, our proof of Theorem \ref{thm:main_CM} is more direct.

%%%%%%%%%%%%%%%%%%%%%%
\subsection{Ordinary CM-fields}\label{ss:Iw4}
%%%%%%%%%%%%%%%%%%%%%%

Let $k$ be a CM-field such that every $p$-adic prime splits in $k/k^+$.
Take a CM-type $\Sigma \subset S_p(k)$, so $S_p(k)$ is the disjoint union of $\Sigma$ and the conjugate of $\Sigma$.
Let $k_{\infty}/k$ be a fixed $\Z_p$-extension such that no finite prime splits completely (Assumption \ref{ass:spl_fin}).
Let $K_{\infty}/k_{\infty}$ be a finite $p$-extension.
We consider the $\Sigma$-ramified, split Iwasawa module $X'_{\Sigma}(K_{\infty})$.
More concretely, $X'_{\Sigma}(K_{\infty})$ is defined as the Galois group of the maximal abelian pro-$p$ extension over $K_{\infty}$ in which all primes that do not lie above primes in $\Sigma$ split completely.

\begin{thm}\label{thm:main_ord_CM}
We have $\mu(X'_{\Sigma}(K_{\infty})) = 0$ if and only if $\mu(X'_{\Sigma}(k_{\infty})) = 0$.
If these equivalent conditions hold, then we have
\[
\lambda(X'_{\Sigma}(K_{\infty})) - 1
= [K_{\infty}: k_{\infty}] \left( \lambda(X'_{\Sigma}(k_{\infty})) - 1 \right) 
+ \sum_{w \not \in \Sigma(K_{\infty})} (n_w(K_{\infty}/k_{\infty}) - 1),
\]
where $w$ runs over the finite primes of $K_{\infty}$ that do not lie above primes in $\Sigma$.
\end{thm}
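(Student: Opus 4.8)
The plan is to deduce Theorem \ref{thm:main_ord_CM} from Theorem \ref{thm:main_ord}, applied with the base field $k$, the representation $T = \Z_p(1)$ (so that $A = \Q_p/\Z_p$ is the trivial module), and the ordinarity data $\cT^+$ defined by $T_v^+ = 0$ for $v \in \Sigma$ and $T_v^+ = T$ for every other finite prime $v$; since $\Sigma$ is finite this respects the standing requirement $T_v^+ = T$ for almost all $v$. The first step is to check Assumption \ref{ass:Sel_rk}. As $k$ is totally imaginary, $S_{\R}(k) = \emptyset$ and the right-hand side equals $\# S_{\C}(k) = [k:\Q]/2$. On the left-hand side only the primes $v \in \Sigma$ contribute, each with $\rank_{\Z_p}(T_v^-) = 1$; and since $\Sigma$ is a $p$-adic CM-type, the complex conjugation gives a degree-preserving bijection between $\Sigma$ and its conjugate, so $\sum_{v \in \Sigma}[k_v:\Q_p] = \tfrac12\sum_{v \in S_p(k)}[k_v:\Q_p] = [k:\Q]/2$, and the two sides agree. (That every $p$-adic prime of $k$ splits in $k/k^+$ is exactly what makes such a $\Sigma$ available.) Assumption \ref{ass:spl_fin} is among the hypotheses.

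The main step is the identification $\Sel_{\cT^+}(A/K_{\infty})^{\vee} \simeq X'_{\Sigma}(K_{\infty})$ (and likewise over $k_{\infty}$). By definition of the Selmer group (cf.\ Proposition \ref{prop:C_coh}), $\Sel_{\cT^+}(A/K_{\infty})$ consists of the classes $c \in H^1(k_S/K_{\infty}, \Q_p/\Z_p)$ with $c|_w = 0$ in $H^1(K_{\infty, w}, A)$ for every prime $w$ above some $v \in S_f \setminus \Sigma$; at primes above $\Sigma$ there is no condition, as $A_v^- = 0$ there. Vanishing of $c|_w$ says precisely that the abelian pro-$p$ extension cut out by $c$ is split at $w$ (unramified with trivial Frobenius). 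The point is that these finitely many conditions already force splitting at every finite prime not above $\Sigma$: for $w \notin S$ — necessarily prime to $p$ and unramified in $K_{\infty}/k$ — Assumption \ref{ass:spl_fin} makes the residue field of $K_{\infty, w}$ an infinite algebraic extension of a finite field, so its absolute Galois group has no pro-$p$ quotient, $H^1_{\ur}(K_{\infty, w}, A) = 0$, and any $c \in H^1(k_S/K_{\infty}, A)$, being unramified at $w$, automatically satisfies $c|_w = 0$. Hence $\Sel_{\cT^+}(A/K_{\infty})$ is exactly the group of classes that split at all finite primes not above $\Sigma$, whose Pontryagin dual is $X'_{\Sigma}(K_{\infty})$ by definition.

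What remains is bookkeeping. Since $A$ is trivial, $\lambda(H^0(K_{\infty}, A)^{\vee}) = \lambda(H^0(k_{\infty}, A)^{\vee}) = 1$; and $\delta_{\cT^+}(T/K_{\infty}) = \delta_{\cT^+}(T/k_{\infty}) = 0$ because some $v \in \Sigma$ has $T_v^+ = 0$, i.e.\ $T_v^- = T$. In the local correction terms of Theorem \ref{thm:main_ord}, primes $w$ above $v \in \Sigma$ contribute nothing ($A_v^- = 0$), while for $w$ above $v \notin \Sigma$ one has $A_v^- = A$, hence $\lambda(H^0(K_{\infty, w}, A_v^-)^{\vee}) = \lambda(H^0(k_{\infty, w}, A_v^-)^{\vee}) = 1$ and the term equals $n_w(K_{\infty}/k_{\infty}) - 1$; this vanishes for $w \notin S$, where $n_w(K_{\infty}/k_{\infty}) = 1$. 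Feeding this into the formula of Theorem \ref{thm:main_ord} yields precisely the stated identity, with $w$ running over the finite primes of $K_{\infty}$ not above $\Sigma$; the equivalence of the two $\mu = 0$ statements is immediate from that in Theorem \ref{thm:main_ord}.

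The one genuinely delicate point is the identification in the second paragraph: Theorem \ref{thm:main_ord} builds in only finitely many local conditions, whereas $X'_{\Sigma}(K_{\infty})$ is defined by infinitely many splitting conditions, and reconciling them rests on the residue-field computation together with a careful use of Assumption \ref{ass:spl_fin}. Everything else is a direct specialization of Theorem \ref{thm:main_ord}.
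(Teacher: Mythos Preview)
Your proof is correct and follows exactly the paper's approach: apply Theorem \ref{thm:main_ord} with $T = \Z_p(1)$ and the indicated ordinarity data $\cT^+$, then specialize. One remark: the identification $\Sel_{\cT^+}(A/K_\infty)^\vee \simeq X'_\Sigma(K_\infty)$ is in fact immediate from the paper's definition of the Selmer group (which already imposes local conditions at \emph{all} finite primes $v$, not just those in $S_f$), so your detour through Proposition \ref{prop:C_coh} and the residue-field argument---while correct---is unnecessary, and the ``delicate point'' you flag is less delicate than it appears.
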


\begin{proof}
We apply Theorem \ref{thm:main_ord} to $T = \Z_p(1)$, so $A = \Q_p/\Z_p$ is a trivial representation.
Define the data $\cT^+$ by
\[
T_v^+ = 
\begin{cases}
	0 & \text{if $v \in \Sigma$,}\\
	T & \text{otherwise.}
\end{cases}
\]
This satisfies Assumption \ref{ass:Sel_rk} and we have $\Sel_{\cT^+}(A/K_{\infty})^{\vee} \simeq X'_{\Sigma}(K_{\infty})$.
Now Theorem \ref{thm:main_ord} shows the theorem.
\end{proof}

When $k$ is an imaginary quadratic field and $k_{\infty}/k$ is the unique $\Z_p$-extension unramified outside $\Sigma$, Theorem \ref{thm:main_ord_CM} recovers a recent result of Murakami \cite{Mur}.

%%%%%%%%%%%%%%%%%%%%%%
\subsection{Ordinary elliptic curves}\label{ss:ell}
%%%%%%%%%%%%%%%%%%%%%%

Let $E$ be an elliptic curve over a number field $k$.
We suppose that $E$ has good ordinary reduction at all $p$-adic primes of $k$.
For simplicity, we deal with the case where $k_{\infty}/k$ is the cyclotomic $\Z_p$-extension.

The usual Selmer group $\Sel(E/K_{\infty})$ is defined as the kernel of the natural homomorphism
\[
H^1(K_{\infty}, E[p^{\infty}]) \to \bigoplus_{v} \frac{H^1(K_{\infty, v}, E[p^{\infty}])}{E(K_{\infty, v}) \otimes \Q_p/\Z_p}.
\]
Here, $v$ runs over the finite primes of $k$ and $E(K_{\infty, v}) \otimes \Q_p/\Z_p$ is regarded as a subgroup of $H^1(K_{\infty, v}, E[p^{\infty}])$ via the Kummer map.

The following is the result (see also Remark \ref{rem:HM_diff})

\begin{thm}[{Hachimori--Matsuno \cite[Theorem 3.1]{HM99}}]\label{thm:main_ell}
We have $\mu(\Sel(E/K_{\infty})^{\vee}) = 0$ if and only if $\mu(\Sel(E/k_{\infty})^{\vee}) = 0$.
If these equivalent conditions hold, then we have
\begin{align}
\lambda(\Sel(E/K_{\infty})^{\vee})
& = [K_{\infty}: k_{\infty}] \lambda(\Sel(E/k_{\infty})^{\vee})\\
&  \quad + \sum_{w \nmid p, \text{good}} 2(e_w(K_{\infty}/k_{\infty}) - 1)
\quad + \sum_{w \nmid p, \text{split}} (e_w(K_{\infty}/k_{\infty}) - 1)\\
&  \quad + \sum_{w \nmid p, \text{pot-good}} (-2)
\quad + \sum_{w \nmid p, \text{pot-split}} (-1).
\end{align}
Here, $w$ runs over the non-$p$-adic finite primes of $K_{\infty}$:
\begin{itemize}
\item
``good'' means good reduction for $E/k_{\infty}$ and $E(k_v)[p] \neq 0$,
\item
``split'' means split multiplicative reduction for $E/k_{\infty}$,
\item
``pot-good'' means additive reduction for $E/k_{\infty}$ and ``good'' for $E/K_{\infty}$, and
\item
``pot-split'' means additive reduction for $E/k_{\infty}$ and ``split'' for $E/K_{\infty}$.
\end{itemize}
Note that only $w$'s with $e_w(K_{\infty}/k_{\infty}) > 1$ contribute.
\end{thm}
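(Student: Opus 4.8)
The plan is to deduce this from Theorem~\ref{thm:main_ord}: choose the representation, the ordinarity datum and the $\Z_p$-extension so that the Selmer group of Theorem~\ref{thm:main_ord} becomes the classical one, and then read off the right-hand side by computing the local cohomology terms according to the reduction type of $E$. Concretely, I would take $k_{\infty}/k$ to be the cyclotomic $\Z_p$-extension (so Assumption~\ref{ass:spl_fin} holds automatically), $T = T_pE$, and the ordinarity datum $\cT^+$ given by $T_v^+ = T$ for every $v \nmid p$ and, for $v \in S_p(k)$, by the rank-one $\Gal(\ol{k_v}/k_v)$-stable submodule $T_v^+ \subset T$ singled out by good ordinary reduction, chosen so that the sequence $0 \to A_v^+ \to A \to A_v^- \to 0$ is the canonical filtration of $E[p^{\infty}]$; thus $A_v^-$ is the $p$-divisible group $\wtil{E}_v[p^{\infty}]$ of the reduction. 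Then $A = T^{\vee}(1) \simeq E[p^{\infty}]$ by the Weil pairing, and for $v \nmid p$ we have $A_v^- = A$. Assumption~\ref{ass:Sel_rk} becomes the identity $\sum_{v \in S_p(k)}[k_v:\Q_p] = r_1(k) + 2r_2(k)$, i.e.\ $[k:\Q] = [k:\Q]$: on the right, each real place contributes $\rank_{\Z_p}(T^{j_v=-1}) = 1$ because complex conjugation acts on $T_pE$ with eigenvalues $1$ and $-1$ (its determinant being the cyclotomic character), and each complex place contributes $2$.

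Next I would check that $\Sel_{\cT^+}(A/F)$ agrees with the classical Selmer group $\Sel(E/F)$ for $F \in \{k_{\infty}, K_{\infty}\}$. At $v \mid p$ this is the theorem of Coates--Greenberg on deeply ramified extensions: the image of the Kummer map $E(F_v) \otimes \Q_p/\Z_p$ inside $H^1(F_v, E[p^{\infty}])$ equals the image of $H^1(F_v, A_v^+)$, which is $\Ker(H^1(F_v, A) \to H^1(F_v, A_v^-))$. At $v \nmid p$ one has $E(F_v) \otimes \Q_p/\Z_p = 0$ (the reduction-kernel part of $E(F_v)$ is pro-$\ell$, hence uniquely $p$-divisible, while the rest is a divisible-plus-finite $p$-primary torsion group), so the classical local condition at such $v$ is the zero subgroup, matching the condition of $\Sel_{\cT^+}$ since there $A_v^- = A$. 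I would also note that $\delta_{\cT^+}(T/F) = 0$ since $H^0(F, T_pE) = T_pE(F) = 0$, and that $\lambda(H^0(F, A)^{\vee}) = 0$ since $E(F)[p^{\infty}]$ is finite.

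Granting these, Theorem~\ref{thm:main_ord} gives the asserted equivalence of the two $\mu = 0$ statements, and under $\mu = 0$ it gives that $\lambda(\Sel(E/K_{\infty})^{\vee})$ equals $[K_{\infty}:k_{\infty}]\lambda(\Sel(E/k_{\infty})^{\vee})$ plus $\sum_{w}\bigl(n_w(K_{\infty}/k_{\infty})\lambda(H^0(k_{\infty,w}, A_v^-)^{\vee}) - \lambda(H^0(K_{\infty,w}, A_v^-)^{\vee})\bigr)$, the sum over finite primes $w$ of $K_{\infty}$. For $w \mid p$ the module $H^0(K_{\infty,w}, A_v^-) = \wtil{E}_v(\kappa_w)[p^{\infty}]$ is finite, where $\kappa_w$ is the residue field of $K_{\infty,w}$, which is finite because the cyclotomic $\Z_p$-extension is totally ramified at $p$; hence $p$-adic primes contribute nothing. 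For $w \nmid p$ one has $n_w = e_w(K_{\infty}/k_{\infty})$ (as already remarked, $f_w(K_{\infty}/k_{\infty}) = 1$) and the residue fields of $K_{\infty,w}$ and $k_{\infty,w}$ coincide, so the two coranks are equal; writing $c_v$ for the $\Z_p$-corank of $E(k_{\infty,w})[p^{\infty}]$, the contribution of $w$ is $(e_w - 1)\,c_v$.

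The remaining and main step is the computation of $c_v$ for each $v \nmid p$ in terms of the reduction type of $E$; this reproduces the local core of the Hachimori--Matsuno argument. For good reduction, $c_v$ is the number of Frobenius eigenvalues on $\wtil{E}_v[p]$ congruent to $1$ modulo $p$, which is nonzero precisely when $E(k_v)[p] \neq 0$. For split multiplicative reduction, Tate uniformisation gives an injection $\mu_{p^{\infty}}(k_{\infty,w}) \hookrightarrow E(k_{\infty,w})[p^{\infty}]$ and one finds $c_v = 1$. For additive reduction $c_v = 0$, and such a prime enters the formula only through the $K_{\infty}$-term, where the reduction may become good or --- for small $p$ --- split multiplicative. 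Matching these values case by case against the categories ``good'', ``split'', ``pot-good'' and ``pot-split'' converts $\sum_{w \nmid p}(e_w - 1)c_v$ into the four displayed sums. I expect the trickiest points to be the control of the Frobenius action on $\wtil{E}_v[p]$ in the good-reduction case (including the unipotent possibility) and the correct enumeration of the additive primes whose reduction improves over the $p$-extension $K_{\infty}/k_{\infty}$.
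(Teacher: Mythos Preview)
Your approach is the same as the paper's: apply Theorem~\ref{thm:main_ord} with $T = T_pE$ and the Greenberg filtration at $p$, identify $\Sel_{\cT^+}$ with the classical Selmer group (the paper cites \cite[Proposition 2.4]{Gree99} rather than Coates--Greenberg directly), kill the global and $p$-adic terms by finiteness (Mazur, resp.\ finiteness of the residue field), and compute the non-$p$-adic local coranks. For the last step the paper simply quotes \cite[Proposition 5.1]{HM99}, restated as Proposition~\ref{prop:HM1}, instead of the direct Frobenius/Tate-curve analysis you sketch.

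There is one slip in your third paragraph. You assert that for $w \nmid p$ ``the residue fields of $K_{\infty,w}$ and $k_{\infty,w}$ coincide, so the two coranks are equal'', and hence that the contribution of $w$ is $(e_w - 1)c_v$. But the corank of $E(\,\cdot\,)[p^{\infty}]$ is not a function of the residue field alone: when $E$ has additive reduction over $k_{\infty,w}$ and acquires good (resp.\ split multiplicative) reduction over the totally ramified extension $K_{\infty,w}$, the corank jumps from $0$ to $2$ (resp.\ $1$). The actual contribution is $e_w \cdot 0 - 2 = -2$ (resp.\ $-1$), not $(e_w - 1)\cdot 0 = 0$, and this is exactly where the ``pot-good'' and ``pot-split'' sums come from. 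Your last paragraph shows you know the $K_{\infty}$-corank can differ, so this is an inconsistency in the write-up rather than a missing idea; but the formula $(e_w - 1)c_v$ as stated does not produce the last two sums, and you should keep the two coranks separate throughout. The paper also records that only ramified $w$ contribute, whence $\mu_p \subset k_v$, which is what makes the good-reduction corank equal to $0$ or $2$ (never $1$) in Proposition~\ref{prop:HM1}.
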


\begin{proof}
We apply Theorem \ref{thm:main_ord} to the Tate module $T = T_pE$, so $A = E[p^{\infty}]$ by the Weil pairing.
For each $v \in S_p(k)$, by the ordinary assumption, we can define $0 \to T_v^+ \to T \to T_v^- \to 0$, or rather $0 \to A_v^+ \to A \to A_v^- \to 0$, as
\[
0 \to \hat{E}_v[p^{\infty}] \to E[p^{\infty}] \to \tilde{E}_v[p^{\infty}] \to 0,
\]
where $\hat{E}_v$ denotes the associated formal group and $\tilde{E}_v$ the reduction modulo $v$.
We also set $T_v^+ = T$ for any $v \nmid p$.
This data $\cT^+$ satisfies Assumption \ref{ass:Sel_rk}.
Then we have $\Sel(E/K_{\infty}) = \Sel_{\cT^+}(A/K_{\infty})$ (see Greenberg \cite[Proposition 2.4]{Gree99}).

By applying Theorem \ref{thm:main_ord}, we obtain the equivalence on $\mu = 0$ and moreover
\begin{align}
& \lambda(\Sel(E/K_{\infty})^{\vee})
	- \lambda(E(K_{\infty})[p^{\infty}]^{\vee})
	- \delta_{\cT^+}(T/K_{\infty})\\
& = [K_{\infty}: k_{\infty}] \left[\lambda(\Sel(E/k_{\infty})^{\vee})
	- \lambda(E(k_{\infty})[p^{\infty}]^{\vee})
	- \delta_{\cT^+}(T/k_{\infty}) \right]\\
& \quad + \sum_{w \mid p} \left[ n_w(K_{\infty}/k_{\infty}) \lambda(\tilde{E}_v(\wtil{k_{\infty, w}})[p^{\infty}]^{\vee}) - \lambda(\tilde{E}_v(\wtil{K_{\infty, w}})[p^{\infty}]^{\vee}) \right]\\
& \quad + \sum_{w \nmid p} \left[ e_w(K_{\infty}/k_{\infty}) \lambda(E(k_{\infty, w})[p^{\infty}]^{\vee}) - \lambda(E(K_{\infty, w})[p^{\infty}]^{\vee}) \right].
\end{align}
Here, $\wtil{(-)}$ denotes the residue fields.
We have to compute the rational points.
\begin{itemize}
\item
(global)
The group $E(K_{\infty})[p^{\infty}]$ is finite, thanks to Mazur \cite[Proposition 6.12]{Maz72} (see \cite[page 57]{Gree99}).
This also implies $\delta_{\cT^+}(T/K_{\infty}) = 0$.
\item
($p$-adic)
For $w \mid p$, the group $\tilde{E}_v(\wtil{K_{\infty, w}})[p^{\infty}]$ is finite, simply because $\wtil{K_{\infty, w}}$ is a finite field.
\item
(non-$p$-adic)
The contribution of non-$p$-adic primes is determined by Proposition \ref{prop:HM1} below.
Note that we only have to consider the primes $w \nmid p$ that are ramified in $K_{\infty}/k_{\infty}$, which implies that $k_v \supset \mu_p$ for the prime $v$ of $k$ lying below $w$.
\end{itemize}
Combining these, we obtain Theorem \ref{thm:main_ell}.
\end{proof}

\begin{prop}[{\cite[Proposition 5.1]{HM99}}]\label{prop:HM1}
Let $F/\Q_l$ ($l \neq p$) be a finite extension with $F \supset \mu_p$ and let $F_{\infty}$ be its cyclotomic $\Z_p$-extension.
Let $E/F$ be an elliptic curve.
Then
\[
\rank_{\Z_p}(E(F_{\infty})[p^{\infty}]^{\vee})
= \begin{cases}
	2 & \text{if $E$ has good reduction and $E(F)[p] \neq 0$,}\\
	1 & \text{if $E$ has split multiplicative reduction,}\\
	0 & \text{otherwise.}
\end{cases}
\]
\end{prop}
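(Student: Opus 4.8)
The plan is to compute $E(F_\infty)[p^\infty]=H^0(F_\infty,E[p^\infty])$ by a case analysis on the reduction type of $E$, after first pinning down the shape of $F_\infty/F$. Since $l\neq p$, adjoining $p$-power roots of unity to a local field of residue characteristic $l$ gives an unramified extension, and because $\mu_p\subset F$ the group $\Gal(F(\mu_{p^\infty})/F)$ is procyclic pro-$p$ (a subgroup of $1+p\Z_p$); hence $F_\infty=F(\mu_{p^\infty})$ is the \emph{unramified} $\Z_p$-extension of $F$. In particular $\mu_{p^\infty}\subset F_\infty$; the residue field of $F_\infty$ is $\wtil{F_\infty}=\bigcup_n\F_{q^{p^n}}$, where $q=\#\wtil F$, and $p\mid q-1$ because $\mu_p\subset\wtil F$; each $F_n$ is unramified over $F$, so the reduction type of $E$ is the same over every $F_n$; and, crucially since $p$ is odd, the unramified quadratic extension of $F$ is \emph{not} contained in $F_\infty$.

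I would handle the good reduction case first, as it is the core of the argument. The kernel of reduction is pro-$l$, so reduction induces an isomorphism $E(F_\infty)[p^\infty]\cong\tilde E(\wtil{F_\infty})[p^\infty]$, and the task becomes computing the $p^\infty$-torsion of $\tilde E$ over $\bigcup_n\F_{q^{p^n}}$. Let $\phi$ be the $q$-power Frobenius acting on $T_p\tilde E\cong\Z_p^2$, so $\det\phi=q$ and $\tilde E(\F_{q^{p^n}})[p^\infty]=\ker(\phi^{p^n}-1\mid\tilde E[p^\infty])$. If $E(F)[p]=0$, equivalently $\tilde E(\wtil F)[p]=0$, then $\phi-1\in\GL_2(\Z_p)$; the eigenvalues of $\phi\bmod p$ lie in $\ol{\F}_p^{\times}$ with order prime to $p$ and distinct from $1$, hence so do those of $\phi^{p^n}\bmod p$ for every $n$, so $\phi^{p^n}-1\in\GL_2(\Z_p)$, each $\tilde E(\F_{q^{p^n}})[p^\infty]$ vanishes, and the union has corank $0$. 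If instead $E(F)[p]\neq0$, then $1$ is an eigenvalue of $\phi$ on $\tilde E[p]$, and since $\det\phi=q\equiv1\pmod p$ the other eigenvalue is also $1$, so $\phi$ is unipotent modulo $p$ and therefore $\phi^{p}\equiv1\pmod p$ on $T_p\tilde E$; writing $\phi^{p}=1+pB$ with $B\in M_2(\Z_p)$ and expanding $(1+pB)^{p^n}$ gives $\phi^{p^{n+1}}\equiv1\pmod{p^{n+1}}$, hence $\tilde E[p^{n+1}]\subset\tilde E(\F_{q^{p^{n+1}}})$, and the union is all of $\tilde E[p^\infty]\cong(\Q_p/\Z_p)^2$, i.e.\ corank $2$.

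For split multiplicative reduction I would use the Tate parametrization $E(\ol F)=\ol F^{\times}/q_E^{\Z}$ with $v_F(q_E)>0$: the copy of $\mu_{p^\infty}\subset F_\infty$ already gives corank $\geq1$, while any further $p^\infty$-torsion point is represented by an element involving a genuine $p$-power root of $q_E$, which cannot lie in the unramified extension $F_\infty$ for all but boundedly many exponents; thus $E(F_\infty)[p^\infty]$ differs from $\mu_{p^\infty}$ by a finite group and has corank exactly $1$. The three remaining cases all give corank $0$, and I would treat them through the special fiber of the N\'eron model (whose type is unchanged over every $F_n$, and whose formal-group kernel is pro-$l$): for additive reduction the identity component is $\bG_a$, with no $p$-torsion, so $E(F_n)[p^\infty]$ injects into the finite, uniformly bounded component group; for non-split multiplicative reduction the identity component is the non-split torus, whose $\F_{q^{p^n}}$-points number $q^{p^n}+1$, and since $p\mid q-1$ with $p$ odd we have $p\nmid q^{p^n}+1$, so there is no $p$-torsion there either, while the component group of a non-split $I_m$ has trivial $p$-part; good reduction with $E(F)[p]=0$ was already covered.

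I expect the good reduction case to be the main obstacle, and specifically the assertion that the corank is exactly $2$, not merely $\geq1$, when $E(F)[p]\neq0$. This is precisely where $\mu_p\subset F$ is indispensable: it forces $\det\phi\equiv1\pmod p$, so that $1$ being an eigenvalue of $\phi$ on $\tilde E[p]$ makes $\phi$ unipotent there, whence — after replacing $\phi$ by $\phi^{p}$ — the entire $p^\infty$-torsion of $\tilde E$ becomes rational over $\wtil{F_\infty}$. Without this hypothesis $\phi$ need not act unipotently, and one can only guarantee a single independent $p^\infty$-torsion direction.
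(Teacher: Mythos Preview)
The paper does not give its own proof of this proposition; it merely cites it from Hachimori--Matsuno \cite[Proposition 5.1]{HM99} and uses it as an input in the proof of Theorem \ref{thm:main_ell}. So there is nothing in the paper to compare your argument against.

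That said, your proof is correct and is essentially the standard argument (and, as far as the outline goes, the one in \cite{HM99}). A few remarks:
\begin{itemize}
\item Your key observation in the good reduction case---that $\mu_p\subset F$ forces $\det\phi\equiv1\pmod p$, so a single eigenvalue $1$ on $\tilde E[p]$ makes $\phi$ unipotent there---is exactly the point of the hypothesis, and your lifting argument ($\phi^p\equiv1\pmod p$ on $T_p\tilde E$, hence $\phi^{p^{n+1}}\equiv1\pmod{p^{n+1}}$) is clean.
\item In the split multiplicative case your valuation argument is fine: a genuine $p^n$-th root of $q_E$ has valuation $v_F(q_E)/p^n$, which is non-integral for $n>v_p(v_F(q_E))$, so cannot lie in the unramified extension $F_\infty$. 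This bounds the image of $E(F_\infty)[p^\infty]$ in the quotient $\Q_p/\Z_p$ of the Tate filtration, forcing corank exactly $1$.
\item For non-split multiplicative reduction, your parity remark (that $F_\infty/F$ has no quadratic subextension since $p$ is odd) is precisely what keeps the reduction non-split over every $F_n$, and the count $q^{p^n}+1\equiv 2\pmod p$ is correct.
\item In the additive case you should perhaps be explicit that the component group is bounded uniformly in $n$ (it has order dividing $4$ except for types $\mathrm{IV}$, $\mathrm{IV}^*$ where it is $\Z/3$), but in any case it is independent of $n$ since the reduction type is preserved under unramified base change.
\end{itemize}
Overall the argument is complete and would serve as a self-contained replacement for the citation.
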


\begin{rem}\label{rem:HM_diff}
In \cite{HM99}, the prime $p = 2$ is also dealt with when $k$ is totally imaginary.
In principle our method can be applied to this case, but we omit it.
On the other hand, we have removed an assumption when $p = 3$ (denoted by (Hyp2) in \cite{HM99}) that, for each non-$p$-adic prime, if we have additive reduction for $E/k_{\infty}$, then we still have additive reduction for $E/K_{\infty}$.
This is why the ``pot-good'' or ``pot-split'' term does not appear in \cite[Theorem 3.1]{HM99}.
\end{rem}

%%%%%%%%%%%%%%%%%%%%%%
\subsection{Supersingular elliptic curves}\label{ss:ell_ss}
%%%%%%%%%%%%%%%%%%%%%%

As a final application, let us sketch how to deduce Kida's formula for supersingular elliptic curves.
This demonstrates how Theorem \ref{thm:Kida_alg} can also be applied to non-ordinary representations.
Our result is basically the same as \cite[Proposition 5.2]{Lim22} (with minor differences such as Remark \ref{rem:HM_diff}).
Also, when the base field $F$ is $\Q$, the result should coincide with that of \cite[Theorem 4.3]{PW06} specialized to the case the modular form is associated to an elliptic curve, but our formulation is more explicit.
When $F \neq \Q$, we need the author's work \cite {Kata_15} on the precise structures of $\pm$-local conditions.

First we briefly introduce the (multiply-)signed Selmer groups.
The concept of the signed Selmer groups is initiated by Kobayashi \cite{Kob03}.
We follow the notation in \cite{Kata_15}.

Let $E$ be an elliptic curve over a number field $F$.
We suppose that $E$ has good supersingular reduction at all $p$-adic primes of $F$ (just for simplicity, we do not deal with the case where ordinary and supersingular reductions are mixed).
To employ the $\pm$-theory, we moreover assume the following.

\begin{ass}[{\cite[Assumption 2.1]{Kata_15}}]\label{ass:ss}
\begin{itemize}
\item[(1)]
The prime $p$ splits completely in $F/\Q$.
\item[(2)]
For any $p$-adic prime $\fp$ of $F$, we have $a_{\fp}(E) = 0$.
\end{itemize}
\end{ass}

Here, as usual, we set $a_{\fp}(E) = (1 + \# \F_{\fp}) - \# \wtil{E}_{\fp}(\F_{\fp})$, where $\F_{\fp}$ denotes the residue field of $k$ at $\fp$ (which is a prime field by (1)) and $\wtil{E}_{\fp}$ denotes the reduction of $E$ at $\fp$.
Condition (2) automatically holds if $p \geq 5$ by the Hasse bound.

Set $F_{\infty} = F(\mu_{p^{\infty}})$.
Let $K_{\infty}/F_{\infty}$ be a finite extension.
We assume that $K_{\infty}/F$ is abelian in order to use results in \cite{Kata_15}.
Then as in \cite[Definition 2.3]{Kata_15}, for each prime $\fp \in S_p(F)$ and for each sign $\pm$, we have a submodule $E^{\pm}(K_{\infty, \fp})$ of $E(K_{\infty, \fp})$.

Let us take a multi-sign $\epsilon = (\epsilon_{\fp})_{\fp} \in \prod_{\fp \in S_p(F)} \{+, -\}$.
As in \cite[Definition 2.5]{Kata_15}, we define the $\epsilon$-Selmer group $\Sel^{\epsilon}(E/K_{\infty})$ as the kernel of the natural map
\[
H^1(K_{\infty}, E[p^{\infty}])
\to
\bigoplus_{\fp \in S_p(F)} \frac{H^1(K_{\infty, \fp}, E[p^{\infty}])}{E^{\epsilon_{\fp}}(K_{\infty, \fp}) \otimes \Q_p/\Z_p} 
\oplus
\bigoplus_{v \nmid p} H^1(K_{\infty, v}, E[p^{\infty}]),
\]
where $v$ runs over the non-$p$-adic finite primes of $F$.

Now let $k_{\infty}$ be an intermediate field of $K_{\infty}/F_{\infty}$ such that $K_{\infty}/k_{\infty}$ is a $p$-extension.
We also have the $\epsilon$-Selmer group $\Sel^{\epsilon}(E/k_{\infty})$.

\begin{thm}\label{thm:main_ell_ss}
In the situation described above, the statement of Theorem \ref{thm:main_ell} holds after replacing $\Sel$ by $\Sel^{\epsilon}$.
\end{thm}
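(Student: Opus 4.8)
The plan is to follow the template of \S\ref{sec:pf_thm}, with two changes: the Selmer complex must be built using the signed local conditions at the $p$-adic primes, and --- since those conditions do not arise from a Galois-stable submodule $T_\fp^+$, so Theorem \ref{thm:main_ord} does not apply --- one invokes the purely algebraic Theorem \ref{thm:Kida_alg} directly.

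First I would arrange the base fields as in \S\ref{ss:pf_ord_thm}, the only new wrinkle being that $F_\infty = F(\mu_{p^\infty})$ need not be a $\Z_p$-extension of $F$: I would pick a number field $K_0 \subseteq k_\infty$ with $k_\infty/K_0$ a $\Z_p$-extension and a number field $K$ with $K_\infty = K k_\infty$, $K \cap k_\infty = K_0$, so that $\Gal(K_\infty/K_0) \cong \Z_p \times G$ with $G = \Gal(K_\infty/k_\infty)$, and hence $\cR := \Z_p[[\Gal(K_\infty/K_0)]] \cong \Lambda[G]$ with $\Lambda = \Z_p[[\Gal(K_\infty/K)]] \cong \Z_p[[T]]$. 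Using Nekov\'{a}\v{r}'s formalism \cite{Nek06} I would then construct a Selmer complex $C^\epsilon \in \DP(\cR)$ fitting in a triangle $C^\epsilon \to \RG_{\Iw}(k_S/K_\infty, T_pE) \to \bigoplus_{\fp \in S_p(F)} D_\fp^{\epsilon_\fp}$, where $D_\fp^{\epsilon_\fp}$ is the cone of the inclusion $U_\fp^{\epsilon_\fp} \hookrightarrow \RG_{\Iw}(K_{\infty,\fp}, T_pE)$ of the $\epsilon_\fp$-local condition; the local conditions at $v \nmid p$ are strict, so those primes do not appear in the triangle (they re-enter through global duality, exactly as in property (iv) of \S\ref{ss:Sel_cpx}). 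The substantive input here is the author's work \cite{Kata_15}, which provides: (a) that each $U_\fp^{\epsilon_\fp}$ is perfect over $\cR$, so $C^\epsilon$ is perfect; (b) that $D_\fp^{\epsilon_\fp}$ has the same $\cR$-rank as the local quotient complex in the ordinary case, so the Euler characteristic of $C^\epsilon$ vanishes (the supersingular analogue of Assumption \ref{ass:Sel_rk}); and, crucially, (c) that the formation of $U_\fp^{\epsilon_\fp}$ commutes with $\cR_{k_\infty} \otimesL_{\cR}(-)$, so that $\ol{C^\epsilon} := \cR_{k_\infty} \otimesL_{\cR} C^\epsilon$ is the signed Selmer complex attached to $k_\infty$.

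Granting these, the computation of $H^i(C^\epsilon)$ and the bootstrapping of the $\mu = 0$ statement proceed exactly as in Proposition \ref{prop:C_coh}, Lemma \ref{lem:main_pf1} and \S\ref{ss:pf_ord_thm}: one gets $H^2(C^\epsilon) = \Sel^\epsilon(E/K_\infty)^\vee$ up to finite corrections from the local $H^0$'s, $H^1(C^\epsilon)_{\tors}$ injects into $H^0(K_\infty, T_pE) = 0$ (since $E(K_\infty)[p^\infty]$ is finite), $H^3(C^\epsilon)$ has $\mu = 0$, and the vanishing Euler characteristic forces $\mu(H^1(C^\epsilon)) = 0$ once $\mu(H^2(C^\epsilon)) = 0$; likewise for $\ol{C^\epsilon}$. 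Theorem \ref{thm:Kida_alg}, applied to $C^\epsilon$ with the group $G$, then yields the equivalence of $\mu(\Sel^\epsilon(E/K_\infty)^\vee) = 0$ and $\mu(\Sel^\epsilon(E/k_\infty)^\vee) = 0$, and, under these, the identity $\lambda(C^\epsilon) = [K_\infty:k_\infty]\,\lambda(\ol{C^\epsilon})$. It remains to unwind $\lambda(C^\epsilon) = \lambda(H^2(C^\epsilon)) - \lambda(H^1(C^\epsilon)) - \lambda(H^3(C^\epsilon))$: the contributions at $v \nmid p$ are the quantities $\lambda(E(K_{\infty,w})[p^\infty]^\vee)$ evaluated by Proposition \ref{prop:HM1}, exactly as in the ordinary case; the contributions coming from $D_\fp^{\epsilon_\fp}$ at the $p$-adic primes have vanishing $\lambda$-invariant, since the relevant cohomology is finite --- there being no supersingular analogue of the reduction torsion $\tilde{E}_v[p^\infty]$, a fact again read off from \cite{Kata_15} --- just as the $p$-adic terms vanished in the proof of Theorem \ref{thm:main_ell}. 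Dividing by $\#G = [K_\infty:k_\infty]$ and re-indexing the $v \nmid p$ sum over primes $w$ of $K_\infty$ then reproduces verbatim the formula of Theorem \ref{thm:main_ell} with $\Sel$ replaced by $\Sel^\epsilon$.

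The main obstacle is concentrated entirely in the construction of $C^\epsilon$ together with properties (a)--(c) of the signed local conditions, and above all in (c), the compatibility with the base change $K_\infty \rightsquigarrow k_\infty$. For $F = \Q$ these facts are close to Kobayashi's original theory of $\pm$-Coleman maps; for general $F$ (permitted here under Assumption \ref{ass:ss}) they rely on the precise structural description of the $\pm$-local conditions in \cite{Kata_15}. Once that input is granted, everything else is a mechanical repetition of \S\ref{sec:pf_thm}, with Theorem \ref{thm:Kida_alg} used in place of Theorem \ref{thm:main_ord}.
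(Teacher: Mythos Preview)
Your proposal is correct and follows the same route as the paper: build a signed Selmer complex with the $\pm$-local conditions at $p$, verify its perfectness, zero Euler characteristic, and base-change compatibility via \cite[Proposition 4.4]{Kata_15}, apply Theorem \ref{thm:Kida_alg}, and evaluate the non-$p$-adic local terms by Proposition \ref{prop:HM1}. The only presentational difference is that the paper realizes the local complexes concretely as single modules in degree $1$ (using $\RG_{\Iw}(K_{\infty,\fp},T) \simeq H^1_{\Iw}(K_{\infty,\fp},T)[-1]$ under Assumption \ref{ass:ss}) and invokes $H^0(K_\infty,E[p^\infty])=0$ directly from that assumption, so that in fact $H^i(C)=0$ for $i\neq 2$.
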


\begin{proof}
This can be shown by modifying the proof of Theorem \ref{thm:main_ord}.
We consider the representation $T = T_pE$.
The main trouble is that we do not have $T_{\fp}^+$ for $\fp \mid p$.

We remedy this as follows.
For each $\fp \in S_p(F)$, we have a tautological exact sequence
\[
0 \to \left(\frac{H^1(K_{\infty, \fp}, E[p^{\infty}])}{E^{\epsilon_{\fp}}(K_{\infty, \fp}) \otimes \Q_p/\Z_p}  \right)^{\vee}
\to H^1(K_{\infty, \fp}, E[p^{\infty}])^{\vee}
\to (E^{\epsilon_{\fp}}(K_{\infty, \fp}) \otimes \Q_p/\Z_p)^{\vee}
\to 0.
\]
On the other hand, we have 
\[
\RG_{\Iw}(K_{\infty, \fp}, T)
\simeq
H^1_{\Iw}(K_{\infty, \fp}, T)[-1]
\]
by Assumption \ref{ass:ss} and
\[
H^1_{\Iw}(K_{\infty, \fp}, T)
\simeq  H^1(K_{\infty, \fp}, E[p^{\infty}])^{\vee}
\]
by the local duality.
Now let us define
\[
\RG_{\Iw}(K_{\infty, \fp}, T)^+ 
= \left(\frac{H^1(K_{\infty, \fp}, E[p^{\infty}])}{E^{\epsilon_{\fp}}(K_{\infty, \fp}) \otimes \Q_p/\Z_p}  \right)^{\vee}[-1]
\]
and
\[
\RG_{\Iw}(K_{\infty, \fp}, T)^-
= (E^{\epsilon_{\fp}}(K_{\infty, \fp}) \otimes \Q_p/\Z_p)^{\vee}[-1].
\]
We use $\RG_{\Iw}(K_{\infty, \fp}, T)^{\pm}$ as alternatives to $\RG_{\Iw}(K_{\infty, \fp}, T_{\fp}^{\pm})$.

Using this data, we define a complex $C$ in the same way as in \S \ref{ss:Sel_cpx}: we have a triangle
\[
C 
\to \RG_{\Iw}(k_S/K_{\infty}, T)
\to \bigoplus_{\fp \in S_p(F)} \RG_{\Iw}(K_{\infty, \fp}, T_{\fp})^-.
\]
Then properties (i)--(iv) still hold.
To show them, we only have to use \cite[Proposition 4.4]{Kata_15} on the module structure of $(E^{\pm}(K_{\infty, \fp}) \otimes \Q_p/\Z_p)^{\vee}$ (i.e., the finiteness of the projective dimension, the computation of the rank, and the compatibility with respect to the field extension).

Then we can argue in a similar way as in \S \ref{ss:pf_ord_thm}.
We omit the discussion on $\mu = 0$.
We have $H^0(K_{\infty}, E[p^{\infty}]) = 0$ by Assumption \ref{ass:ss}.
Then, assuming $\mu = 0$, we obtain $H^i(C) = 0$ for $i \neq 2$ and an exact sequence
\[
0 
\to \Sel^{\epsilon}(E/K_{\infty})^{\vee} 
\to H^2(C) 
\to \bigoplus_{v \in S_f, v \nmid p} H^0(K_{\infty, v}, E[p^{\infty}])^{\vee}
\to 0,
\]
which are a counterpart of Proposition \ref{prop:C_coh}.
Applying Theorem \ref{thm:Kida_alg}, we obtain a relation between $\lambda(\Sel^{\epsilon}(E/K_{\infty})^{\vee})$ and $\lambda(\Sel^{\epsilon}(E/k_{\infty})^{\vee})$.
The contribution of the local rational points can be determined by Proposition \ref{prop:HM1}.
As a consequence, we obtain the theorem.
\end{proof}

%%%%%%%%%%%%%%%%%%%%%%
\section{Kida's formula for $p$-adic Lie extensions}\label{sec:Lie}
%%%%%%%%%%%%%%%%%%%%%%

In this section, we discuss Kida's formula in the context of non-commutative Iwasawa theory.

%%%%%%%%%%%%%%%%%%%%%%
\subsection{The key algebraic theorem}\label{ss:Kida_alg}
%%%%%%%%%%%%%%%%%%%%%%

This subsection proceeds in parallel with \S \ref{sec:alg}.
Let $\cG$ be a pro-$p$, $p$-adic Lie group.
Let $\cH$ be a closed normal subgroup of $\cG$ such that $\Gamma := \cG/\cH \simeq \Z_p$.
We assume that $\cG$ has no $p$-torsion or, equivalently, $\cH$ has no $p$-torsion.

The rings $\Z_p[[\cG]]$ and $\Z_p[[\Gamma]]$ play the roles of $\Lambda[G]$ and $\Lambda$ in \S \ref{sec:alg}, respectively.
Note that, since $\cH$ has no $p$-torsion, the situations are disjoint, unless $\cH$ is trivial.
It seems possible to modify the following argument to unify the two cases by allowing $\cH$ has $p$-torsion.
However, we divide them in order to simplify the argument.

We have a notion of $\mu$-invariants for finitely generated $\Z_p[[\cG]]$-modules (e.g., \cite[\S 1.2]{How02}).
If the module is finitely generated over $\Z_p[[\cH]]$, then it satisfies $\mu = 0$.
However, the converse does not hold in general unfortunately, and what is convenient for our purpose is rather being finitely generated over $\Z_p[[\cH]]$.
For this reason, we avoid mentioning the $\mu$-invariants for $\Z_p[[\cG]]$-modules.

Also, instead of $\lambda$-invariants, we explicitly use the notion of the $\Z_p[[\cH]]$-rank, denoted by $\rank_{\Z_p[[\cH]]}(M)$, for a finitely generated $\Z_p[[\cH]]$-module $M$.
The rank is defined as usual by base-changing to the total ring of fractions (see, e.g., \cite[\S 1.1]{How02}).
For any finitely generated $\Z_p[[\cH]]$-module $M$, we have an explicit formula (see \cite[Theorem 1.1]{How02})
\begin{equation}\label{eq:rank_H}
\rank_{\Z_p[[\cH]]}(M) = \sum_{i = 0}^{\infty} (-1)^i \rank_{\Z_p} \left(\Tor_i^{\Z_p[[\cH]]}(\Z_p, M) \right),
\end{equation}
which may be regarded as a definition.
This formula basically results from the observations that the right hand side is equal to the natural rank when $M$ is free over $\Z_p[[\cH]]$ and that the right hand side satisfies the additivity property for exact sequences.
Note also that the right hand side is well-defined because $\Z_p[[\cH]]$ is noetherian and the global dimension of $\Z_p[[\cH]]$ is finite (in fact, is equal to $\dim \cH + 1$) since $\cH$ is $p$-torsion-free.

It is convenient to introduce the {\it canonical Ore set} \cite[\S 2]{CFKSV05}, which is denoted by $\fS$ in the following.
By definition $\fS$ is the set of elements $f \in \Z_p[[\cG]]$ such that the quotient module $\Z_p[[\cG]]/\Z_p[[\cG]]f$ is finitely generated over $\Z_p[[\cH]]$.
It is known that, for a finitely generated $\Z_p[[\cG]]$-module, being $\fS$-torsion is equivalent to being finitely generated over $\Z_p[[\cH]]$ (see \cite[Proposition 2.3]{CFKSV05}).
The set $\fS$ is indeed a multiplicatively closed set satisfying the (both left and right) Ore condition (see \cite[Theorem 2.4]{CFKSV05}).

Note that, using the perspective of \cite{CFKSV05}, we can interpret $\rank_{\Z_p[[\cH]]}(M)$ for $M$ that is finitely generated over $\Z_p[[\cH]]$ by using the {\it Akashi series}.
Indeed, \eqref{eq:rank_H} implies that $\rank_{\Z_p[[\cH]]}(M)$ coincides with the $\lambda$-invariant of the Akashi series of $M$ defined in \cite[\S 4, Equation (40)]{CSS03} or \cite[\S 3, Equation (37)]{CFKSV05}.

As an analogue of Proposition \ref{prop:alg1}, we observe the following.

\begin{prop}\label{prop:alg1A}
Let $M$ be a finitely generated $\Z_p[[\cG]]$-module.
Set $\ol{M} = \Z_p[[\Gamma]] \otimes_{\Z_p[[\cG]]} M$.
\begin{itemize}
\item[(1)]
The module $M$ is finitely generated over $\Z_p[[\cH]]$ if and only if $\mu(\ol{M}) = 0$.
\item[(2)]
Suppose $\pd_{\Z_p[[\cG]]}(M) \leq 1$.
If the equivalent conditions in (1) hold, then we have $\rank_{\Z_p[[\cH]]}(M) = \lambda(\ol{M})$.
\end{itemize}
\end{prop}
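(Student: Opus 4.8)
The plan is to follow the proof of Proposition~\ref{prop:alg1}, with the infinite quotient $\Gamma = \cG/\cH$ playing the role of the finite group $G$. The essential new feature is that $\Gamma$ is infinite, so $\ol{M}$ is in general a genuinely nontrivial torsion $\Z_p[[\Gamma]]$-module rather than a free $\Z_p$-module; accordingly I would route the rank computation through the homology groups $\Tor_i^{\Z_p[[\cH]]}(\Z_p, M)$ and the formula \eqref{eq:rank_H}, instead of the direct freeness argument available in the finite case.

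For part (1), I would first identify $\ol{M}$ with the module of $\cH$-coinvariants $M/I_{\cH}M$, where $I_{\cH} \subseteq \Z_p[[\cH]]$ is the augmentation ideal (the identification $\Z_p[[\Gamma]]\otimes_{\Z_p[[\cG]]}M \cong M/I_{\cH}M$ uses the normality of $\cH$). Since the maximal ideal of the local ring $\Z_p[[\cH]]$ is $\fm_{\cH} = (p)+I_{\cH}$, this gives $M/\fm_{\cH}M \cong \ol{M}/p\ol{M}$. By topological Nakayama's lemma over $\Z_p[[\cH]]$, the module $M$ is finitely generated over $\Z_p[[\cH]]$ if and only if $M/\fm_{\cH}M$ is finite, i.e.\ if and only if $\ol{M}/p\ol{M}$ is finite; and since $\ol{M}$ is finitely generated over $\Z_p[[\Gamma]]\cong\Z_p[[T]]$, the structure theorem shows this is equivalent to $\mu(\ol{M})=0$.

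For part (2), assume the conditions of (1) and $\pd_{\Z_p[[\cG]]}(M)\leq 1$. As $\Z_p[[\cG]]$ is local, finitely generated projectives are free, so I would choose a resolution $0\to\Z_p[[\cG]]^{r_1}\xrightarrow{\phi}\Z_p[[\cG]]^{r_0}\to M\to 0$. Since a free $\Z_p[[\cG]]$-module is flat over $\Z_p[[\cH]]$, the complex $\Z_p\otimesL_{\Z_p[[\cH]]}M$ is computed by $[\Z_p[[\Gamma]]^{r_1}\xrightarrow{\ol{\phi}}\Z_p[[\Gamma]]^{r_0}]$, so $\Tor_0^{\Z_p[[\cH]]}(\Z_p,M)=\ol{M}=\Coker(\ol{\phi})$, $\Tor_1^{\Z_p[[\cH]]}(\Z_p,M)=\Ker(\ol{\phi})$, and the higher Tor vanish. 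Now $\Ker(\ol{\phi})$ is finitely generated over $\Z_p$ (it is annihilated by $I_{\cH}$ and finitely generated over the Noetherian ring $\Z_p[[\cH]]$), hence of $\Z_p[[\Gamma]]$-rank zero, while $\mu(\ol{M})=0$ forces $\Coker(\ol{\phi})$ to be $\Z_p[[\Gamma]]$-torsion; comparing ranks over $\Z_p[[\Gamma]]$ gives $r_1=r_0$, and then $\ol{\phi}$ is a square matrix over the domain $\Z_p[[\Gamma]]$ with torsion cokernel, hence injective, so $\Ker(\ol{\phi})=0$. Since $\ol{M}$ is finitely generated over $\Z_p$ with $\rank_{\Z_p}(\ol{M})=\lambda(\ol{M})$, formula \eqref{eq:rank_H} yields $\rank_{\Z_p[[\cH]]}(M)=\rank_{\Z_p}(\ol{M})=\lambda(\ol{M})$.

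The steps I expect to require the most care are the vanishing of $\Tor_1$ together with the equality $r_1=r_0$ --- handled above through the observation that $\Tor_1$ is automatically a finitely generated $\Z_p$-module --- and the flat base-change identification $\Z_p\otimesL_{\Z_p[[\cH]]}M\simeq\Z_p[[\Gamma]]\otimesL_{\Z_p[[\cG]]}M$, which rests on the standard fact that $\Z_p[[\cG]]$ is flat (indeed free) as a $\Z_p[[\cH]]$-module. The remainder is bookkeeping parallel to Proposition~\ref{prop:alg1}.
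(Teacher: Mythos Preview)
Your proof is correct, and for part (1) it is essentially the paper's argument spelled out in more detail (both reduce to topological Nakayama over the local ring $\Z_p[[\cH]]$).

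For part (2) the two proofs diverge at one point. Both choose a two-term free $\Z_p[[\cG]]$-resolution $0 \to F' \to F \to M \to 0$ and aim to show that after applying $\Z_p[[\Gamma]]\otimes_{\Z_p[[\cG]]}(-)$ the map $\ol{F'}\to\ol{F}$ stays injective, so that $\Tor_1^{\Z_p[[\cH]]}(\Z_p,M)=0$ and formula \eqref{eq:rank_H} gives the result. The paper establishes the rank equality $\rank F' = \rank F$ by localizing at the canonical Ore set $\fS$: since $M$ is finitely generated over $\Z_p[[\cH]]$ it is $\fS$-torsion, so $F'_{\fS}\simeq F_{\fS}$. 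You instead argue directly that $\Tor_1^{\Z_p[[\cH]]}(\Z_p,M)$ is finitely generated over $\Z_p$ (being annihilated by $I_{\cH}$ and finitely generated over the Noetherian ring $\Z_p[[\cH]]$, which uses the hypothesis that $M$ itself is finitely generated over $\Z_p[[\cH]]$), hence has $\Z_p[[\Gamma]]$-rank zero, and then count ranks in the four-term exact sequence to force $r_0=r_1$. Your route is more elementary in that it avoids the Ore-set formalism entirely; the paper's route is quicker once $\fS$ has been introduced, and ties the argument to the ambient non-commutative Iwasawa-theoretic framework. Either way the conclusion and the remaining steps coincide.
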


\begin{proof}
(1)
This follows from topological Nakayama's lemma as in Proposition \ref{prop:alg1}(1).

(2) 
We take an exact sequence $0 \to F' \to F \to M \to 0$ with finitely generated free $\Z_p[[\cG]]$-modules $F$, $F'$.
Since $M$ is $\fS$-torsion, by localizing with respect to $\fS$, we see that the ranks of $F$ and $F'$ are the same.
By base-changing to $\Z_p[[\Gamma]]$, we obtain an exact sequence $0 \to \ol{F'} \to \ol{F} \to \ol{M} \to 0$.
Here, the injectivity of $\ol{F'} \to \ol{F}$ holds since the ranks over $\Z_p[[\Gamma]]$ are the same and $\mu(\ol{M}) = 0$.
Thus, we have $\Tor_i^{\Z_p[[\cH]]}(\Z_p, M) = 0$ for $i \geq 1$, so \eqref{eq:rank_H} implies
\[
\rank_{\Z_p[[\cH]]}(M) = \rank_{\Z_p} (\ol{M}) = \lambda(\ol{M}).
\]
This completes the proof.
\end{proof}

Let us prepare a lemma for the next theorem.

\begin{lem}\label{lem:exists_morph}
Let $\varphi: F' \to F$ be a homomorphism between $\Z_p[[\cG]]$-modules.
Suppose that $F$ is finitely generated and free over $\Z_p[[\cG]]$ and that the cokernel of $\varphi$ is finitely generated over $\Z_p[[\cH]]$.
Then we can construct a commutative diagram
\[
\xymatrix{
	F \ar[r]^{\psi} \ar[d]_{*}
	& F \ar[d]^{=}\\
	F' \ar[r]_{\varphi}
	& F
}
\]
such that the cokernel of $\psi$ is finitely generated over $\Z_p[[\cH]]$.
\end{lem}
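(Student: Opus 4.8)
The plan is to mimic the proof of Theorem \ref{thm:Kida_alg}, where a nonzero annihilator $f \in \Lambda$ of the relevant cohomology group was used; in the present non-commutative setting the role of such an $f$ is played by elements of the canonical Ore set $\fS$. First I would record the elementary fact that, since $\fS$ is a (two-sided) Ore set and $\Cok(\varphi)$ is finitely generated over $\Z_p[[\cH]]$, hence $\fS$-torsion by \cite[Proposition 2.3]{CFKSV05}, every element of $\Cok(\varphi)$ is annihilated on the left by some element of $\fS$.

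Next, writing $F = \Z_p[[\cG]]^n$ with standard basis $e_1, \dots, e_n$ and letting $\bar e_j$ denote the image of $e_j$ in $\Cok(\varphi) = F/\varphi(F')$, I would choose for each $j$ an element $s_j \in \fS$ with $s_j \bar e_j = 0$, so that $s_j e_j \in \varphi(F')$, and define $\psi \colon F \to F$ to be the $\Z_p[[\cG]]$-linear map with $\psi(e_j) = s_j e_j$. Since the $e_j$ span free direct summands of $F$, the image $\psi(F) = \bigoplus_j \Z_p[[\cG]] s_j e_j$ is contained in $\varphi(F')$ and
\[
\Cok(\psi) \;\cong\; \bigoplus_{j=1}^{n} \Z_p[[\cG]]/\Z_p[[\cG]] s_j ;
\]
each summand on the right is finitely generated over $\Z_p[[\cH]]$ by the very definition of $\fS$, and a finite direct sum of such modules is again finitely generated over $\Z_p[[\cH]]$, so $\Cok(\psi)$ has the required property.

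Finally, to obtain the vertical map $*$ together with the commutativity of the square, I would view $\psi$ as a map $F \to \varphi(F')$ and use that $F$ is free, hence projective, to lift it along the surjection $\varphi \colon F' \twoheadrightarrow \varphi(F')$; any such lift $* \colon F \to F'$ satisfies $\varphi \circ * = \psi$, which is exactly the asserted diagram. I do not expect a serious obstacle: the only two points that require care are the non-commutative bookkeeping behind the first paragraph (one must use that $\fS$ is an Ore set, cf.\ \cite[Theorem 2.4]{CFKSV05}, for the notion of $\fS$-torsion to behave well and yield a left annihilator in $\fS$ for each basis vector) and the observation that $\psi$, built ``diagonally'' on a free basis, has a cokernel that visibly decomposes as a direct sum of modules of the shape $\Z_p[[\cG]]/\Z_p[[\cG]]s$ with $s \in \fS$ — which is precisely what forces it to stay finitely generated over $\Z_p[[\cH]]$.
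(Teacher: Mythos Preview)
Your proposal is correct and follows essentially the same approach as the paper: choose a basis of $F$, use that $\Cok(\varphi)$ is $\fS$-torsion to find left annihilators $s_j \in \fS$ of the basis vectors modulo $\varphi(F')$, define $\psi$ diagonally, and lift via projectivity of $F$. The only difference is that you spell out the passage from ``finitely generated over $\Z_p[[\cH]]$'' to ``$\fS$-torsion'' via \cite[Proposition 2.3]{CFKSV05}, whereas the paper invokes this implicitly.
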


\begin{proof}
Let $x_1, \dots, x_n$ be a basis of $F$ as a $\Z_p[[\cG]]$-module.
For each $1 \leq i \leq n$, by assumption there is an element $f_i \in \fS$ such that $f_i x_i$ is in the image of $\varphi$.
We define the map $\psi$ by sending $x_i$ to $f_i x_i$.
Then the map $*$ that makes the diagram commutative exists by the choice of $f_i$.
Also, the cokernel of $\psi$ is isomorphic to $\bigoplus_{i = 1}^n \Z_p[[\cG]]/\Z_p[[\cG]]f_i$, which is finitely generated over $\Z_p[[\cH]]$ by $f_i \in \fS$.
\end{proof}

Now we move on to complexes.
As in \S \ref{ss:alg_cpx}, for a perfect complex $C$ over $\Z_p[[\cG]]$ whose cohomology groups are all finitely generated over $\Z_p[[\cH]]$, we define
\[
\rank_{\Z_p[[\cH]]}(C) = \sum_{i \in \Z} (-1)^i \rank_{\Z_p[[\cH]]}(H^i(C)).
\]

The following is the analogue of Theorem \ref{thm:Kida_alg}.

\begin{thm}\label{thm:Kida_algA}
Let $C$ be a perfect complex over $\Z_p[[\cG]]$ and set $\ol{C} = \Z_p[[\Gamma]] \otimesL_{\Z_p[[\cG]]} C$.
Then the cohomology groups of $C$ are all finitely generated over $\Z_p[[\cH]]$ if and only if $\mu = 0$ holds for $\ol{C}$.
If these equivalent conditions hold, then we have $\rank_{\Z_p[[\cH]]}(C) = \lambda(\ol{C})$.
\end{thm}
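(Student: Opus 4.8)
The plan is to transcribe the proof of Theorem~\ref{thm:Kida_alg} almost verbatim, replacing multiplication by a central annihilator with Lemma~\ref{lem:exists_morph} and Proposition~\ref{prop:alg1} with Proposition~\ref{prop:alg1A}. First I would fix a quasi-isomorphism $C \simeq [C^a \to \cdots \to C^b]$ with each $C^i$ finitely generated free over $\Z_p[[\cG]]$, and observe that since $H^b(\ol C) \simeq \ol{H^b(C)}$ (right-exactness of $\Z_p[[\Gamma]] \otimes_{\Z_p[[\cG]]} -$ on the top cohomology), Proposition~\ref{prop:alg1A}(1) applied to $M = H^b(C)$ already shows that $H^b(C)$ is finitely generated over $\Z_p[[\cH]]$ if and only if $\mu(H^b(\ol C)) = 0$. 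As preparation I would also record the straightforward analogues of Lemmas~\ref{lem:Ser} and~\ref{lem:alg2}: along a triangle in $\DP(\Z_p[[\cG]])$, the property that all cohomology is finitely generated over $\Z_p[[\cH]]$ satisfies two-out-of-three, and $\rank_{\Z_p[[\cH]]}(-)$ is additive; both come from the long exact cohomology sequence together with $\Z_p[[\cH]]$ being Noetherian and formula~\eqref{eq:rank_H}. The proof then runs by induction on the length $b - a$.

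For the base case $b - a \le 1$, write $C \simeq [C^{b-1} \xrightarrow{\varphi} C^b]$. Assuming one of the two conditions holds, $C$ is $\fS$-torsion or $\ol C$ is $\Z_p[[\Gamma]]$-torsion; in either case $C^{b-1}$ and $C^b$ have the same $\Z_p[[\cG]]$-rank. Since a module finitely generated over $\Z_p[[\cH]]$ is $\fS$-torsion, hence $\Z_p[[\cG]]$-torsion, the cokernel $H^b(C)$ is $\Z_p[[\cG]]$-torsion, so $\Ker\varphi$ is a torsion submodule of the free module $C^{b-1}$ and therefore $H^{b-1}(C) = 0$; likewise $H^{b-1}(\ol C) = 0$. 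Now $0 \to C^{b-1} \to C^b \to H^b(C) \to 0$ shows $\pd_{\Z_p[[\cG]]} H^b(C) \le 1$, and Proposition~\ref{prop:alg1A} applied to $M = H^b(C)$ gives both the equivalence and $\rank_{\Z_p[[\cH]]}(C) = (-1)^b \rank_{\Z_p[[\cH]]}(H^b(C)) = (-1)^b \lambda(H^b(\ol C)) = \lambda(\ol C)$.

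For the inductive step $b - a \ge 2$, I would note that $H^b(C)$ is finitely generated over $\Z_p[[\cH]]$ under either hypothesis (by the $H^b$-equivalence above), apply Lemma~\ref{lem:exists_morph} to $\varphi : C^{b-1} \to C^b$ to get $\psi : C^b \to C^b$ with cokernel finitely generated over $\Z_p[[\cH]]$ and a lift $\ast$ with $\varphi \circ \ast = \psi$, form $C' = [C^b \xrightarrow{\psi} C^b]$ in degrees $b-1,b$ together with the evident morphism $g : C' \to C$ (the identity in degree $b$ and $\ast$ in degree $b-1$), and let $C''$ be its cone, yielding triangles $C' \to C \to C''$ and $\ol{C'} \to \ol C \to \ol{C''}$. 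Because $\Imag\psi \subseteq \Imag\varphi$ one has $H^b(C'') = 0$, so $C''$ is a free complex in degrees $a,\dots,b-1$ to which the induction hypothesis applies; because the elements $f_i \in \fS$ produced by Lemma~\ref{lem:exists_morph} are non-zero-divisors in the domain $\Z_p[[\cG]]$, the map $\psi$ is injective, so $C'$ is a base-case complex and Proposition~\ref{prop:alg1A} gives $\mu(\ol{C'}) = 0$ and $\rank_{\Z_p[[\cH]]}(C') = \lambda(\ol{C'})$. The equivalence for $C$ then follows from the two-out-of-three property and the equivalence for $C''$, and the rank identity follows by adding those for $C'$ and $C''$ and using additivity of $\rank_{\Z_p[[\cH]]}$ and of $\lambda$ along triangles.

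The main obstacle is not the bookkeeping of the induction but the preliminary ring-theoretic facts that make it run: that a $\Z_p[[\cG]]$-module finitely generated over $\Z_p[[\cH]]$ is $\Z_p[[\cG]]$-torsion, that $\Z_p[[\cG]]$ is a Noetherian domain in which the elements of $\fS$ are non-zero-divisors (so that submodules of free modules are torsion-free and $\psi$ is injective), and the additivity of the $\Z_p[[\cH]]$-rank along triangles. These are standard in non-commutative Iwasawa theory but must be invoked with care; once they are available, the argument is a faithful transcription of the proof of Theorem~\ref{thm:Kida_alg}, with Lemma~\ref{lem:exists_morph} supplying exactly what multiplication by a central element supplied there.
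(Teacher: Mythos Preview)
Your proposal is correct and is precisely the approach the paper takes: the paper's proof of Theorem~\ref{thm:Kida_algA} consists of a single sentence pointing back to the proof of Theorem~\ref{thm:Kida_alg}, with Proposition~\ref{prop:alg1A} replacing Proposition~\ref{prop:alg1} and Lemma~\ref{lem:exists_morph} supplying the auxiliary morphism $C' \to C$ in the inductive step. Your write-up spells out exactly these substitutions, together with the necessary ring-theoretic facts (no zero-divisors in $\Z_p[[\cG]]$ for $p$-torsion-free $\cG$, additivity of $\rank_{\Z_p[[\cH]]}$ via \eqref{eq:rank_H}) that the paper leaves implicit.
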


\begin{proof}
We can prove this theorem exactly in the same way as Theorem \ref{thm:Kida_alg}.
We apply Proposition \ref{prop:alg1A} instead of Proposition \ref{prop:alg1}.
In order to proceed the induction argument, we have to construct a complex $C'$ and a morphism from $C'$ to $C$, which is possible by Lemma \ref{lem:exists_morph}.
\end{proof}

\begin{rem}
In the situation of Theorem \ref{thm:Kida_algA}, we see that $C$ is a perfect complex over $\Z_p[[\cH]]$ and $\rank_{\Z_p[[\cH]]}(C)$ can be regarded as the definition of the Euler characteristic of $C$ over $\Z_p[[\cH]]$.
Then Theorem \ref{thm:Kida_algA} claims simply that the Euler characteristic remains unchanged after base-change.
This interpretation is also valid for Theorem \ref{thm:Kida_alg}.
\end{rem}

%%%%%%%%%%%%%%%%%%%%%%
\subsection{Arithmetic applications}\label{ss:Kida_inf}
%%%%%%%%%%%%%%%%%%%%%%

As in \S \ref{ss:state_thm}, we consider an odd prime number $p$, a number field $k$, and a Galois representation $T$ with ordinarity data $\cT^+$ satisfying Assumption \ref{ass:Sel_rk}.
We also assume to be given a $\Z_p$-extension $k_{\infty}/k$ satisfying Assumption \ref{ass:spl_fin}.

Let $K_{\infty}/k$ be a pro-$p$, $p$-adic Lie extension satisfying $K_{\infty} \supset k_{\infty}$.
Set $\cG = \Gal(K_{\infty}/k)$ and $\cH = \Gal(K_{\infty}/k_{\infty})$, so $\Gamma = \cG/\cH = \Gal(k_{\infty}/k)$.
We assume that $\cG$ has no $p$-torsion and $\dim \cG \geq 2$.
We finally assume that $K_{\infty}/k_{\infty}$ is unramified at almost all finite primes.

We then consider the Selmer group $\Sel_{\cT^+}(A/K_{\infty})$ defined in the same way as before.
The following is the analogue of Theorem \ref{thm:main_ord}.

\begin{thm}\label{thm:main_ordA}
The module $\Sel_{\cT^+}(A/K_{\infty})^{\vee}$ is finitely generated over $\Z_p[[\cH]]$ if and only if $\mu(\Sel_{\cT^+}(A/k_{\infty})^{\vee}) = 0$.
If these equivalent conditions hold, then we have
\begin{align}
\rank_{\Z_p[[\cH]]}(\Sel_{\cT^+}(A/K_{\infty})^{\vee})
& = \lambda(\Sel_{\cT^+}(A/k_{\infty})^{\vee})
	- \lambda(H^0(k_{\infty}, A)^{\vee})
	- \delta_{\cT^+}(T/k_{\infty})\\
& \qquad \quad + \sum_{w} \lambda(H^0(k_{\infty, w}, A_v^-)^{\vee}),
\end{align}
where $w$ runs over the finite primes of $k_{\infty}$ (not $K_{\infty}$) that does not split completely in $K_{\infty}/k_{\infty}$.
\end{thm}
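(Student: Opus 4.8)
The plan is to transpose the proof of Theorem~\ref{thm:main_ord} in \S\ref{ss:pf_ord_thm} to the noncommutative setting, using Theorem~\ref{thm:Kida_algA} in place of Theorem~\ref{thm:Kida_alg} and Proposition~\ref{prop:alg1A} in place of Proposition~\ref{prop:alg1}. First I would form the Selmer complex $C$ over $\cR_{K_\infty} = \Z_p[[\cG]]$ exactly as in \S\ref{ss:Sel_cpx}, from the global and local Iwasawa cohomology complexes. Properties (i)--(iv) of \S\ref{ss:Sel_cpx} all persist: the perfectness of $C$ over $\Z_p[[\cG]]$ and the base-change identity $\ol C = \Z_p[[\Gamma]] \otimesL_{\Z_p[[\cG]]} C \simeq (\text{the Selmer complex attached to }k_\infty)$ follow from Fukaya--Kato \cite[Proposition~1.6.5]{FK06}, which already treats the noncommutative case; the vanishing of the Euler characteristic of $C$ follows from the Euler--Poincar\'e formulas together with Assumption~\ref{ass:Sel_rk}; and the global duality triangle is formal. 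Moreover, since $k_\infty \subseteq K_\infty$, Assumption~\ref{ass:spl_fin} guarantees that no finite prime of $k$ splits completely in $K_\infty/k$ either, so the vanishing $H^0_{\Iw}(k_S/K_\infty, T) = 0$ and $H^0_{\Iw}(K_{\infty, v}, T_v^-) = 0$ ($v \in S_f$) still holds; consequently the analogues of Proposition~\ref{prop:C_coh} (the exact sequences \eqref{eq:Sel2} and \eqref{eq:Sel3}) and of Lemma~\ref{lem:main_pf1} go through verbatim, the latter being also available from Nekov\'{a}\v{r} \cite[Proposition~8.11.5, Corollary~9.1.7]{Nek06}.

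Next I would prove the equivalence. By Theorem~\ref{thm:Kida_algA}, ``all cohomology groups of $C$ are finitely generated over $\Z_p[[\cH]]$'' is equivalent to ``$\mu = 0$ for $\ol C$'', and the latter is equivalent to $\mu(\Sel_{\cT^+}(A/k_\infty)^\vee) = 0$ by the same chain of implications as (b$'$), (c$'$) in \S\ref{ss:pf_ord_thm}, applied to $\ol C$. It remains to see that the cohomology groups of $C$ are all finitely generated over $\Z_p[[\cH]]$ precisely when $\Sel_{\cT^+}(A/K_\infty)^\vee$ is. Here $H^0(C) = 0$; $H^3(C)$ is a quotient of $H^0(K_\infty, A)^\vee$, which is finitely generated over $\Z_p$; and once $H^2(C)$ is finitely generated over $\Z_p[[\cH]]$, both $H^2(C)$ and $H^3(C)$ are $\Z_p[[\cG]]$-torsion (a module finitely generated over $\Z_p[[\cH]]$ is $\fS$-torsion, hence $\Z_p[[\cG]]$-torsion), so the vanishing of the Euler characteristic forces $H^1(C)$ to be $\Z_p[[\cG]]$-torsion, whence $H^1(C) \cong H^1(C)_{\tors}$ embeds into $H^0(K_\infty, T)$ by Lemma~\ref{lem:main_pf1} and is finitely generated over $\Z_p$. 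Thus ``all $H^i(C)$ finitely generated over $\Z_p[[\cH]]$'' $\Leftrightarrow$ ``$H^2(C)$ finitely generated over $\Z_p[[\cH]]$'', and this last is equivalent, via the five-term sequence \eqref{eq:Sel3}, to $\Sel_{\cT^+}(A/K_\infty)^\vee$ being finitely generated over $\Z_p[[\cH]]$, because the remaining terms of \eqref{eq:Sel3}---namely $\bigoplus_{v \in S_f} H^0(K_{\infty, v}, A_v^-)^\vee$, $H^0(K_\infty, A)^\vee$, and $H^3(C)$---are all finitely generated over $\Z_p[[\cH]]$ (for the first, each prime $v$ has only finitely many primes above it in $k_\infty$ by Assumption~\ref{ass:spl_fin}, and induction from a closed subgroup of $\cH$ preserves finite generation).

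Assuming these conditions, I would compute $\rank_{\Z_p[[\cH]]}$. Theorem~\ref{thm:Kida_algA} gives $\rank_{\Z_p[[\cH]]}(C) = \lambda(\ol C)$, and Proposition~\ref{prop:C_coh} applied to $\ol C$ evaluates the right-hand side as $\lambda(\Sel_{\cT^+}(A/k_\infty)^\vee) - \lambda(H^0(k_\infty, A)^\vee) + \sum_{v \in S_f} \lambda(H^0(k_{\infty, v}, A_v^-)^\vee) - \delta_{\cT^+}(T/k_\infty)$. On the other hand, \eqref{eq:Sel2}, \eqref{eq:Sel3} and Lemma~\ref{lem:main_pf1}, together with the additivity of $\rank_{\Z_p[[\cH]]}$, give $\rank_{\Z_p[[\cH]]}(C) = \rank_{\Z_p[[\cH]]}(\Sel_{\cT^+}(A/K_\infty)^\vee) + \sum_{v \in S_f} \rank_{\Z_p[[\cH]]}(H^0(K_{\infty, v}, A_v^-)^\vee)$, since $H^0(K_\infty, A)^\vee$ and $H^1(C)$, being finitely generated over $\Z_p$, have vanishing $\Z_p[[\cH]]$-rank (by \eqref{eq:rank_H}, since the Euler characteristic of a finitely generated $\Z_p$-module over the infinite $p$-adic Lie group $\cH$ vanishes; recall $\dim \cH \geq 1$ as $\dim \cG \geq 2$). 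Equating the two expressions reduces the theorem to the prime-by-prime evaluation of $\rank_{\Z_p[[\cH]]}(H^0(K_{\infty, v}, A_v^-)^\vee)$.

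The hard part, and the only genuinely new input, is this last local computation and the cancellation it produces. For a finite prime $w$ of $k_\infty$ above $v$, the part of $H^0(K_{\infty, v}, A_v^-)^\vee$ supported at the primes of $K_\infty$ over $w$ is, as a $\Z_p[[\cH]]$-module, induced from the decomposition subgroup $\cH_w \subseteq \cH$ of a module that is finitely generated over $\Z_p$. If $w$ does not split completely in $K_\infty/k_\infty$, then $\cH_w$ is a nontrivial closed subgroup of the $p$-torsion-free group $\cH$, hence infinite, and this part has $\Z_p[[\cH]]$-rank $0$; if $w$ splits completely, then $\cH_w = 1$, the part is $\Z_p[[\cH]] \otimes_{\Z_p} H^0(k_{\infty, w}, A_v^-)^\vee$, and it contributes $\lambda(H^0(k_{\infty, w}, A_v^-)^\vee)$. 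Since the term $\sum_{v \in S_f} \lambda(H^0(k_{\infty, v}, A_v^-)^\vee)$ occurring in $\lambda(\ol C)$ runs over \emph{all} finite primes $w$ of $k_\infty$, the contributions of the completely split primes cancel between the two sides of $\rank_{\Z_p[[\cH]]}(C) = \lambda(\ol C)$, and what survives is exactly $\sum_w \lambda(H^0(k_{\infty, w}, A_v^-)^\vee)$ over the finite primes $w$ of $k_\infty$ that do not split completely in $K_\infty/k_\infty$---which is the asserted formula. The bookkeeping with the possibly infinitely many primes of $K_\infty$ above a given place, and this cancellation, are the points requiring care beyond the proof of Theorem~\ref{thm:main_ord}.
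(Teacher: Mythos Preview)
Your proposal is correct and follows essentially the same route as the paper: form the Selmer complex over $\Z_p[[\cG]]$, invoke Theorem~\ref{thm:Kida_algA} to get $\rank_{\Z_p[[\cH]]}(C) = \lambda(\ol C)$, expand both sides via Proposition~\ref{prop:C_coh} and Lemma~\ref{lem:main_pf1}, and then use $\dim \cH \geq 1$ to kill the global terms and sort the local terms according to whether the decomposition group in $\cH$ is trivial. The paper's own proof is a terse sketch of exactly this, so your added detail on the equivalence step and on the induction-from-decomposition-group argument for the local terms is all in the intended spirit; the only imprecision is the phrase ``runs over all finite primes $w$ of $k_\infty$'' for the sum in $\lambda(\ol C)$, which literally runs over primes above $S_f$---but since $S \supset S_p(k) \cup S_{\ram}(K_\infty/k)$, every prime of $k_\infty$ not splitting completely in $K_\infty/k_\infty$ lies above $S_f$, so the final formula is unaffected.
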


\begin{proof}
This theorem is proved in the same way as Theorem \ref{thm:main_ord}.
We only discuss how to obtain the formula on the ranks.
Assuming $\mu = 0$, a direct application of Theorem \ref{thm:Kida_algA} to the appropriate Selmer complex implies
\begin{align}
& \rank_{\Z_p[[\cH]]}(\Sel_{\cT^+}(A/K_{\infty})^{\vee})
	- \rank_{\Z_p[[\cH]]}(H^0(K_{\infty}, A)^{\vee})
	- \delta_{\cT^+}(T/K_{\infty})\\
& = \lambda(\Sel_{\cT^+}(A/k_{\infty})^{\vee})
	- \lambda(H^0(k_{\infty}, A)^{\vee})
	- \delta_{\cT^+}(T/k_{\infty})\\
& \quad + \sum_{v \in S_f} \left[ \lambda(H^0(k_{\infty, v}, A_v^-)^{\vee}) - \rank_{\Z_p[[\cH]]}(H^0(K_{\infty, v}, A_v^-)^{\vee}) \right]
\end{align}
(the meaning of $\delta_{\cT^+}(T/K_{\infty})$ is clear) with $S$ sufficiently large.
Because of $\dim \cH \geq 1$, both $\rank_{\Z_p[[\cH]]}(H^0(K_{\infty}, A)^{\vee})$ and $\delta_{\cT^+}(T/K_{\infty})$ vanish.
For a finite prime $w$ of $k_{\infty}$, if $w$ splits completely in $K_{\infty}/k_{\infty}$, then we have $\lambda(H^0(k_{\infty, w}, A_v^-)^{\vee}) = \rank_{\Z_p[[\cH]]}(H^0(K_{\infty, w}, A_v^-)^{\vee})$.
Otherwise, since the decomposition subgroup in $\cH$ is infinite, we have $\rank_{\Z_p[[\cH]]}(H^0(K_{\infty, w}, A_v^-)^{\vee}) = 0$.
Thus, we obtain the desired formula.
\end{proof}

Note that, by Assumption \ref{ass:spl_fin}, a non-$p$-adic finite prime $w$ of $k_{\infty}$ splits completely in $K_{\infty}/k_{\infty}$ if and only if it is unramified in $K_{\infty}/k_{\infty}$.

We are able to apply Theorem \ref{thm:main_ordA} to various arithmetic situations as in \S \S \ref{ss:Iw}--\ref{ss:ell}.
Here we only illustrate the results in the situation in \S \ref{ss:Iw2} and omit the others just because we encounter no additional difficulties.

\begin{eg}
We consider the situation in \S \ref{ss:Iw2}, but we assume that $K_{\infty}/k_{\infty}$ is an infinite extension such that $\cH = \Gal(K_{\infty}/k_{\infty})$ is a non-trivial, pro-$p$, $p$-adic Lie group without $p$-torsion.
Then as a variant of Theorem \ref{thm:main_CM}, we obtain the equivalence for $\mu = 0$ and a formula
\[
\rank_{\Z_p[[\cH]]}(X'(K_{\infty})^-) 
= \lambda(X'(k_{\infty})^-) - \delta + \# Q'_{K_{\infty}/k_{\infty}},
\]
where $Q'_{K_{\infty}/k_{\infty}}$ denotes the set of finite primes of $k_{\infty}^+$ that does not split completely in $K_{\infty}^+/k_{\infty}^+$ and splits in $k_{\infty}/k_{\infty}^+$.

As in Theorem \ref{thm:Kida}, from this we can also deduce a formula
\[
\rank_{\Z_p[[\cH]]}(X(K_{\infty})^-) 
= \lambda(X(k_{\infty})^-) - \delta + \# Q_{K_{\infty}/k_{\infty}},
\]
where $Q_{K_{\infty}/k_{\infty}}$ denotes the set of non-$p$-adic primes in $Q'_{K_{\infty}/k_{\infty}}$.
This formula is nothing but a theorem of Hachimori--Sharifi \cite[Theorem 1.2]{HS05}.
\end{eg}

\begin{rem}
Bhave \cite[\S 4]{Bha07} obtained another kind of Kida's formula for finite extensions between $p$-adic Lie extensions (i.e., the behavior of the ranks when $K_{\infty}$ in this section varies along finite extensions).
We can obtain such a formula by modifying our method in this paper or, alternatively, by combining Theorems \ref{thm:main_ord} and \ref{thm:main_ordA} themselves.
\end{rem}

{
\bibliographystyle{abbrv}
\bibliography{biblio}
}

\end{document}